\numberwithin{equation}{section}
\numberwithin{figure}{section}
\theoremstyle{definition}
\newtheorem{thm}{\protect\theoremname}[section]
  \theoremstyle{definition}
  \theoremstyle{definition}
  \newtheorem{defn}[thm]{\protect\definitionname}
  \theoremstyle{definition}
  \newtheorem{lem}[thm]{\protect\lemmaname}
  \theoremstyle{remark}
  \newtheorem{rem}[thm]{\protect\remarkname}
  \theoremstyle{definition}
  \newtheorem{notation}[thm]{\protect\notationname}
  \theoremstyle{definition}
  \theoremstyle{definition}
  \newtheorem*{thm*}{\protect\theoremname}
  \theoremstyle{definition}
  \newtheorem{prop}[thm]{\protect\propositionname}
\numberwithin{equation}{section}
\numberwithin{figure}{section}
 \let\footnote=\endnote
\theoremstyle{definition}
\newtheorem{thmx}{Theorem}
\def\a{\alpha}
\def\d{\delta}
\def\s{\sigma}
\def\k{\kappa}
\def\s{\sigma}
\def\g{\gamma}
\def\w{\mathsf{w}}
\def\R{\mathbb{R}}
\def\N{\mathbb{N}}
\def\Z{\mathbb{Z}}
\def\ep{\varepsilon}
\address{KIAS,
85 Hoegiro, Dongdaemun-gu,
Seoul, 02455,
Republic of Korea} 
\email{kihopark12@kias.re.kr}
\newcommand{\hol}{H\"older }
\newcommand{\vp}{\varphi}
\def\A{\mathcal{A}}
\def\AA{\mathsf{A}}
\def\B{\mathcal{B}}
\def\BB{\mathsf{B}}
\def\g{\gamma}
\def\gsf{\mathsf{g}}
\def\E{\mathcal{E}}
\def\P{\mathbb{P}}
\def\PP{\mathsf{P}}
\def\R{\mathbb{R}}
\def\RR{\mathsf{R}}
\def\CC{\mathcal{C}}
\def\W{\mathcal{W}}
\def\M{\mathcal{M}}
\def\U{\mathbb{U}}
\def\UU{\mathsf{U}}
\def\V{\mathbb{V}}
\def\v{\mathsf{v}}
\def\u{\mathsf{u}}
\def\vv{\mathsf{V}}
\def\WW{\mathbb{W}}
\def\ww{\mathsf{W}}
\def\HH{\mathsf{H}}
\def\id{\text{id}}
\def\eig{\text{eig}}
\newcommand{\wt}{\widetilde}
\newcommand{\ol}{\overline}
\def\ep{\varepsilon}
\def\mangle{\measuredangle}
\newcommand{\Wloc}{\mathcal{W}_{\text{loc}}}
\newcommand{\Sig}{\Sigma_T}
\newcommand{\glr}{\text{GL}_d(\R)}
  \providecommand{\corollaryname}{Corollary}
  \providecommand{\definitionname}{Definition}
  \providecommand{\factname}{Fact}
  \providecommand{\lemmaname}{Lemma}
  \providecommand{\propositionname}{Proposition}
  \providecommand{\remarkname}{Remark}
  \providecommand{\notationname}{Notation}
  \providecommand{\theoremname}{Theorem}
\providecommand{\theoremname}{Theorem}
\begin{document}

\sloppy
\title{Construction and applications of proximal maps for typical cocycles}
\author{Kiho Park}
\date{\today}
\begin{abstract}
For typical cocycles over subshifts of finite type, we show that for any given orbit segment, we can construct a periodic orbit such that it shadows the given orbit segment and that the product of the cocycle along its orbit is a proximal linear map. Using this result, we show that suitable assumptions on the periodic orbits have consequences over the entire subshift. 
\end{abstract}
\maketitle

\section{Introduction}

Given a continuous dynamical system $f \colon X \to X$, a \textit{matrix cocycle} is a continuous map $\A \colon X \to \glr$. For $x\in X$ and $n\in \N$, the product of $\A$ along the orbit of $x$ for time $n$ is denoted by
$$\A^n(x) :=\A(f^{n-1}x) \ldots\A(x).$$
Throughout the paper, our base dynamical system will be a mixing subshift of finite type $\s \colon \Sig \to \Sig$, a prototypical model for uniformly hyperbolic systems when equipped with the standard metric. 
In this paper, we will establish a useful property (Theorem \ref{thm: simultaneous}) for a class of cocycles, and derive some applications from it.

We will mainly focus on a subset of \hol continuous cocycles that are nearly conformal; we call them the \textit{fiber-bunched cocycles}. 
Typical cocycles are fiber-bunched cocycles with extra assumptions on some periodic point $p\in \Sig$ and one of its homoclinic points $z \in \Sig$; see Definition \ref{defn: typical} for the formal definition. We call this pair $(p,z)$ a \textit{typical pair}, and the typicality assumptions on the pair are suitable generalizations of the proximality and strong irreducibility in the setting of random product of matrices. Typicality has first been introduced by Bonatti and Viana \cite{bonatti2004lyapunov} to establish the simplicity of the Lyapunov exponents for any invariant measure with continuous local product structure. Later, it was shown that typical cocycles possess other interesting properties, including the uniqueness of the equilibrium states for the singular value potentials \cite{park2020quasi} and the validity of various limit laws \cite{park2020transfer, duarte2021h}.  The following theorem is another interesting property that can be derived from the typicality assumption. 

Recall that the \textit{singular values} of $g \in \glr$ are the square root of the eigenvalues of $gg^*$; we denote them by $\a_1(g) \geq \ldots \geq  \a_d(g)> 0$. Moreover, we denote its eigenvalues by $\{\eig_i(g)\}_{1 \leq i \leq d}$ so that $|\eig_i(g)| \geq  |\eig_{i+1}(g)|$ for all $1 \leq i \leq d-1$.
We then set
\begin{equation}\label{eq: notation for sing and eigen}
\vec{\mu}(g) := \big(\log\a_1(g),\ldots,\log\a_d(g)\big) \text{ and }\vec{\chi}(g):=\big(\log|\eig_1(g)|,\ldots,\log|\eig_d(g)|\big).
\end{equation}
\begin{thmx}\label{thm: simultaneous}
Let $\A \colon \Sig\to \glr$ be a typical cocycle. Then there exist $C>0$ and $k\in \N$ with the following property: for any $ x\in \Sig$ and $n\in \N$, there exists a periodic point $q \in \Sig$ of period $n_q\in [n,n+k]$ such that 
\begin{equation}\label{eq: singular and eigen}
\Big\|\vec{\mu}(\A^n(x)) - \vec{\chi}(\A^{n_q}(q))\Big\| \leq C.
\end{equation}
\end{thmx}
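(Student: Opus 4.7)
The plan is to build the periodic orbit $q$ by concatenating the orbit segment of $x$ with a bounded-length closing word chosen carefully using the typical pair $(p,z)$, and then to analyze the resulting product $\A^{n_q}(q)$ via standard fiber-bunched comparison.

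\textbf{Step 1: closing and comparison.} The specification property of the mixing SFT $\sigma\colon\Sig\to\Sig$ yields a constant $k_0$ (depending only on the transition matrix $T$) such that for any $x$ and any $n$ one can produce a periodic point $q$ of period $n_q\in[n,n+k_0]$ whose first $n$ coordinates coincide with those of $x$. \hol continuity combined with fiber-bunching supplies canonical stable and unstable holonomies of uniformly bounded norm; using these to relate $\A^n(q)$ to $\A^n(x)$ and folding in the product $\A^m(\sigma^n q)$ along the closing word $w$ of length $m:=n_q-n$, one obtains the approximate factorization
\[
\A^{n_q}(q)\;\approx\;M_w\,\A^n(x)
\]
up to a multiplicative error decaying exponentially in $n$, where $M_w\in\glr$ has operator norm and co-norm bounded by a constant depending only on $\A$ and $k_0$, and whose precise value is governed by the choice of $w$.

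\textbf{Step 2: reduction to a general-position condition.} Since $M_w^{\pm 1}$ is uniformly bounded, the inequality $\|\vec\mu(\A^{n_q}(q))-\vec\mu(\A^n(x))\|\le C_1$ is automatic. The real content is to show that a suitable $w$ also makes $\vec\chi(\A^{n_q}(q))$ close to $\vec\mu(\A^n(x))$. Writing the singular value decomposition $\A^n(x)=U\,\SI\,V^*$ with $\SI=\mathrm{diag}(\a_1,\ldots,\a_d)$, the eigenvalues of $M_w\A^n(x)$ coincide with those of $N\SI$ where $N:=V^*M_w U$. A direct linear algebra computation (a reverse form of Horn's inequalities, with the size of the leading blocks dictated by the gap structure of $\SI$) shows that if every leading principal minor of $N$ of the appropriate size has modulus bounded below by a uniform positive constant $\delta$, then $|\eig_j(N\SI)|$ is comparable to $\a_j$ for every $j$, yielding \eqref{eq: singular and eigen}. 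It therefore suffices to select $w$ of bounded length so that $N=V^*M_w U$ satisfies this general-position condition uniformly in $x$ and $n$.

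\textbf{Step 3: the twist from typicality.} The typicality of $(p,z)$ provides a bounded family $\mathcal B$ of matrices, generated by $\A^{n_p}(p)$ and the homoclinic holonomy loop through $z$, whose action on the flag variety of $\R^d$ does not preserve any proper subvariety. Using specification to prepend a bounded-length transition from $\sigma^n x$ to $p$ and to append a bounded-length return from $p$ to $x_0$, I can force $w$ to contain any desired finite product from $\mathcal B$. A compactness argument over the pair of flag variables $(U,V)$ then produces a uniform upper bound on the length of the insertion needed to achieve the general-position condition, and this length, combined with $k_0$, yields the constants $k$ and $C$ in the conclusion.

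\textbf{Main obstacle.} The delicate point is Step 3: a single closing word of length bounded uniformly in $x$ and $n$ must simultaneously force all of the leading principal minors of $V^*M_w U$ to be bounded below, even though $(U,V)$ can be arbitrary in a nontrivial compact flag variety. Proximality of one cocycle matrix controls only the top minor; the full Bonatti--Viana twist content of typicality --- namely that no proper partial flag structure of $\A^{n_p}(p)$ is preserved by the homoclinic holonomy --- is exactly the ingredient that turns the orbit of $\mathcal B$ into a dense subset of each flag stratum and thereby makes the flag-variety compactness argument succeed with a length bound independent of $(U,V)$.
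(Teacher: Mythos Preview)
Your outline has the right architecture: close the orbit, write $\A^{n_q}(q)$ as (a conjugate of) $M_w\A^n(x)$ for a bounded $M_w$, and choose the closing word so that the eigenvalues of the product track the singular values of $\A^n(x)$. One remark on Step~1: the relation $\A^{n_q}(q)=L\,\A^n(x)\,R$ with bounded holonomy-type factors $L,R$ is \emph{exact}, not approximate with exponentially decaying error; since eigenvalues are conjugacy invariant you may cyclically rearrange to $RL\,\A^n(x)$ and set $M_w:=RL$, which is what you actually need.

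The paper executes Step~2 differently but equivalently: rather than a reverse-Horn estimate on the principal minors of $N$, it arranges that $(\A^{\wedge t})^{n_q}(q)$ is $\varepsilon$-proximal for every $1\le t\le d-1$ simultaneously (Theorem~\ref{thm: construct proximal general}), applies Benoist's bound $|\mu_1-\chi_1|\le C(\varepsilon)$ (Proposition~\ref{prop: property of proximal map}) to each exterior power, and subtracts consecutive $t$'s. Your principal-minor condition on $N=V^*M_wU$ is equivalent to this simultaneous proximality, so the difference here is largely cosmetic.

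The genuine gap is Step~3. You assert that the semigroup generated by $P$ and $\psi_z$ acts on the full flag variety with no proper invariant subvariety, and then run compactness over $(U,V)$. But that assertion is not justified, and it does not follow formally from typicality as defined here: typicality is a combinatorial condition on how $\psi_z$ moves the eigenlines of $P$ inside each $\wedge^t\R^d$, not a Zariski-density statement about $\langle P,\psi_z\rangle$ (for instance, pinching does not force the eigenvalues of $P$ to be multiplicatively independent, so already the Zariski closure of $\langle P\rangle$ need not be the full diagonal torus). The paper does not attempt a soft density/compactness argument; it proves the needed statement --- simultaneous transversality, Theorem~\ref{thm: simult transverse} --- constructively: a common power of $P$ drives an arbitrary direction in each $\P(\wedge^t\R^d)$ near some eigendirection simultaneously in $t$ (Lemma~\ref{lem: simult turn}), the loop $\psi_z$ twists it off every eigenhyperplane, further powers of $P$ push it to the attractor, and the inverse cocycle handles the hyperplane side. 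That explicit mechanism, together with the Abels--Margulis--Soifer control on the $\rho$-norm (Proposition~\ref{prop: AMS}), is the heart of the proof; your compactness step presupposes a statement of equivalent strength without supplying it.
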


In the proof this theorem, which appears in Section \ref{sec: proof other thms}, we will see that a segment of the constructed periodic orbit $q$ shadows the forward orbit of $x$ upto time $n$ and the product $\A^{n_q}(q)$ is a proximal linear map. In such a point of view, this theorem resembles a result of Benoist \cite{benoist1997proprietes} (see also \cite{abels1995semigroups}) which states that for a reductive semi-group $\Gamma \subset \glr$, there exist a \textit{finite} subset $S\subset \Gamma$ and a constant $C>0$ such that for any $\g\in \Gamma$, there exists $s\in S$ satisfying 
$$ \Big\|\vec{\mu}(\g) - \vec{\chi}(\g s)\Big\| \leq C.$$
We will construct the required periodic point $q\in \Sig$ such a way that the exterior product cocycles of $\A^{n_q}(q)$ are all proximal, which will then allow us to compare its singular values and the eigenvalues. 
Similar ideas have repeatedly appeared in the literature for various purposes; see \cite{abels1995semigroups, benoist1997proprietes, sert2019large, kassel2020eigenvalue, breuillard2021joint} for instance.

Theorem \ref{thm: simultaneous} has many applications. 
Let $\M(\s)$ be the set of invariant probability measures  of $\s$ and $\E(\s)$ the set of ergodic measures of $\s$. 
For any $\nu \in \M(\s)$, we denote its Lyapunov exponents in a non-increasing order by $\{\lambda_i(\A,\nu)\}_{1 \leq i \leq d}$ and set
$$\vec{\lambda}(\nu):=\vec{\lambda}(\A,\nu)=(\lambda_1(\A,\nu),\ldots, \lambda_d(\A,\nu)).$$ For a periodic point $q\in \Sig$ of period $n_q\in\N$, its $i$-th Lyapunov exponent $\lambda_i(q):=\lambda_i(\A,q)$ is equal to $\displaystyle \frac{1}{n_q} \log\big|\eig_i(\A^{n_q}(q))\big|$. We then define
\begin{equation}\label{eq: Lyap periodic point}
\vec{\lambda}(q) := (\lambda_1(q),\ldots,\lambda_d(q)) = \frac{1}{n_q}  \vec{\chi}(\A^{n_q}(q)).
\end{equation}

The first application of Theorem \ref{thm: simultaneous} is that for typical cocycles, a uniform gap in the Lyapunov exponents over the periodic points implies the existence of a dominated splitting over the entire subshift $\Sig$ (see Definition \ref{defn: domination}):
\begin{thmx}\label{thm: main}
Let $\A \colon \Sig\to \glr$ be a typical cocycle. For some $1 \leq i \leq d-1$, suppose that there exists $c>0$ such that 
\begin{equation}\label{eq: assumption main}
\lambda_i(p)-\lambda_{i+1}(p)\geq c
\end{equation}
for every periodic point $p \in\Sig$.
Then $\A$ has the $i$-dominated splitting.  
\end{thmx}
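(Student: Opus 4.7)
The plan is to reduce Theorem \ref{thm: main} directly to Theorem \ref{thm: simultaneous} via the standard (Bochi--Gourmelon) characterization of dominated splitting: $\A$ admits an $i$-dominated splitting if and only if there exist constants $C_0,\lambda_0>0$ such that
\[
\log \a_i(\A^n(x)) - \log \a_{i+1}(\A^n(x)) \geq \lambda_0\, n - \log C_0
\]
for every $x \in \Sig$ and every $n \in \N$. The hypothesis supplies a uniform spectral gap only at periodic orbits, so our task is to promote this periodic-orbit gap to a uniform singular-value gap along arbitrary orbits; this transfer is exactly what Theorem \ref{thm: simultaneous} makes possible.

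Concretely, fix arbitrary $x \in \Sig$ and $n \in \N$ and apply Theorem \ref{thm: simultaneous} to produce a periodic point $q$ of period $n_q \in [n, n+k]$ with
\[
\big\|\vec{\mu}(\A^n(x)) - \vec{\chi}(\A^{n_q}(q))\big\| \leq C.
\]
By equivalence of norms on $\R^d$, this controls the individual coordinate differences by some constant $C'$ independent of $x$ and $n$; in particular,
\[
\big|\log \a_j(\A^n(x)) - \log |\eig_j(\A^{n_q}(q))|\big| \leq C' \qquad \text{for } j = i, i+1.
\]
Combining these two bounds with the identity $\log|\eig_j(\A^{n_q}(q))| = n_q\,\lambda_j(q)$ from (\ref{eq: Lyap periodic point}) and the hypothesis $\lambda_i(q)-\lambda_{i+1}(q)\geq c$ gives
\[
\log \a_i(\A^n(x)) - \log \a_{i+1}(\A^n(x)) \geq n_q\big(\lambda_i(q)-\lambda_{i+1}(q)\big) - 2C' \geq c\,n - 2C'.
\]
Since $x$ and $n$ were arbitrary, this is the desired uniform exponential singular-value gap with rate $c$, and the $i$-dominated splitting follows from the characterization above.

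The substantive content is entirely absorbed into Theorem \ref{thm: simultaneous}; once that is in hand, the implication to Theorem \ref{thm: main} is essentially an immediate two-line computation. The only auxiliary ingredient is the Bochi--Gourmelon characterization of $i$-domination by uniform exponential growth of the ratio $\a_i/\a_{i+1}$, which is standard for \fb cocycles over subshifts of finite type. Accordingly I do not anticipate a substantial obstacle once Theorem \ref{thm: simultaneous} is established; the main delicacy of the overall argument lies in the construction of the proximal periodic approximations, not in the present corollary-style deduction.
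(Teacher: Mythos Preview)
Your proposal is correct and follows essentially the same route as the paper: reduce to the Bochi--Gourmelon singular-value criterion, invoke Theorem~\ref{thm: simultaneous} to produce the periodic point $q$, and combine the coordinatewise bound with $n_q\geq n$ and the hypothesis $\lambda_i(q)-\lambda_{i+1}(q)\geq c$ to obtain $(\mu_i-\mu_{i+1})(\A^n(x))\geq cn-2C'$. The only cosmetic difference is your detour through ``equivalence of norms'' (the paper works directly with the Euclidean norm, for which each coordinate difference is already bounded by $C$), and note that the Bochi--Gourmelon criterion holds for arbitrary continuous cocycles over compact spaces, not only fiber-bunched ones.
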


This theorem partly generalizes recent results of Kassel and Potrie \cite[Theorem 3.1]{kassel2020eigenvalue} for locally constant cocycles and Velozo \cite[Corollary 1]{velozo2020characterization} for fiber-bunched $\text{GL}_2(\R)$-cocycles. 
While both results hold without further assumptions (such as the typicality) on the cocycle, the methods there do not readily generalize to fiber-bunched $\glr$-cocycles with $d\geq 3$. In our case, the proof of Theorem \ref{thm: main} makes uses of Theorem \ref{thm: simultaneous} which heavily relies on the typicality assumption. 

Theorem \ref{thm: simultaneous} has applications in thermodynamic formalism as well. Given two \hol continuous potential $\vp,\psi \colon \Sig \to \R$, their (unique) equilibrium states coincide if and only if there exists a constant $c\in \R$ such that the Birkhoff sum of the function $\vp-\psi-c$ vanishes on all periodic orbits; see \cite[Theorem 1.28]{bowen1975ergodic}. This raises the question whether the same can be said for matrix cocycles in the context of subadditive thermodynamic formalism, and we provide a partial answer to such a question in Theorem \ref{thm: subadditive} below.

Given a 1-typical (a weaker version of typicality; see Definition \ref{defn: typical}) cocycle $\A \colon \Sig \to \glr$, the author \cite{park2020quasi} showed that the norm potential $\Phi_\A$ has a unique equilibrium state $\mu_\A \in \E(\s)$ and that $\mu_\A$ has the subadditive Gibbs property; see Subsection \ref{subsec: subadditive thermo} for more details. For another 1-typical cocycle $\B$ whose unique equilibrium state $\mu_\B$ coincides with $\mu_\A$, it easily follows from the Gibbs property that there exists a constant $c\in \R$ (see Subsection \ref{subsec: subadditive thermo}) such that for every periodic point $p\in \Sig$,
$$\lambda_1(\A,p) - \lambda_1(\B,p) =c.$$
In fact, the constant $c$ is equal to $P(\Phi_\A) - P(\Phi_\B)$ where $P(\Phi)$ denotes the subadditive pressure of $\Phi$. The following theorem uses Theorem \ref{thm: simultaneous} to show that the converse statement holds when $\B$ is a small perturbation of $\A$:

\begin{thmx}\label{thm: subadditive} Let $\A \colon \Sig \to \glr$ be a 1-typical cocycle and $\B\colon \Sig \to \glr$ a small perturbation of $\A$. If there exists $c\in \R$ such that
\begin{equation}\label{eq: A,B}
\lambda_1(\A,p) - \lambda_1(\B,p) =c
\end{equation}
for every periodic point $p\in \Sig$, then 
$\mu_\A$ coincides with $\mu_\B$.
\end{thmx}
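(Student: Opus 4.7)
The plan is to establish, for every invariant measure $\nu \in \M(\s)$, the Lyapunov identity
\begin{equation*}
\lambda_1(\A,\nu) - \lambda_1(\B,\nu) = c,
\end{equation*}
and then derive $\mu_\A = \mu_\B$ from the subadditive variational principle. Once the identity holds, the functionals $\nu \mapsto h(\nu) + \lambda_1(\A,\nu)$ and $\nu \mapsto h(\nu) + \lambda_1(\B,\nu)$ differ by the additive constant $-c$, so they share the same unique maximizer, which by \cite{park2020quasi} is precisely the unique equilibrium state of each of $\Phi_\A$ and $\Phi_\B$; uniqueness then forces $\mu_\A = \mu_\B$.

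The key step is thus to upgrade the identity \eqref{eq: A,B} from periodic points to all ergodic $\nu$. Fix an ergodic $\nu$ and a $\nu$-generic $x \in \Sig$. Apply Theorem~\ref{thm: simultaneous} to $\A$ to obtain periodic points $q_n$ of period $n_{q_n} \in [n, n+k]$ shadowing the orbit of $x$ up to time $n$, with $\big|\log\|\A^n(x)\| - n_{q_n}\lambda_1(\A,q_n)\big| \leq C$. Dividing by $n$ and using Kingman, $\lambda_1(\A, q_n) \to \lambda_1(\A,\nu)$, so the hypothesis \eqref{eq: A,B} gives $\lambda_1(\B, q_n) \to \lambda_1(\A,\nu) - c$. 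Shadowing ensures the periodic measures of $q_n$ converge weak-$*$ to $\nu$, and upper semi-continuity of the top Lyapunov exponent on $\M(\s)$ (which follows from writing $\lambda_1(\A,\nu) = \inf_n \frac{1}{n}\int\log\|\A^n\|\,d\nu$ as an infimum of weak-$*$ continuous functions of $\nu$) then yields $\lambda_1(\A,\nu) - c \leq \lambda_1(\B,\nu)$. Since $\B$ is a small perturbation of the 1-typical $\A$ (hence itself 1-typical, so Theorem~\ref{thm: simultaneous} applies to it as well), swapping the roles of $\A$ and $\B$ gives the reverse inequality. Ergodic decomposition extends the equality to all $\nu \in \M(\s)$.

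The main subtlety I anticipate is that Theorem~\ref{thm: simultaneous} is applied to two \emph{different} cocycles in the two directions of the inequality, producing two different sequences of approximating periodic orbits; upper semi-continuity of the top Lyapunov exponent is what bridges them. An alternative, arguably more streamlined route would be to prove a joint refinement of Theorem~\ref{thm: simultaneous} producing a single periodic $q$ for which both $\A^{n_q}(q)$ and $\B^{n_q}(q)$ are proximal — this would upgrade the pointwise-periodic hypothesis \eqref{eq: A,B} to the uniform estimate $|\log\|\A^n(x)\| - \log\|\B^n(x)\| - nc| \leq C'$ valid for all $x$ and $n$, from which $\mu_\A = \mu_\B$ follows directly via the subadditive Gibbs property for $\mu_\A$ and $\mu_\B$. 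Either route should work; the first avoids a joint restatement of Theorem~\ref{thm: simultaneous}.
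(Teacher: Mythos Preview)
Your primary argument is correct, but the paper takes the route you call the ``alternative'': it invokes Theorem~\ref{thm: proximal construction 2} (the general form of Theorem~\ref{thm: model thm}) with $\A_1=\A$ and $\A_2=\B$, which is legitimate because a small perturbation $\B$ of $\A$ is 1-typical with respect to the \emph{same} typical pair $(p,z)$. This yields a \emph{single} periodic point $q$ for which both $\A^{n_q}(q)$ and $\B^{n_q}(q)$ are $\ep$-proximal, and hence simultaneous approximation $\lambda_1(\A,q)\to\lambda_1(\A,\mu)$ and $\lambda_1(\B,q)\to\lambda_1(\B,\mu)$ along the same sequence. The hypothesis then gives $\lambda_1(\A,\mu)-\lambda_1(\B,\mu)=c$ directly, and the paper finishes exactly as you do, via the subadditive variational principle (not via the Gibbs property you sketch at the end). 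Your separate-application route trades the joint proximal construction for the extra ingredients of weak-$*$ convergence of the shadowing periodic measures and upper semi-continuity of $\nu\mapsto\lambda_1(\cdot,\nu)$; both are standard and your two inequalities combine cleanly, so nothing is lost. The paper's route is shorter in that it avoids the semi-continuity step entirely, at the cost of needing Theorem~\ref{thm: proximal construction 2} rather than the single-cocycle Theorem~\ref{thm: model thm}.

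One small labeling point: you cite Theorem~\ref{thm: simultaneous}, which as stated requires full typicality, but you only use its first coordinate $\big|\log\|\A^n(x)\|-n_{q}\lambda_1(\A,q)\big|\le C$. That estimate is exactly what Theorem~\ref{thm: model thm} together with Proposition~\ref{prop: norm and eigen} deliver under 1-typicality alone, so the reference should be to those (or equivalently to Theorem~\ref{thm: proximal construction 2} with $\kappa=1$).
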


For irreducible locally constant cocycles, a similar result was recently established by Morris \cite[Theorem 9]{morris2018ergodic}: given two irreducible locally constant cocycles $\A,\B \colon \Sig \to \glr$, their equilibrium states are equal if and only if there exists a constant $c\in \R$ such that
$$\lambda_1(\A,\nu) -\lambda_1(\B,\nu) = c$$
for \textit{all} $\nu\in \M(\s)$. His proof easily extends to establish the same equivalence for fiber-bunched cocycles.
The content of Theorem \ref{thm: subadditive} is that with an added assumption of 1-typicality, the evidently weaker condition \eqref{eq: A,B} on only the periodic orbits is indeed sufficient for and equivalent to the condition that $\mu_\A$ is equal to $\mu_\B$.
Our proof, which relies on the typicality assumption, is different from that of Morris.

Lastly, Theorem \ref{thm: simultaneous} can also be used to compare different Lyapunov spectrums associated to typical cocycles $\A$. For any $x\in \Sig$ and $1 \leq i \leq d$, we denote the \emph{$i$-Lyapunov exponent of $x$} by $\displaystyle \lambda_i(x):=\lim\limits_{n\to \infty} \frac{1}{n}\log \a_i(\A^n(x))$, if the limit exists. 
If $\lambda_i(x)$ exists for all $1\leq i \leq d$, then we say $x$ is \emph{Lyapunov regular} and define the \emph{pointwise Lyapunov exponent} of $x$ by
$$\vec{\lambda}(x) := \lim\limits_{n\to \infty} \frac{1}{n}\vec{\mu}(\A^n(x))= (\lambda_1(x),\ldots,\lambda_d(x)) \in \R^d.$$
The \emph{pointwise Lyapunov spectrum} is then defined by $$\Omega_\A:= \{\vec{\lambda}(x)  \in \R^d \colon x \in \Sig \text{ is Lyapunov regular}\}.$$ 

While $\Omega_\A$ may be quite irregular for arbitrary cocycles, the author showed in \cite{park2020quasi} that $\Omega_\A$ is a closed and convex subset of $\R^d$ for typical cocycles $\A$ by generalizing the result of Feng \cite{feng2009lyapunov}. In particular, this implies that the closure of the \emph{Lyapunov spectrum over periodic points} defined by 
$$\Omega_p = \{\vec{\lambda}(p)\in \R^d \colon p \in \Sig\text{ is a periodic point}\}$$ is a subset of $\Omega_\A$. Theorem \ref{thm: simultaneous} can be used to establish the reverse inclusion:

\begin{thmx}\label{thm: spectrum} 
Let $\A \colon \Sig\to \glr$ be a typical cocycle. Then the closure $\ol{\Omega_p}$ of the Lyapunov spectrum over periodic points
coincides with the pointwise Lyapunov spectrum $\Omega_\A$.
\end{thmx}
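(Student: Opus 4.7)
The plan is to establish the two inclusions $\overline{\Omega_p}\subseteq \Omega_\A$ and $\Omega_\A\subseteq \overline{\Omega_p}$ separately. The first inclusion is essentially free from what is recalled in the paragraph preceding the statement: every periodic point is Lyapunov regular, so $\Omega_p\subseteq \Omega_\A$, and since $\Omega_\A$ is closed (by the author's earlier result generalizing Feng), taking closure preserves the inclusion. All of the work therefore lies in showing $\Omega_\A\subseteq \overline{\Omega_p}$, and this is precisely where Theorem \ref{thm: simultaneous} should be applied.

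For the nontrivial inclusion, I would fix a Lyapunov regular point $x\in\Sig$ and write $\vec{\lambda}(x)=\lim_{n\to\infty}\frac{1}{n}\vec{\mu}(\A^n(x))$. Applying Theorem \ref{thm: simultaneous} to $x$ for each $n\in\N$ produces a periodic point $q_n$ of period $n_{q_n}\in[n,n+k]$ such that
\begin{equation*}
\bigl\|\vec{\mu}(\A^n(x))-\vec{\chi}(\A^{n_{q_n}}(q_n))\bigr\|\leq C.
\end{equation*}
Using the identity $\vec{\lambda}(q_n)=\frac{1}{n_{q_n}}\vec{\chi}(\A^{n_{q_n}}(q_n))$ from \eqref{eq: Lyap periodic point}, I would estimate
\begin{equation*}
\bigl\|\vec{\lambda}(q_n)-\vec{\lambda}(x)\bigr\| \leq \bigl\|\vec{\lambda}(q_n)-\tfrac{1}{n_{q_n}}\vec{\mu}(\A^n(x))\bigr\| + \bigl\|\tfrac{1}{n_{q_n}}\vec{\mu}(\A^n(x))-\vec{\lambda}(x)\bigr\|.
\end{equation*}
The first term is at most $C/n_{q_n}\leq C/n$ by the conclusion of Theorem \ref{thm: simultaneous}. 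For the second term, writing $\tfrac{1}{n_{q_n}}\vec{\mu}(\A^n(x)) = \tfrac{n}{n_{q_n}}\cdot\tfrac{1}{n}\vec{\mu}(\A^n(x))$ and observing that $n/n_{q_n}\to 1$ since $n_{q_n}\in[n,n+k]$ while $\tfrac{1}{n}\vec{\mu}(\A^n(x))\to \vec{\lambda}(x)$ by Lyapunov regularity, the second term tends to $0$. Hence $\vec{\lambda}(q_n)\to \vec{\lambda}(x)$, giving $\vec{\lambda}(x)\in\overline{\Omega_p}$.

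There is essentially no serious obstacle once Theorem \ref{thm: simultaneous} is granted; the whole statement is a clean corollary obtained by dividing by $n$ and letting $n\to\infty$, since the bounded error $C$ washes out in the Cesaro-type normalization and the slack $k$ in the period becomes negligible. The only subtlety worth mentioning is ensuring that the vectors $\vec{\mu}$ and $\vec{\chi}$ are compared in a compatible order (both are sorted in non-increasing order by our conventions in \eqref{eq: notation for sing and eigen}), so that the norm bound of Theorem \ref{thm: simultaneous} genuinely yields coordinate-wise closeness of the corresponding Lyapunov vectors.
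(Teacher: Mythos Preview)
Your proposal is correct and follows essentially the same approach as the paper: both arguments handle the easy inclusion via closedness of $\Omega_\A$, and for the nontrivial inclusion both apply Theorem~\ref{thm: simultaneous} to a Lyapunov regular point, divide by $n$ (or $n_q$), and use $n_q/n\to 1$ to see that the bounded error $C$ and the slack $k$ in the period become negligible. The only cosmetic difference is that the paper introduces an auxiliary $\delta>0$ and chooses $n$ accordingly, whereas you pass directly to the sequence $(q_n)_{n\in\N}$; the estimates are the same.
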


Kalinin \cite{kalinin2011livvsic} showed that for any \hol continuous cocycle $\A \colon \Sig \to \glr$, the Lyapunov exponent $\vec{\lambda}(\mu)$ of any ergodic measure $\mu \in \E(\s)$ can be approximated by the Lyapunov exponents over periodic points. With an added assumption of typicality, Theorem \ref{thm: spectrum} shows that any $\vec{\a} \in \Omega_\A$ that is not necessarily given by the Lyapunov exponent $\vec{\lambda}(\mu)$ of some ergodic measure $\mu\in \E(\s)$ can also be approximated by the Lyapunov exponents over periodic points; see Subsection \ref{subsec: spectrum} for further discussions.

The paper is organize as follows. In Section \ref{sec: prelim} we introduce the setting and notations, and survey relevant preliminary results. In Section \ref{sec: proximal construction}, we assume 1-typicality and construct for any $x\in \N$ and $n\in \N$ a periodic point $q\in \Sig$ of period $n_q \in \N$ such that the orbit of $q$ shadows the forward orbit of $x$ upto time $n$ and that $\A^{n_q}(q)$ is proximal. In Section \ref{sec: simult proximality}, assuming typicality we extend the construction such that all exterior products of $\A^{n_q}(q)$ are proximal. In Section \ref{sec: proof other thms}, we show that such periodic orbit $q\in \Sig$ verifies the statement of Theorem \ref{thm: simultaneous} and derive from it the remaining theorems.

\subsection*{Acknowledgments}
The author would like to thank Rafael Potrie for suggesting the problem and Amie Wilkinson for helpful discussions.

\section{Preliminaries}\label{sec: prelim}
\subsection{Subshifts of finite type and standing notations}
An \textit{adjacency matrix} $T$ is a square matrix with entries in $\{0,1\}$.
A (two-sided) \textit{subshift of finite type} defined by a $q \times q$ adjacency matrix $T$ is a dynamical system $\s \colon \Sig \to \Sig$ where 
$$\Sig := \{(x_i)_{i \in \Z} \colon x_i \in \{1,2,\ldots,q\} \text{ and }T_{x_i,x_{i+1}} = 1 \text{ for all }i \in \Z\}$$
and $\s$ is the left shift operator on $\Sig$.

We will always assume that the adjacency matrix $T$ is \textit{primitive}, meaning that there exists $M\in \N$ such that all entries of $T^M$ are positive. The primitivity of $T$ is equivalent to the mixing property of the corresponding subshift of finite type $(\Sig,\s)$, and such constant $M$ is called the \textit{mixing rate} of $\Sig$.

We endow $\Sig$ with a metric $d$ defined as follows: for $x = (x_i)_{i \in \Z},y = (y_i)_{i \in \Z} \in \Sig$, we define
$$d(x,y)  := 2^{-k},$$ 
where $k$ is the largest integer such that $x_i = y_i$ for all $|i| < k$. Equipped with such a metric, the shift operator $\s$ becomes a hyperbolic homeomorphism on a compact metric space $\Sig$.

We define the \textit{local stable set} $\Wloc^s(x)$ \textit{of $x \in \Sig$} by 
$$\Wloc^s(x):=\{y \in \Sig \colon x_i = y_i \text{ for all } i \geq 0\}.$$
The \textit{stable set} $\W^s(x)$ \textit{of $x \in \Sig$} then defined by 
$$\W^s(x):=\{y \in \Sig \colon \s^ny \in \Wloc^s(\s^nx) \text{ for some } n\geq 0\}.$$ 
Analogously, the (local) stable set of $\s^{-1}$ is called the \textit{(local) unstable set $\W_{(\text{loc})}^u$ of $\s$}. Explicitly, they are defined as
$$\Wloc^u(x):=\{y \in \Sig \colon x_i = y_i \text{ for all } i\leq 0\}$$
and
$$\W^u(x):=\{y \in \Sig \colon \s^ny \in \Wloc^u(\s^nx) \text{ for some } n \leq 0\}.$$

A \textit{cylinder} containing $x = (x_i)_{i \in \Z}\in \Sig$ of length $n\in \N$ is defined by
$$[x]_n:=\{(y_i)_{i\in \Z} \in \Sig \colon  x_i = y_i \text{ for all }0 \leq i \leq n-1\}.$$ 
For any $x,y\in\Sig$ with $x_0 = y_0$ (ie, $y \in [x]_1$), we define
$$[x,y]:= \Wloc^u(x)\cap \Wloc^s(y).$$ In particular, $[x,y]$ is the unique point in $[x]_1$ whose forward orbit shadows that of $y$ and the backward orbit shadows that of $x$ synchronously.
 
We now introduce notations that will be assumed throughout the paper. 
\begin{notation}\label{not: 1} For any vectors $u,v,w \in \R^d$, we will denote their projectivization by $\u,\v,\w\in \P^{d-1}$, respectively. Similarly for hyperplanes $\U,\V,\WW \subseteq \R^d$, we will denote their projectivization by $\UU,\vv,\ww \subset \P^{d-1}$. Moreover, for linear maps appearing later such as $\A(x)$, $g$, $P$, $H^{s/u}$, $\psi$, $B$, and $R$, we will denote their induced actions on $\P^{d-1}$ by $\AA(x)$, $\gsf$, $\PP$, $\HH^{s/u}$, $\Psi$, $\BB$, and $\RR$, respectively. 
\end{notation}

Let $\rho$ be the angular metric on $\mathbb{P}^{d-1}$:
$$\rho(\u,\v) :=\min\{\mangle(u,v),\mangle(u,-v)\}.$$
Given any set $\mathsf{S} \subseteq \mathbb{P}^{d-1}$ and $\delta>0$, we denote the $\delta$-neighborhood of $\mathsf{S}$ by 
$$\CC(\mathsf{S},\delta):=\{\v\in \mathbb{P}^{d-1} \colon \rho(\v,\mathsf{S})\leq \delta\}.$$
For any $g\in \glr$, we define
$$\|\gsf\|_\rho :=\sup\limits_{\u \neq \v}\frac{\rho(\gsf\u,\gsf\v)}{\rho(\u,\v)}.$$
We call $\|\gsf\|_\rho$ the \emph{Lipschitz seminorm of $\gsf$ with respect to $\rho$} or simply \emph{$\rho$-norm of $\gsf$}. It is clear from the definition that $\|\cdot\|_\rho$ is sub-multiplicative under composition.
Straight from the definition again, we obtain the following identity which we will frequently use: 
\begin{equation}\label{eq: rho and cones}
\gsf(\CC(\v,\delta)) \subseteq \CC(\gsf \v, \delta \|\gsf\|_\rho).
\end{equation}

Denoting by $\|g\| = \a_1(g)$ the operator norm and $m(g):=\|g^{-1}\|^{-1} = \a_d(g)$ the conorm of $g$, we can bound the $\rho$-norm of $\gsf$ by the following expression involving the norm and conorm of $g$ and some uniform constant $C >0$ (see for instance, \cite[Lemma 3.7]{piraino2020weak} together  with \cite[Section 2.1.1]{duarte2016lyapunov}):
\begin{equation}\label{eq: rho and norm}
\|\gsf\|_{\rho} \leq C\cdot \|g\|\cdot m(g)^{-1}.
\end{equation}
This will later be used in bounding the $\rho$-norm of products of matrices drawn from a compact subset (ie, the image of $\A$) of $\glr$ .

\subsection{Fiber-bunched cocycles}
Let $\A \colon \Sig \to \glr$ be a continuous cocycle. Recall from the introduction that $\A^n(x):=\A(\s^{n-1}x)\ldots \A(x)$ denotes the product of $\A$ along the length $n$ orbit of $x$. Notice that such product $\A^n(x)$ satisfies the \emph{cocycle equation}: for any $x\in \Sig$ and $m,n\in \N$,
$$\A^{n+m}(x) = \A^n(\s^m x)\A^m(x).$$
By setting $\A^0 \equiv \id$ and $\A^{-n}(x) = \A^n(\s^{-n}x)^{-1}$ for $n\in \N$, the cocycle equation holds for any $m,n\in \Z$.

In this paper, we will focus on $\a$-\hol continuous cocycles with $\a \in (0,1]$ satisfying an additional assumption called the fiber-bunching.
An $\a$-\hol cocycle $\A\colon \Sig\to \glr$ is \emph{fiber-bunched} if 
$$\|\A(x)\|\cdot \|\A(x)^{-1}\| \cdot (1/2)^{\a} <1$$
for every $x\in \Sig$. 

If $\A$ is conformal, meaning that $\A(x)$ is conformal for every $x\in \Sig$, then it easily follows from the definition that $\A$ is fiber-bunched. Moreover, slight perturbations of conformal cocycles are also fiber-bunched. In fact, fiber-bunched cocycles may as well be thought of as nearly conformal cocycles.

One of the most important properties of fiber-bunched cocycles is that it guarantees the convergence of the canonical holonomies. The \emph{canonical local stable holonomy} is a family of matrices $H^{s}_{x,y}\in \glr$ defined for any $x,y\in \Sig$ with $y \in \Wloc^s(x)$, and it is explicitly given by 
$$H^s_{x,y} :=\lim\limits_{n\to \infty} \A^n(y)^{-1}\A^n(x).$$
The local stable holonomy satisfies the following properties:
\begin{enumerate}
\item $H^s_{x,x} = \id $ and $H^s_{y,z}\circ H^s_{x,y} = H^s_{x,z}$ for any $y,z \in \Wloc^s(x)$,
\item $\A(x) = H^s_{\s y,\s x} \circ \A(y) \circ H^s_{x,y}$,
\item $H^s\colon (x,y) \mapsto H^s_{x,y}$ is continuous as $x,y$ vary continuously while satisfying the relation $y \in \Wloc^s(x)$. 
\end{enumerate}
The \textit{local unstable holonomy} $H^u_{x,y}$ is likewise defined as
$$H^u_{x,y} :=\lim\limits_{n\to -\infty} \A^n(y)^{-1}\A^n(x)$$
for any $y\in \Wloc^u(x)$, and it satisfies analogous properties listed above with $s$ and $\s$ replaced by $u$ and $\s^{-1}$, respectively.

Recalling that $\a\in (0,1]$ is the \hol exponent of $\A$, the canonical holonomies are $\a$-\hol continuous (see \cite{kalinin2013cocycles}): there exists $C>0$ such that for any $y \in \Wloc^{s/u}(x)$,
$$
\|H^{s/u}_{x,y}-\id\| \leq C\cdot d(x,y)^\a.
$$
It then follows that fiber-bunced cocycles have the \emph{bounded distortion property}: there exists $C \geq 1$ such that for any $n \in \N$ and $y \in [x]_n$, we have
$$
C^{-1} \leq \frac{\|\A^n(x)\|}{\|\A^n(y)\|} \leq C.
$$
Indeed, by setting $z:=[x,y]$, we have
\begin{equation}\label{eq: same cylinder}
\A^n(x) = H^u_{\s^n z, \s^n x}H^s_{\s^n y, \s^n z} \A^n(y) H^s_{z,y}H^u_{x,z}
\end{equation}
and the bounded distortion property follows from the \hol continuity of the canonical holonomies.

We conclude this subsection by introducing a notation that will prove useful later.
Throughout the paper, we will have a distinguished periodic point $p\in \Sig$ from the typicality assumption. For any $q \in [p]_1$, we define the \textit{rectangle through $p$ and $q$} as
\begin{equation}\label{eq: R}
R_q := H^u_{[p,q],p}\circ H^s_{q,[p,q]}\circ H^u_{[q,p],q} \circ H^s_{p,[q,p]}.
\end{equation}

Since the canonical holonomies are \hol continuous, $R_q$ uniformly approaches the identity if the length of \textit{either} pair of opposite edges of the rectangle approaches 0. In fact, assuming that the edge connecting $[p,q]$ and $p$ is the shorter leg of the rectangle, it can be shown that there exist $C,\theta>0$ depending only on $\Sig$ and $\A$ such that
\begin{equation}\label{eq: R_q}
\|R_q - \id\| \leq C\cdot d([p,q],p)^\theta.
\end{equation}
See Bochi-Garibaldi \cite{bochi2019extremal}. This \hol continuity of $R_q$ has the following consequence: 
\begin{lem}\label{lem: rectangle}
For any $\d>0$, there exists $m=m(\d)\in \N$ such that
$$
\min \{d([p,q],p),d([p,q],q)\} \leq 2^{-m} \implies \RR_q^{\pm 1}\CC(\mathsf{S},c)\cup\CC(\RR_q^{\pm 1}\mathsf{S},c) \subseteq \CC(\mathsf{S},c+\delta)
$$
for any $\mathsf{S}\subseteq \mathbb{P}^{d-1}$ and $c>0$.
\end{lem}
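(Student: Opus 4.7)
The plan is to use the H\"older estimate \eqref{eq: R_q} to force $R_q$ and $R_q^{-1}$ to be operator-norm close to the identity whenever the rectangle has a short leg, and then convert this into a uniform estimate for the projective action $\RR_q^{\pm 1}$ in the angular metric $\rho$. The two cone inclusions will then fall out of a single triangle-inequality argument.

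The first step is to upgrade \eqref{eq: R_q} to the symmetric bound
\[
\|R_q - \id\| \;\leq\; C \cdot \min\{d([p,q],p),\, d([p,q],q)\}^\theta.
\]
This follows from the same Bochi--Garibaldi argument by swapping the roles of the stable and unstable directions (equivalently, by cyclically regrouping the four-term product defining $R_q$ so that the other pair of opposite edges plays the role of the ``shorter leg''). Hence if $\min\{d([p,q],p), d([p,q],q)\} \leq 2^{-m}$, then $\|R_q - \id\| \leq C\cdot 2^{-m\theta}$, and the Neumann series $R_q^{-1} = \sum_{k\geq 0}(\id - R_q)^k$ gives the analogous bound for $\|R_q^{-1} - \id\|$ once $m$ is large enough. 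Since a matrix $g$ with $\|g-\id\|$ small acts on $\mathbb{P}^{d-1}$ via a map satisfying $\rho(\gsf\v,\v) \leq C'\,\|g-\id\|$ uniformly in $\v$ (an immediate consequence of writing $gv = v + (g-\id)v$ and estimating the angle, or of \eqref{eq: rho and norm} applied to $g$ whose norm and conorm are close to $1$), we conclude that for any prescribed $\delta>0$ one can choose $m=m(\delta)$ large enough to ensure
\[
\rho(\RR_q^{\pm 1}\v,\v) \;\leq\; \delta \qquad \text{for every } \v \in \mathbb{P}^{d-1}.
\]

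Both cone inclusions then reduce to the triangle inequality in $\rho$. For $\v \in \CC(\mathsf{S},c)$, choose $\u \in \mathsf{S}$ with $\rho(\u,\v) \leq c$; then
\[
\rho(\RR_q^{\pm 1}\v,\u) \leq \rho(\RR_q^{\pm 1}\v,\v) + \rho(\v,\u) \leq \delta + c,
\]
proving $\RR_q^{\pm 1}\CC(\mathsf{S},c) \subseteq \CC(\mathsf{S},c+\delta)$. For $\v \in \CC(\RR_q^{\pm 1}\mathsf{S},c)$, choose $\u \in \mathsf{S}$ with $\rho(\v,\RR_q^{\pm 1}\u) \leq c$; then $\rho(\v,\u) \leq \rho(\v,\RR_q^{\pm 1}\u) + \rho(\RR_q^{\pm 1}\u,\u) \leq c + \delta$, proving $\CC(\RR_q^{\pm 1}\mathsf{S},c) \subseteq \CC(\mathsf{S},c+\delta)$.

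The only genuinely delicate point in the whole argument is the symmetric sharpening of \eqref{eq: R_q} in the first step; the passage from an operator-norm estimate on $R_q^{\pm 1}$ to a $\rho$-displacement estimate on $\RR_q^{\pm 1}$ and the cone-inclusion conclusion are routine.
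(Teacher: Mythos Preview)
Your proof is correct and follows essentially the same approach as the paper: reduce everything to the statement that $\rho(\RR_q^{\pm 1}\v,\v)\leq\delta$ for all $\v$ once the rectangle has a short enough leg, and then apply the triangle inequality in $\rho$ to obtain each cone inclusion. The paper's proof is terser (it simply asserts the displacement bound from \eqref{eq: R_q} without spelling out the Neumann-series step for $R_q^{-1}$ or the operator-norm-to-angle conversion), but the logic is identical.
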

\begin{proof}
Suppose $\v\in \RR_q^{\pm 1}\CC(\mathsf{S},c)$, meaning that there exists $\w \in \mathsf{S}$ with $\rho(\RR_q^{\mp 1}\v,\w) \leq c$. Given $\delta>0$, we can find $m = m(\delta)$ using \eqref{eq: R_q} such that $\rho(\v, \RR_q^{\mp 1}\v) < \delta$ when $d([p,q],p)<2^{-m}$ or $d([p,q],q)<2^{-m}$.
Then the triangle inequality $\rho(\v,\w) \leq \rho(\v,\RR_q^{\mp 1}\v)+\rho(\RR_q^{\mp 1}\v,\w)$ gives the inclusion $\RR_q^{\pm 1}\CC(\mathsf{S},c) \subseteq \CC(\mathsf{S},c+\delta)$. The other inclusion $\CC(\RR_q^{\pm 1}\mathsf{S},c)\subseteq \CC(\mathsf{S},c+\delta)$ can likewise be established.
\end{proof}

\subsection{Typical cocycles}
We now describe the typicality assumption that appeared in main results from the introduction.

Let $p\in \Sig$ be a periodic point. We say $z \in \Sig$ is a \emph{homoclinic point} of $p$ if $z$ belongs to $\W^s(p) \cap \W^u(p) \setminus \{p\}$; that is, it is a point whose orbit synchronously approaches the orbit of $p$ both forward and backward time. 
We denote the set of all homoclinic points of $p$ by $\mathcal{H}(p)$. For uniformly hyperbolic systems such as $(\Sig,\s)$, $\mathcal{H}(p)$ is dense for every periodic point $p$. For any $z \in \mathcal{H}(p)$, we define its \emph{holonomy loop} by $$\psi_z:=H^{s}_{z,p}\circ H^u_{p,z}.$$
\begin{defn}[Typicality]\label{defn: typical}
We say a fiber-bunched cocycle $\A$ is \emph{1-typical} if there exist a periodic point $p \in \Sig$ and a homoclinic point $z\in \mathcal{H}(p)$ (we say such $(p,z)$ is the \emph{typical pair} of $\A$) such that
\begin{enumerate}
\item $P:=\A^{\text{per}(p)}(p)$ has simple eigenvalues of distinct moduli, and
\item Denoting the eigenvectors of $P$ by $\{v_1,\ldots,v_d\}$, for any $I,J \in \{1,\ldots,d\}$ with $|I|+|J| \leq d$,  the set of vectors
$$\{\psi_z(v_i) \colon i \in I\} \cup \{v_j \colon j \in J\}$$
is linearly independent. 
\end{enumerate}

If $\A^{\wedge t}$ is 1-typical with respect to the \textit{same} typical pair $(p,z)$ for all $1 \leq t \leq d-1$, then we say $\A$ is \emph{typical}. 
\end{defn}

Two conditions from the 1-typicality assumption above are called \emph{pinching} and \emph{twisting}, respectively. We note that the typicality assumption first appeared in Bonatti and Viana \cite{bonatti2004lyapunov} in a slightly weaker form than our definition above; they only require 1-typicality of $\A^{\wedge t}$ for $1 \leq t\leq d/2$, and they do not ask the typical pair $(p,z)$ to be the same pair over different $t$ (ie, the typical pair could differ for each $\A^{\wedge t}$). With such version of typicality, they proved that typical cocycles have simple Lyapunov exponents with respect to any ergodic measures with continuous local product structure. They also showed in the same paper that the set of typical cocycles forms an open and dense subset among the set of fiber-bunched cocycles. Although our version of typicality is slightly stronger, their perturbation technique still applies to show that the set of typical cocycles is open and dense.

We note that the only role of the fiber-bunching condition on $\A$ from the above definition is to guarantee the convergence of the canonical holonomies. In fact, we can speak of the 1-typicality assumption for cocycles that are not necessarily fiber-bunched but still admit uniformly continuous holonomies. For instance, while the exterior product cocycles $\A^{\wedge t}$ may not necessarily be fiber-bunched, they still admit \hol continuous canonical holonomies, namely $(H^{s/u})^{\wedge t}$. So we may still consider the 1- typicality assumption on the exterior product cocycles as did in Definition \ref{defn: typical}.

We also note that the typicality assumption has appeared elsewhere in the literature for different purposes. For instance, the author \cite{park2020quasi} showed that the singular value potentials $\Phi_\A^s$ of typical cocycles $\A$ have unique equilibrium states; see Section \ref{sec: proof other thms} for details.

For simplicity, we will always assume that $p$ is a fixed point by passing to the power $\A^{\text{per}(p)}$ if necessary (because powers of typical cocycles are typical). Also for any homoclinic point $z\in \mathcal{H}(p)$, $\s^nz$ is also a homoclinic point of $p$  for any $n \in \Z$. Moreover, their holonomy loops are conjugated by $P^n$:
$$P^n\psi_z = \psi_{\s^{n}z} P^n.$$ 
This implies that if $z\in \mathcal{H}(p)$ satisfies the twisting condition, then so does any point $\s^n z\in \mathcal{H}(p)$ in its orbit. Hence, we may replace $z$ by any point in its orbit without destroying the twisting assumption. This observation will be used later in the paper.

\subsection{Spannability and transversality}\label{subsec: spannable and transverse}
We now describe the notion of spannability introduced in Bochi and Garibaldi \cite{bochi2019extremal}.
Consider any $x,y\in \Sig$, $x' \in \Wloc^u(x)$, and $y'\in \Wloc^s(y)$ such that $y'=\s^{n}x'$ for some $n\in \N$. We say such points form a \textit{path (of length $n$) from $x$ to $y$}; that is, we say that $$x \xrightarrow{\Wloc^u(x)}x' \xrightarrow{\s^n}y'\xrightarrow{\Wloc^s(y)}y$$ forms a path from $x$ to $y$. Given any such path together with a fiber-bunched cocycle $\A \colon \Sig \to \glr$, by the \emph{cocycle over the path} we mean the expression
$$B_{x,y}:=H^s_{y',y}\A^{n}(x')H^u_{x,x'}.$$ 
When the context is clear, we will simply use the terminology ``path from $x$ to $y$'' to also refer to $B_{x,y}$.

\begin{defn} A fiber-bunched cocycle $\A \colon \Sig \to \glr$ is \textit{spannable} if for every $x,y \in \Sig$ and $v \in \R^d \setminus \{0\}$, there exist $d$ paths $B_1,\ldots, B_d$ from $x$ to $y$ such that $\{B_1(v),\ldots,B_d(v)\}$ forms a basis of $\R^d$.
\end{defn}

Bochi and Garibaldi \cite{bochi2019extremal} showed that under a suitable irreducibility assumption, strongly-bunched (an assumption stronger than the fiber-bunching assumption) cocycles are spannable. Using the compactness of $\Sig$, they also showed that any spannable cocycles $\A \colon \Sig \to \glr$ are \emph{uniformly spannable}, meaning that the length of paths $B_1,\ldots,B_d$ can be uniformly bounded above and the angle between any two vectors in $\{B_1(v),\ldots,B_d(v)\}$ can be uniformly bounded below.

\begin{defn}
A  fiber-bunched cocycle $\A$ is \textit{transverse} if for any $x,y\in \Sig$, a nonzero vector $v\in \R^d$, and a hyperplane $W\subset \R^d$, there exists a path $B_{x,y}$ such that $B_{x,y}v \not\in W$. 
\end{defn} 

Again, using the compactness of $\Sig$ and $\P^{d-1}$, it follows that a transverse fiber-bunched cocycle is \textit{uniformly transverse}, meaning that there exist $\ep>0$ and $N\in\N$ such that the path $B_{x,y}$ can be chosen to have its length at most $N$ and that 
$$\BB_{x,y}\v \not\in\CC(\ww,\ep).$$

We note that a cocycle $\A$ is transverse if and only if spannable. In fact, the ``if'' direction is obvious, and the ``only if'' direction follows from an inductive argument. Typical cocycles are necessarily irreducible, and hence, strongly-bunched cocycles satisfying the typicality assumption are transverse by \cite{bochi2019extremal}. 

A version of transversality (Theorem \ref{thm: simult transverse}) we will prove for typical cocycles is more general and explicit. We consider fiber-bunched cocycles that are not necessarily strongly bunched. Moreover, using concrete data (pinching and twisting) of the typicality assumption, we will construct a common path simultaneously satisfying the transversality condition for all exterior product cocycles $\A^{\wedge t}$, $t \in \{1,\ldots,d-1\}$. Such simultaneous transversality will later be used in proving Theorem \ref{thm: simultaneous}. 

\subsection{Proximal maps}

\begin{defn}\cite{benoist1996actions}\label{defn: ep-proximal} We say $g \in  \glr$ is \textit{proximal} if it has a unique eigenvalue of maximal modulus and this eigenvalue has algebraic multiplicity one. For a proximal map $g \in \glr$, we set $v_g\in \R^d$ be the unit length eigenvector of $g$ corresponding to the eigenvalue of maximum modulus and $V^<_g \subset \R^d$ be the $g$-invariant hyperplane complementary to $v_g$.

For $\ep>0$ and a proximal map $g\in \glr$, we say $g$ is \emph{$\ep$-proximal} if $\rho(\v_g,\vv^<_g) \geq 2\ep$ and 
\begin{equation}\label{eq: ep proximal}
\gsf(\P^{d-1} \setminus \CC(\vv^<_g,\ep)) \subseteq \CC(\v_g,\ep) \text{ and }\|\gsf \mid_{ \P^{d-1} \setminus \CC(\vv^<_g,\ep)}\|_\rho \leq \ep.
\end{equation}
\end{defn}

When $\ep$ is sufficiently small, $\ep$-proximal maps behave similar to rank 1 projections. Recalling from \eqref{eq: singular and eigen} that $\mu_1(g) := \log \|g\|$ is the logarithm of the norm and $\chi_1(g) := \log|\text{eig}_1(g)|$ is the logarithm of the spectral radius, one way to make use of this intuition is via the following proposition:

\begin{prop}\cite{benoist1996actions}\label{prop: property of proximal map}
Given any $\ep>0$, there exists $C=C(\ep)>0$ such that any $\ep$-proximal linear transformation $g \in \glr$ satisfy
$$\mu_1(g)-C\leq \chi_1(g)\leq  \mu_1(g).$$ 
\end{prop}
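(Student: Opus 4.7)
The upper bound $\chi_1(g) \leq \mu_1(g)$ is the elementary fact that the spectral radius is dominated by the operator norm: for the top eigenvector $v$ one has $|\eig_1(g)|\,\|v\| = \|gv\| \leq \|g\|\,\|v\|$. So the plan is to establish the reverse inequality in the multiplicative form $\|g\| \leq C(\ep)\, |\eig_1(g)|$ and then take logarithms.

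The $\ep$-proximal structure provides an invariant splitting $\R^d = \langle v_g \rangle \oplus V^<_g$ on which $g$ acts diagonally: as multiplication by $\eig_1(g)$ on the first summand and as $g' := g|_{V^<_g}$ on the second. Since $\rho(\v_g,\vv^<_g)\geq 2\ep$, the two oblique projections associated with this splitting have norms bounded by some $C_1 = C_1(\ep)$, so every $u \in \R^d$ decomposes as $u = \a v_g + w$ with $w \in V^<_g$ and $|\a| + \|w\| \leq C_1 \|u\|$ (recall $\|v_g\|=1$). It would therefore suffice to prove
\[
\|g'\| \leq C_2(\ep)\, |\eig_1(g)|,
\]
since then for any such $u$ we would get $\|gu\| \leq |\a|\,|\eig_1(g)| + \|g'\|\,\|w\| \leq C_1(1+C_2)\,|\eig_1(g)|\,\|u\|$, yielding the desired bound with $C(\ep) := C_1(\ep)(1+C_2(\ep))$.

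To prove the estimate on $\|g'\|$, fix a unit vector $w \in V^<_g$ and consider the family $u_t := v_g + t w$ for small $t>0$. Since $\rho(\v_g,\vv^<_g)\geq 2\ep$, for $t$ sufficiently small (depending only on $\ep$) the projective point $\u_t$ lies outside $\CC(\vv^<_g,\ep)$, so the second clause of \eqref{eq: ep proximal} applies:
\[
\rho(\gsf\u_t,\v_g) = \rho(\gsf\u_t,\gsf\v_g) \leq \ep\cdot \rho(\u_t,\v_g).
\]
A first-order expansion in $t$ gives $\rho(\u_t,\v_g) = t\,s_1 + O(t^2)$, where $s_1$ is the sine of the angle between $v_g$ and $w$; on the other side, using $gu_t = \eig_1(g)v_g + t\,g'w$ with $g'w \in V^<_g$, one gets $\rho(\gsf\u_t,\v_g) = (t\|g'w\|/|\eig_1(g)|)\,s_2 + O(t^2)$, where $s_2$ is the sine of the angle between $v_g$ and $g'w$. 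Both $s_1$ and $s_2$ are bounded below by $\sin(2\ep)$ by the separation hypothesis, so dividing by $t$ and sending $t\to 0^+$ produces $\|g'w\|\leq (\ep\, s_1/s_2)\,|\eig_1(g)| \leq (\ep/\sin(2\ep))\,|\eig_1(g)|$; taking the supremum over unit $w \in V^<_g$ gives the claim.

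The main obstacle is this first-order computation: both sides of the $\rho$-Lipschitz inequality vanish linearly in $t$, so the whole argument rests on controlling the angular factors $s_1$ and $s_2$ from below. This is precisely where the separation hypothesis $\rho(\v_g,\vv^<_g)\geq 2\ep$ in the definition of an $\ep$-proximal map is indispensable --- without it one cannot prevent $s_2$ from being arbitrarily small, and the desired bound on $\|g'w\|$ would collapse.
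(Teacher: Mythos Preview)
Your argument is correct. The paper does not supply its own proof of this proposition; it is quoted from \cite{benoist1996actions}, so there is no in-paper argument to compare against. Your infinitesimal approach---using the $\rho$-Lipschitz contraction on $\P^{d-1}\setminus\CC(\vv^<_g,\ep)$ along the curve $\u_t$ and passing to first order in $t$---is a clean way to extract the bound $\|g|_{V^<_g}\|\leq(\ep/\sin 2\ep)\,|\eig_1(g)|$; the key observation that both $w$ and $g'w$ lie in $V^<_g$, hence make angle at least $2\ep$ with $v_g$, is exactly what keeps the ratio $s_1/s_2$ under control. One cosmetic remark: the threshold on $t$ needed for $\u_t\notin\CC(\vv^<_g,\ep)$ could in principle depend on $g$ and $w$, but since you pass to the limit $t\to 0^+$ this is immaterial to the conclusion.
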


When we later construct the required periodic point $q$ to establish Theorem \ref{thm: simultaneous} (see also Theorem \ref{thm: model thm}), we will construct it so that $\A^{n_q}(q)$ and its exterior products are $\ep$-proximal and then apply the above proposition. For the construction, we will have to control the $\rho$-norm of $\mathsf{A}^{n_q}(q)$ using the following proposition of Abels, Margulis, and Soifer.

\begin{prop}\cite[Proposition 4.2]{abels1995semigroups}\label{prop: AMS} We can associate with every $g\in\glr$ a hyperplane $\U_{g} \subset \R^d$ with the following property. For every $\ep>0$ there is a constant $c = c(\ep)>0$ such that for every $g\in \glr$, 
$$\|\gsf\mid_{\mathbb{P}^{d-1} \setminus \CC(\UU_{g},\ep)}\|_\rho \leq c.$$
\end{prop}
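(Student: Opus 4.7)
The plan is to choose $\U_g$ canonically via the singular value decomposition of $g$ and then reduce the estimate to an elementary coordinate computation. Write $g = K_1 D K_2$ with $K_1, K_2 \in O(d)$ and $D = \mathrm{diag}(\sigma_1(g), \ldots, \sigma_d(g))$, and set
$$\U_g := K_2^{-1}\big(\mathrm{span}(e_2, \ldots, e_d)\big),$$
so that $\U_g$ is the span of the $d-1$ right singular directions of $g$ \emph{other than} the top one. The heuristic is that any vector outside an $\ep$-neighborhood of $\UU_g$ gets mapped by $\gsf$ into a small neighborhood of the top left singular direction, so $\gsf$ should act there as a contraction in $\rho$.

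The first step is to reduce to the diagonal case. Since orthogonal matrices preserve the angular metric $\rho$ on $\P^{d-1}$, the induced maps $\mathsf{K}_1, \mathsf{K}_2$ are $\rho$-isometries, and $\mathsf{K}_2$ sends $\P^{d-1} \setminus \CC(\UU_g, \ep)$ isometrically onto $\P^{d-1} \setminus \CC(\mathsf{e}_1^\perp, \ep)$. Consequently
$$\|\gsf|_{\P^{d-1} \setminus \CC(\UU_g, \ep)}\|_\rho = \|\mathsf{D}|_{\P^{d-1} \setminus \CC(\mathsf{e}_1^\perp, \ep)}\|_\rho,$$
and it remains to bound the right-hand side uniformly in the singular values by a constant depending only on $\ep$.

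The key step is a direct computation in the affine chart $[1 : x_2 : \cdots : x_d]$ with $x = (x_2, \ldots, x_d) \in \R^{d-1}$, which covers $\P^{d-1} \setminus \mathsf{e}_1^\perp$. Using that $\rho(\v, \mathsf{e}_1^\perp) = \arcsin |v_1|$ for a unit representative $v$, one checks that $\P^{d-1} \setminus \CC(\mathsf{e}_1^\perp, \ep)$ corresponds in this chart precisely to the closed Euclidean ball $\{|x| \leq \cot \ep\}$. On this ball, $\mathsf{D}$ acts by the linear contraction
$$x \longmapsto \big((\sigma_2/\sigma_1)\, x_2, \ldots, (\sigma_d/\sigma_1)\, x_d\big),$$
which has Euclidean operator norm at most $1$ since $\sigma_i \leq \sigma_1$, and in particular maps the ball into itself. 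Because this ball is a compact region whose size depends only on $\ep$, the Euclidean chart coordinate and the angular metric $\rho$ are bi-Lipschitz equivalent on it with constants depending only on $\ep$. Composing these two bi-Lipschitz equivalences (on source and target) with the Euclidean contraction given by $\mathsf{D}$ yields the desired bound $\rho(\mathsf{D}\v, \mathsf{D}\v') \leq c(\ep)\,\rho(\v, \v')$ on the region, with $c(\ep)$ independent of $g$.

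There is no serious obstacle; the argument is really an unpacking of the SVD followed by explicit chart estimates. The one point that genuinely requires the SVD-based choice of $\U_g$ (rather than some other hyperplane associated to $g$) is the forward invariance of the ball $\{|x| \leq \cot \ep\}$ under $\mathsf{D}$, which uses $\sigma_i/\sigma_1 \leq 1$; without this, the image would not be contained in the region where chart coordinates and $\rho$ are bi-Lipschitz with constants uniform in $g$, and the constant $c(\ep)$ would have to depend on $g$.
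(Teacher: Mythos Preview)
The paper does not supply its own proof of this proposition; it is quoted from \cite{abels1995semigroups}, and the only additional content in the paper is the remark immediately following the statement that $\U_g$ may be taken as $V\langle e_2,\ldots,e_d\rangle$ when $g = U\Lambda V^*$ is a $KAK$-decomposition. Your choice $\U_g = K_2^{-1}(\mathrm{span}(e_2,\ldots,e_d))$ agrees with this description exactly.

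Your argument is correct. The reduction to the diagonal case via $\rho$-isometry of orthogonal maps is immediate, and the chart computation is accurate: the region $\P^{d-1}\setminus\CC(\mathsf{e}_1^\perp,\ep)$ is the $\rho$-ball of radius $\pi/2-\ep$ about $\mathsf{e}_1$, which corresponds to the Euclidean ball $\{|x|\le\cot\ep\}$ in the chart $[1:x]$, and $\mathsf{D}$ acts there by the linear map $x_i\mapsto(\sigma_i/\sigma_1)x_i$, which has Euclidean Lipschitz constant at most $1$ and preserves the ball. The only step that deserves a sentence of justification is the bi-Lipschitz equivalence of the chart metric and $\rho$ on this ball with constants depending only on $\ep$: this follows because both the ball and its image in $\P^{d-1}$ are geodesically convex (the latter being a $\rho$-ball of radius $<\pi/2$), so it suffices to bound the differential of the chart map and its inverse on the compact ball, and these bounds depend only on $\cot\ep$. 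With that made explicit, your proof is complete and self-contained.
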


The hyperplane $\U_{g}$ admits a simple description via the $KAK$-decomposition; if $g = U\Lambda V^*$ is a $KAK$-decomposition of $g$ where $\Lambda$ is a diagonal matrix whose entries are the singular values of $g$ listed in a decreasing order, then $\U_{g}$ may be taken to be $V\langle e_2,\ldots, e_d\rangle$. 

The control on the $\rho$-norm of $\mathsf{A}^{n_q}(q)$ from Proposition \ref{prop: AMS} will then later be used to show that $\A^{n_q}(q)$ is $\ep$-proximal via the following criteria of Tits:
\begin{prop}\label{prop: tits} Suppose there exists a compact subset $\mathsf{K}\subseteq \P^{d-1}$ such that 
$$
\gsf(\mathsf{K}) \subset \mathsf{K}^{\mathrm{o}} \text{ and } \|\gsf \mid_\mathsf{K}\|_\rho<1.
$$
Then $g$ is proximal, and $\gsf$ has a unique fixed point $\v_g\in \mathsf{K}^{\mathrm{o}}$ and $\vv_g^<$ does not intersect $\mathsf{K}$.

Moreover, given any $\ep>0$, there exists $\xi = \xi(\ep)>0$ such that if $\mathsf{K}$ is given by $\CC(\w,3\ep)$ for some $\w\in \P^{d-1}$ and 
$$
\gsf(\CC(\w,3\ep)) \subseteq \CC(\w,\ep) \text{ and } \|\gsf \mid_{ \CC(\w,3\ep)}\|_\rho<\xi,
$$
then $g$ is $\ep$-proximal. 
\end{prop}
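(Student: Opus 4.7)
The plan is to establish the first assertion via Banach's fixed-point theorem followed by a differential analysis at the fixed point, and then deduce the quantitative $\ep$-proximality statement by applying the first part with $\mathsf{K}=\CC(\w,3\ep)$ and combining the resulting proximal structure with Proposition \ref{prop: AMS} to extend the local contraction globally.

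For the first assertion, since $\gsf\colon \mathsf{K}\to \mathsf{K}^{\mathrm{o}}\subseteq \mathsf{K}$ is a strict contraction on the complete metric space $\mathsf{K}$, Banach's fixed-point theorem produces a unique fixed point $\v\in \mathsf{K}^{\mathrm{o}}$, which lifts to a real eigenvector $v$ of $g$ with real eigenvalue $\lambda$. The key step is to identify the differential $D\gsf_{\v}$ (with respect to the metric on $\P^{d-1}$ induced by $\rho$) with the action of $\lambda^{-1}g$ on the quotient $\R^{d}/\R v$, whose complex spectrum consists of the ratios $\mu/\lambda$, where $\mu$ ranges over the eigenvalues of $g$ (with multiplicity) after removing one copy of $\lambda$. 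Since $\|D\gsf_{\v}\|\leq \|\gsf|_{\mathsf{K}}\|_{\rho}<1$ majorizes the spectral radius, every such ratio has modulus strictly less than $1$. This forces $\lambda$ to have algebraic multiplicity $1$ (otherwise the value $1$ would appear in the spectrum) and $|\mu|<|\lambda|$ for every other eigenvalue $\mu$, so $g$ is proximal with $\v_g=\v$. Finally, the closed $\gsf$-invariant set $\vv_g^{<}$ cannot meet $\mathsf{K}$: any point of $\vv_g^{<}\cap\mathsf{K}$ would generate a forward orbit in $\vv_g^{<}\cap\mathsf{K}$ converging to the unique fixed point $\v_g\notin \vv_g^{<}$, a contradiction.

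For the quantitative statement, I apply the first part with $\mathsf{K}:=\CC(\w,3\ep)$ (imposing $\xi<1$) to obtain that $g$ is proximal with $\v_g\in \gsf(\mathsf{K})\subseteq \CC(\w,\ep)$ and $\vv_g^{<}\cap \CC(\w,3\ep)=\emptyset$, so $\rho(\v_g,\w)\leq\ep$ and $\rho(\vv_g^{<},\w)>3\ep$. The triangle inequality immediately yields the first $\ep$-proximality condition $\rho(\v_g,\vv_g^{<})\geq 2\ep$. For the remaining two conditions, the strong local contraction $\|D\gsf_{\v_g}\|<\xi$ combined with the attracting property $\gsf(\mathsf{K})\subseteq \mathsf{K}^{\mathrm{o}}$ --- which forces $\mathsf{K}$ to lie away from the expansion hyperplane $\UU_g$, since $\gsf$ expands points close to $\UU_g$ --- can be translated through a $KAK$ computation into a singular-value bound of the form $\a_2(g)/\a_1(g)=O(\xi)$, and in turn into $\rho(\UU_g,\vv_g^{<})=O(\xi)$ by standard perturbation arguments. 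Choosing $\xi=\xi(\ep)$ small enough (of order $\ep^{3}$) so that $\CC(\UU_g,\ep/2)\subseteq \CC(\vv_g^{<},\ep)$, for any $\u\in \P^{d-1}\setminus \CC(\vv_g^{<},\ep)$ we have $\rho(\u,\UU_g)>\ep/2$. Combining Proposition \ref{prop: AMS} with the refined pointwise estimate $\|D\gsf_{\u}\|\lesssim (\a_2(g)/\a_1(g))\cdot \rho(\u,\UU_g)^{-2}$ (obtained by direct computation in a $KAK$ frame) then delivers both $\gsf(\P^{d-1}\setminus \CC(\vv_g^{<},\ep))\subseteq \CC(\v_g,\ep)$ and $\|\gsf|_{\P^{d-1}\setminus \CC(\vv_g^{<},\ep)}\|_{\rho}\leq \ep$, completing the verification of $\ep$-proximality.

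The main obstacle I anticipate is upgrading the \emph{local} Lipschitz bound $\|\gsf|_{\mathsf{K}}\|_{\rho}<\xi$ into a \emph{global} bound on the singular-value ratio $\a_2(g)/\a_1(g)$: the eigenvalue gap $|\eig_2(g)|/|\eig_1(g)|<\xi$ follows immediately from the differential analysis, but transferring it to singular values requires exploiting the dichotomy of projective dynamics (expansion near $\UU_g$, contraction near the top-left singular direction) together with the attracting property, so as to confirm that $\mathsf{K}$ lies firmly in the contracting regime. Once this conversion is done, the closeness between $\UU_g$ and $\vv_g^{<}$ and the refined Lipschitz estimates on $\P^{d-1}\setminus\CC(\UU_g,\ep/2)$ reduce to routine $KAK$-level computations.
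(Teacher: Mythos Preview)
Your treatment of the first assertion is correct and essentially reproduces Tits' argument that the paper merely cites: the Banach fixed-point step and the spectral identification of $D\gsf_{\v}$ with $\lambda^{-1}g$ acting on $\R^d/\R v$ are exactly the right ingredients, and your argument that $\vv_g^<$ misses $\mathsf{K}$ is clean.

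For the second assertion, your derivation of $\rho(\v_g,\vv_g^<)\ge 2\ep$ coincides with the paper's. For the remaining two conditions of $\ep$-proximality the paper gives no argument at all, merely asserting they hold for $\xi$ small. Your attempt to supply one is therefore welcome, but the route you choose---passing through the singular-value ratio $\alpha_2(g)/\alpha_1(g)$ and the hyperplane $\UU_g$---creates the very obstacle you flag. The natural frame here is the \emph{eigenspace} splitting $\R^d=\R v_g\oplus V_g^<$, not the $KAK$ frame. Since you already know the angle between $v_g$ and $V_g^<$ is at least $2\ep$, the associated oblique projections have norms bounded by a constant $C(\ep)$. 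A direct computation shows that for $w'\in V_g^<$ one has $\|D\gsf_{\v_g}(\pi_{v_g^\perp}w')\|=\|\pi_{v_g^\perp}(gw')\|/|\lambda|\ge \sin(2\ep)\,\|gw'\|/|\lambda|$, so the hypothesis $\|D\gsf_{\v_g}\|<\xi$ yields immediately
\[
\|g|_{V_g^<}\|\ \le\ \frac{\xi}{\sin(2\ep)}\,|\lambda|.
\]
From this single estimate both remaining conditions follow by elementary calculations: writing any $u\notin\CC(\vv_g^<,\ep)$ as $u=av_g+w$ with $w\in V_g^<$, one has $|a|\ge\sin\ep$ and $\|w\|\le C(\ep)$, whence $gu=a\lambda\big(v_g+O_\ep(\xi)\big)$, giving $\gsf u\in\CC(\v_g,\ep)$ and the Lipschitz bound for $\xi$ small enough depending only on $\ep$. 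No passage through $\alpha_2/\alpha_1$ or through $\rho(\UU_g,\vv_g^<)$ is needed; your ``main obstacle'' disappears once you stay in the eigenspace decomposition.
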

\begin{proof}
For the first statement, see \cite{tits1972free}. 
For the second statement, since $\v_g \in \CC(\w,\ep)$ and $\vv^<_g$ does not intersect $\CC(\w,3\ep)$, we have $\rho(\v_g,\vv^<_g) \geq 2\ep$ which verifies the first condition of the $\ep$-proximality. The remaining two conditions \eqref{eq: ep proximal} can be met by choosing $\xi$ sufficiently small depending on $\ep$.
\end{proof}

\section{Construction of proximal maps from 1-typicality}\label{sec: proximal construction}
The goal of this section is prove the following theorem for 1-typical cocycles. We will later need a generalized version of this result (Theorem \ref{thm: construct proximal general}) in order to prove Theorem \ref{thm: simultaneous}. 
Since the proof for the generalized result is essentially identical, we will first focus on the simplified result and illustrate the construction in full detail. 

\begin{thm}\label{thm: model thm}
Let $\A \colon \Sig \to \glr$ be a 1-typical cocycle. Then there exists $\tau_0 = \tau_0(\A)>0$ such that for any $\tau \in (0,\tau_0)$, there exists $k = k(\tau) \in \N$ with the following property: for any $x \in \Sig$ and $n \in \N$ there exists a periodic point $q \in\Sig$ of period $n_q \in [n,n+k]$ such that 
\begin{enumerate}
\item $\A^{n_q}(q)$ is $\tau$-proximal, and
\item there exists $j\in \N$ such that $\s^j q \in [x]_n$.
\end{enumerate}
\end{thm}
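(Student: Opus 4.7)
My plan is to build a periodic orbit $q$ whose period decomposes into a shadowing segment of $x$ of length $n$, short bridges, and a long ``proximalizing'' segment that passes through $p$ and makes one homoclinic excursion to $z$. After passing to a power of $\A$ so that $p$ is a fixed point, write $P = \A(p)$; pinching yields eigenvectors $v_1, \ldots, v_d$ of distinct eigenvalue moduli, and I set $V_1^< := \mathrm{span}(v_2, \ldots, v_d)$. Twisting gives $\psi_z v_1 \notin V_1^<$, so $2\alpha := \rho(\Psi_z \v_1, \vv_1^<) > 0$. Replacing $z$ by $\s^j z$ if necessary, fix $N$ with $\s^{-N} z \in \Wloc^u(p)$ and $\s^N z \in \Wloc^s(p)$; standard holonomy--cocycle identities then give $\A^{2N}(\s^{-N} z) = P^N \psi_z P^N$ modulo two holonomy factors close to the identity. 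I will take $q$ to be the periodic point one of whose periods is the concatenation of (a) a stay of length $M$ in the $p$-cylinder, (b) the orbit piece of $z$ of length $2N$, (c) another stay of length $M$ in the $p$-cylinder, (d) a short bridge $B_{\mathrm{bt}}$ from $p$ to $x_0$, (e) the shadowing word $x_0 \cdots x_{n-1}$, and (f) a short bridge $B_{\mathrm{bb}}$ from $\s^n x$ back to $p$. Using \eqref{eq: same cylinder} to absorb the cylinder-level holonomies, the cocycle over one period factorizes as
\[
\A^{n_q}(q) \;\approx\; B_{\mathrm{bb}} \cdot \A^n(x) \cdot B_{\mathrm{bt}} \cdot P^{M'} \cdot \psi_z \cdot P^{M'}, \qquad M' := M + N.
\]

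\textbf{Choice of bridges.} Fix $\tau_0 < \alpha/10$ and $\tau \in (0, \tau_0)$. The bridges $B_{\mathrm{bt}}, B_{\mathrm{bb}}$ will be chosen of uniformly bounded length $\leq L = L(\A)$ to satisfy, for some uniform $\eta = \eta(\A) > 0$: (i) $\rho(\BB_{\mathrm{bt}} \v_1, \UU_{\A^n(x)}) \geq \eta$, where $\UU_{\A^n(x)}$ is the singular hyperplane from Proposition \ref{prop: AMS}; and (ii) $\rho(\w^*, \vv_1^<) \geq \eta$, where $\w^* := \BB_{\mathrm{bb}} \AA^n(x) \BB_{\mathrm{bt}}(\v_1)$. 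The existence of such bridges is a form of transversality derivable from the 1-typical data: the projective $\langle P, \psi_z \rangle$-orbit of $\v_1$ in $\P^{d-1}$ accumulates densely enough to avoid any prescribed hyperplane uniformly. Choose $M = M(\tau) \in \N$ large enough that $C_0 (|\lambda_2(P)|/|\lambda_1(P)|)^{M'}$ is below the threshold $\xi(\tau)$ from Proposition \ref{prop: tits}, and set $k := 2 M' + 2L$, so that $n_q \in [n, n+k]$.

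\textbf{Verification via Tits.} I then apply Proposition \ref{prop: tits} to $\mathsf{K} := \CC(\w^*, 3\tau)$. Tracking $\mathsf{K}$ through the composition from right to left: the first $P^{M'}$ collapses $\mathsf{K}$ into $\CC(\v_1, \delta)$ with $\delta \leq C_0 (|\lambda_2(P)|/|\lambda_1(P)|)^{M'}$, because $\rho(\w^*, \vv_1^<) \geq \eta$ places $\mathsf{K}$ outside a neighborhood of $\vv_1^<$; $\Psi_z$ spreads this to $\CC(\Psi_z \v_1, \|\Psi_z\|_\rho \cdot \delta)$; the second $P^{M'}$ contracts again (using $\rho(\Psi_z \v_1, \vv_1^<) \geq 2\alpha$ from twisting), yielding $\CC(\v_1, \delta')$ with $\delta'$ exponentially smaller in $M'$; $\BB_{\mathrm{bt}}$ spreads by the bounded factor $\|\BB_{\mathrm{bt}}\|_\rho$; $\AA^n(x)$ spreads by at most $c(\eta)$ uniformly in $n$ and $x$ by Proposition \ref{prop: AMS} combined with (i); and $\BB_{\mathrm{bb}}$ spreads by a bounded factor, producing an image contained in $\CC(\w^*, C \delta')$ for a uniform $C$. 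For $M'$ sufficiently large the image lies in $\CC(\w^*, \tau)$ and the total $\rho$-norm along the loop is below $\xi(\tau)$, so Proposition \ref{prop: tits} yields $\tau$-proximality of $\A^{n_q}(q)$.

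\textbf{Main obstacle.} The principal technical difficulty is that $\|\AA^n(x)\|_\rho$ is not uniformly bounded as $n$ grows, which would a priori destroy the contraction estimates; the resolution is to steer the intermediate cone away from $\UU_{\A^n(x)}$ via the bridge $B_{\mathrm{bt}}$, after which Proposition \ref{prop: AMS} produces the uniform bound $c(\eta)$. Establishing that such bridges, satisfying both (i) and (ii) with uniformly bounded length and uniform angular separation, exist from only the 1-typical data is the second nontrivial ingredient and will be built from the projective dynamics generated by $P$ and $\psi_z$; tracking the bounded-distortion holonomy corrections along the ``stay at $p$'' and homoclinic segments via \eqref{eq: same cylinder} is routine and absorbed in the uniform constants.
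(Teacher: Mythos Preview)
Your proposal is correct and follows essentially the same plan as the paper's proof. Both arguments (a) shadow the $x$-orbit for $n$ steps, (b) use uniform transversality derived from the 1-typical data to steer $\v_1$ away from the AMS hyperplane so that Proposition~\ref{prop: AMS} supplies a uniform $\rho$-norm bound on the long segment, (c) pass through a homoclinic excursion to $z$ together with large iterates of $P$ to contract projectively, and (d) close up and verify Tits' criterion (Proposition~\ref{prop: tits}).

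The one organizational difference worth noting is how you handle the direction \emph{after} the $\A^n(x)$-segment. You impose a second transversality requirement (ii) on the exit bridge $B_{\mathrm{bb}}$, forcing $\w^*$ away from $\vv_1^<$ so that the first $\PP^{M'}$ contracts the cone $\CC(\w^*,3\tau)$ directly toward $\v_1$. The paper instead lets the exit to $p$ be generic (just via mixing), then applies a \emph{bounded} iterate $\PP^a$ (Lemma~\ref{lem: powers of P}) to bring the resulting direction close to \emph{some} eigendirection $\v_i$, and only then uses $\Psi$ followed by a large iterate $\PP^\ell$ to reach $\v_1$. So your route calls uniform transversality twice and proximalizes with $\PP^{M'}\Psi\PP^{M'}$; the paper calls it once, at the cost of the extra ``powers-of-$P$'' step and the slightly asymmetric $\PP^\ell\Psi\PP^a$. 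Both work; your version is a bit cleaner once transversality is available, while the paper's minimizes the number of carefully chosen bridges. The ``second nontrivial ingredient'' you flag---existence of bridges (i),(ii) with uniform $L,\eta$---is exactly the content of the paper's Theorem~\ref{thm: 1-transversality}, so you have correctly located where the real work lies. One small omission: you should also take $\tau_0<\eta/3$ (not only $\tau_0<\alpha/10$) so that the entire cone $\CC(\w^*,3\tau)$ stays outside a fixed neighborhood of $\vv_1^<$ before the first $\PP^{M'}$ is applied.
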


We make a few simplifications and set up the relevant notations and lemmas before we begin the proof. We will continue to assume the notations set in Notation \ref{not: 1}.

First, we make a few simplifying assumptions. First, we continue to assume that $p$ is a fixed point and denote the eigenvectors of $P:=\A(p)$ by $\{{v_1},\ldots,v_{d}\}$, listed in the order of decreasing absolute values for their corresponding eigenvalues. We also define the $P$-invariant hyperplanes $$\WW_i :=\text{span}\{ v_1,\ldots, v_{i-1},v_{i+1},\ldots,v_{d}\}\subseteq \R^d .$$

Consider now the distinguished homoclinic point $z \in \mathcal{H}(p)$ of $p$ from the typicality assumption. Since any point $\s^n z$ in the orbit of $z$ satisfies the twisting assumption, by replacing $z$ by a suitable preimage in its backward orbit, we may suppose that $z$ lies in $\Wloc^u(p)$. For any $\ell \in \N$ such that $\s^\ell z\in \Wloc^s(p)$, the $\A$-equivariance property of the canonical holonomies implies that the following relationship holds for the holonomy loop $\psi_z$: 

\begin{equation}\label{eq: psi ell}
P^\ell \psi_z = H^{s}_{\s^\ell z, p}\A^\ell(z)H^{u}_{p,z}.
\end{equation}

We now introduce a few preliminary lemmas. The first lemma, whose proof is clear, exploits the pinching assumption on $P$.

\begin{lem}\label{lem: powers of P}
Suppose $P \in \glr$ has simple eigenvalues of distinct moduli with corresponding eigenvectors $\{v_1,\ldots,v_d\}$. Then for any $\ep>0$ there exists $N = N(\ep)\in \N$ such that for any $\v\in \P^{d-1}$ there exists $a \in [0,N]$ satisfying
$$\PP^a \v \in \bigcup\limits_{i=1}^d\CC(\v_i,\ep)$$
\end{lem}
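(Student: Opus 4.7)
The plan is to give a compactness argument. Working in the eigenbasis $\{v_1,\ldots,v_d\}$ of $P$, any $v\in \R^d\setminus\{0\}$ can be written as $v=\sum_{i=1}^d c_i v_i$. Letting $i^\ast = \min\{i : c_i\neq 0\}$ and denoting by $\lambda_1,\ldots,\lambda_d$ the eigenvalues corresponding to $v_1,\ldots,v_d$, I would first observe that
\[
\lambda_{i^\ast}^{-n} P^n v \;=\; c_{i^\ast} v_{i^\ast} + \sum_{i>i^\ast} \left(\frac{\lambda_i}{\lambda_{i^\ast}}\right)^{\!n} c_i v_i \;\xrightarrow[n\to\infty]{}\; c_{i^\ast} v_{i^\ast},
\]
which uses the hypothesis $|\lambda_{i^\ast}|>|\lambda_i|$ for $i>i^\ast$. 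Passing to $\P^{d-1}$, this gives $\PP^n \v \to \v_{i^\ast}$ for every $\v\in\P^{d-1}$.

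From this pointwise convergence I would derive uniform convergence in bounded time by a standard open-cover argument. Fix $\ep>0$. For each $\v\in \P^{d-1}$, the limit above yields some $a(\v)\in \N_0$ with $\PP^{a(\v)}\v \in \CC(\v_{i^\ast},\ep/2)^{\mathrm{o}}$; since the map $\PP^{a(\v)}$ is continuous and $\CC(\v_{i^\ast},\ep)^{\mathrm{o}}$ is open, there is an open neighborhood $U_\v\ni \v$ such that
\[
\PP^{a(\v)}\!\left(U_\v\right) \;\subseteq\; \CC(\v_{i^\ast},\ep)^{\mathrm{o}}\;\subseteq\;\bigcup_{i=1}^d \CC(\v_i,\ep).
\]
The family $\{U_\v\}_{\v\in\P^{d-1}}$ covers the compact space $\P^{d-1}$, so finitely many $U_{\v^{(1)}},\ldots,U_{\v^{(m)}}$ suffice; taking $N:=\max\{a(\v^{(1)}),\ldots,a(\v^{(m)})\}$ gives the desired uniform bound, since any $\v$ lies in some $U_{\v^{(j)}}$ and thus $\PP^{a(\v^{(j)})}\v$ lands in $\bigcup_i \CC(\v_i,\ep)$ with $a(\v^{(j)})\in[0,N]$.

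There is essentially no obstacle here: the pinching assumption (distinct moduli) makes the forward projective dynamics of $P$ a gradient-like system whose attractor is the finite set $\{\v_1,\ldots,\v_d\}$ (with $\v_1$ the global attractor and the higher $\v_i$'s saddles with lower-dimensional stable manifolds $\WW_1\cap\cdots\cap\WW_{i-1}$). The only mild subtlety is that the time to reach $\CC(\v_1,\ep)$ is not globally uniform — points starting very close to $\WW_1$ spend a long time near $\v_{i^\ast}$ for some $i^\ast\geq 2$ before escaping — but that is precisely why the statement allows the orbit to hit the neighborhood of \emph{any} eigendirection rather than only $\v_1$. The compactness argument absorbs this without trouble.
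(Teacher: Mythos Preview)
Your argument is correct; the paper itself omits the proof entirely, stating only that it ``is clear.'' Your compactness argument is exactly the standard way to make this precise, and the observation you make at the end---that one must allow the orbit to land near \emph{some} $\v_i$ rather than only $\v_1$, since the time to reach $\CC(\v_1,\ep)$ blows up near $\ww_1$---is precisely the point of the lemma.
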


\begin{lem}\label{lem: connect}
Let $x,y,z\in \Sig$, and consider paths $B_1:=H^s_{y_0,y}\A^n(x_0) H^u_{x,x_0}$ from $x$ to $y$ (via $x_0$ and $y_0:=\s^n x_0$) and $B_2:= H^s_{z_1,z}\A^m(y_1)H^u_{y,y_1}$ from $y$ to $z$ (via $y_1$ and $z_1:=\s^m y_1$). Setting $w:=\s^{-n}[y_0,y_1] \in \Wloc^u(x)$ and $\wt{w}:=\s^{m+n}(w)\in \Wloc^s(z)$, the following identity holds:
$$H^s_{\wt{w},z}\A^{n+m}(w)H^u_{x,w}= B_2 R_{\s^n w}B_1$$
where $R_{\s^n w}:=H^u_{y_1,y}H^s_{\s^n w,y_1}H^u_{y_0,\s^n w}H^s_{y,y_0}$ is defined as in \eqref{eq: R} through opposite vertices $y$ and $\s^nw$.
\end{lem}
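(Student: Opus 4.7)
My plan is to expand both sides of the identity by substituting the definitions of $B_1$, $B_2$, and $R_{\s^n w}$, and then apply two standard mechanisms repeatedly: the group-like cancellation $H^{s/u}_{a,b} H^{s/u}_{b,a} = \id$, and the $\A$-equivariance of the canonical holonomies, which in iterated form reads $\A^n(a) H^u_{b,a} = H^u_{\s^n b, \s^n a} \A^n(b)$ for $b \in \Wloc^u(a)$, and analogously with $s$ in place of $u$.

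First I would expand $B_2 R_{\s^n w} B_1$ and observe that the adjacent pairs $H^u_{y,y_1} H^u_{y_1,y}$ and $H^s_{y_0,y} H^s_{y,y_0}$ each collapse to the identity. So the rectangle $R_{\s^n w}$ effectively contributes only $H^s_{\s^n w, y_1} H^u_{y_0, \s^n w}$, reducing the right-hand side to
$$H^s_{z_1, z}\, \A^m(y_1)\, H^s_{\s^n w, y_1} H^u_{y_0, \s^n w}\, \A^n(x_0)\, H^u_{x, x_0}.$$

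Next, because $w \in \Wloc^u(x_0)$ (it lies in the same local unstable set as $x$, hence as $x_0$) and $\s^n x_0 = y_0$, equivariance of $H^u$ converts the middle block $H^u_{y_0, \s^n w} \A^n(x_0)$ into $\A^n(w) H^u_{x_0, w}$, after which the two surviving unstable factors compose to $H^u_{x, w}$. Symmetrically, because $\s^n w \in \Wloc^s(y_1)$ (by the definition of the bracket $\s^n w = [y_0, y_1]$), and $\s^m y_1 = z_1$, $\s^{n+m} w = \wt{w}$, equivariance of $H^s$ converts $\A^m(y_1) H^s_{\s^n w, y_1}$ into $H^s_{\wt{w}, z_1} \A^m(\s^n w)$; the two surviving stable factors then compose to $H^s_{\wt{w}, z}$. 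A single application of the cocycle equation $\A^m(\s^n w) \A^n(w) = \A^{n+m}(w)$ identifies what remains with the left-hand side.

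The argument is purely symbolic, so the only genuine issue is bookkeeping: one must check that each canonical holonomy appearing is actually defined. This reduces to noting that $\s^n w = [y_0, y_1] \in \Wloc^u(y_0) \cap \Wloc^s(y_1)$ by the definition of the bracket, and that $w \in \Wloc^u(x_0)$ because $x_0$ and $w$ both lie in $\Wloc^u(x)$; every other local stable/unstable membership then follows by applying the appropriate power of $\s$. The main obstacle, such as it is, will be keeping the long string of subscripts in order.
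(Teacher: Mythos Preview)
Your proposal is correct and essentially identical to the paper's proof: the paper likewise cancels the adjacent stable/unstable pairs inside $B_2 R_{\s^n w} B_1$, applies equivariance to turn $H^u_{y_0,\s^n w}\A^n(x_0)H^u_{x,x_0}$ into $\A^n(w)H^u_{x,w}$ and $H^s_{z_1,z}\A^m(y_1)H^s_{\s^n w,y_1}$ into $H^s_{\wt{w},z}\A^m(\s^n w)$, and finishes with the cocycle equation. The only cosmetic difference is that the paper organizes the computation by treating the two halves of the rectangle separately before combining them.
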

\begin{proof}
Using the properties of the canonical holonomies, we have
$$
H^u_{y_0,\s^n w}H^s_{y,y_0}B_1 = H^u_{y_0,\s^n w}\A^n(x_0)H^u_{x,x_0} = \A^n(w)H^u_{x,w}.
$$
Likewise, we have $B_2H^u_{y_1,y}H^s_{\s^n w,y_1} = H^s_{\wt{w},z}\A^m(\s^nw)$, and putting these two equations together completes the proof.
\end{proof}

\subsection{Transversality from 1-typicality}
Bochi and Garibaldi \cite{bochi2019extremal} showed that typical cocycles are spannable, and hence, uniformly spannable. It then follows that such cocycles are uniformly transverse. We, however, will prove the same result using a different method that makes use of the typicality assumption more directly.

\begin{thm}\label{thm: 1-transversality}
Let $\A \colon \Sig \to \glr$ be a 1-typical cocycle. Then $\A$ is uniformly transverse: there exist $N_1\in \N$ and $\ep_1>0$ such that for any $x,y\in \Sig$, a nonzero vector $v\in \R^d$, and a hyperplane $V\subseteq \R^d$, there exists a path $B:=B_{x,y}$ of length at most $N_1$ satisfying $$ \BB\v \not\in\CC(\vv,\ep_1).$$ 
\end{thm}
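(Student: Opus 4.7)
The plan is to reduce, using mixing and the canonical holonomies, to the construction of a loop cocycle at the fixed point $p$ whose projective action has the form $\PP^{b}\Psi\PP^{a}$ with uniformly bounded exponents $a,b$. By primitivity of $T$ with mixing rate $M$, for any $x,y\in\Sig$ one builds a path $B_{1}\colon x\to p$ and a path $B_{2}\colon p\to y$, each of length at most $M$. Compactness of $\Sig$ forces $\|B_{i}^{\pm 1}\|$ to be uniformly bounded, and hence by \eqref{eq: rho and norm} so are the $\rho$-norms of $\BB_{i}^{\pm 1}$. Setting $\v':=\BB_{1}\v$ and $\vv'':=\BB_{2}^{-1}\vv$, the desired transversality $\BB\v\notin\CC(\vv,\ep_{1})$ reduces, after propagating the bound forward through $\BB_{2}$, to a uniform lower bound on $\rho(\PP^{b}\Psi\PP^{a}\v',\vv'')$.

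I will realize the loop as a single orbit segment obtained by concatenating five atomic paths: $B_{1}$, then staying at $p$ for $a$ iterates (cocycle $P^{a}$), then the homoclinic excursion $p\to z\to \s^{\ell}z\to p$ which by \eqref{eq: psi ell} contributes $P^{\ell}\psi_{z}$, then staying at $p$ for $b-\ell$ further iterates (cocycle $P^{b-\ell}$), and finally $B_{2}$. Iterated application of Lemma \ref{lem: connect} produces a single path from $x$ to $y$ carrying the cocycle $B_{2}P^{b}\psi_{z}P^{a}B_{1}$ and of total length at most $2M+2N+\ell=:N_{1}$. At every junction the rectangle $R$ collapses to the identity: because one side of each join is a stay-at-$p$ path, one of the two arguments of the bracket $[y_{0},y_{1}]$ equals $p$ exactly, so the bracket degenerates to the other argument and the four holonomy factors in $R$ cancel in pairs.

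For the choice of exponents, I apply Lemma \ref{lem: powers of P} to $P$ to obtain $a\in[0,N]$ with $\PP^{a}\v'\in\CC(\v_{i},\ep)$ for some $i$, and the same lemma applied to the covector action of $P$ (i.e., to $P^{*}$, which inherits the pinching with eigen-covectors dual to the hyperplanes $\WW_{j}$) to obtain $b\in[\ell,\ell+N]$ such that the hyperplane $\PP^{-b}\vv''$ lies within projective distance $\ep$ of $\ww_{j}$ for some $j$; invoking the lemma on $\PP^{-\ell}\vv''$ rather than on $\vv''$ ensures that $b\geq\ell$. The twisting condition with $I=\{i\}$ and $J=\{1,\ldots,d\}\setminus\{j\}$ forces $\psi_{z}(v_{i})\notin\WW_{j}$ for every pair, so $\delta:=\min_{i,j}\rho(\Psi\v_{i},\ww_{j})>0$. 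The triangle inequality combined with \eqref{eq: rho and cones} then yields
\[
\rho(\Psi\PP^{a}\v',\PP^{-b}\vv'')\geq \delta-(\|\Psi\|_{\rho}+1)\ep\geq \delta/2
\]
once $\ep$ is fixed small enough. Propagating this lower bound through $\PP^{b}$ and $\BB_{2}$, whose inverse $\rho$-norms are uniformly bounded on the bounded parameter range, gives $\rho(\BB\v,\vv)\geq\ep_{1}$ for a uniform $\ep_{1}>0$. I expect the main obstacle to be the careful bookkeeping that every rectangle in the iterated Lemma \ref{lem: connect} composition is trivial, together with the small observation that Lemma \ref{lem: powers of P} legitimately transfers to the covector action of $P$; both are routine once stated explicitly but are essential to get a clean closed form for the loop cocycle.
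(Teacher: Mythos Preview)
Your overall architecture is sound and in some ways cleaner than the paper's (a single homoclinic insertion $\PP^{b}\Psi\PP^{a}$ at the fixed point, with the hyperplane handled by the covector action of $P$ rather than by passing to the inverse cocycle and $\A^{\wedge d-1}$ as the paper does). However, the claim that \emph{every} rectangle in the iterated application of Lemma~\ref{lem: connect} collapses to the identity is incorrect, and this is the one place where you said you expected the bookkeeping to be delicate.

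Here is the problem. After you concatenate $B_{1}$ with the stay-at-$p$ segment of length $a$, the resulting path from $x$ to $p$ terminates via the point $y_{0}=\sigma^{a}x_{1}\in\Wloc^{s}(p)$, which is \emph{not} equal to $p$ (your $B_{1}$ is produced only from mixing, so $x_{1}$ is merely in $\Wloc^{s}(p)$). The next segment, the homoclinic excursion, starts via $y_{1}=z\in\Wloc^{u}(p)$, also not equal to $p$. Hence $[y_{0},y_{1}]=[\sigma^{a}x_{1},z]$ coincides with neither argument, and the rectangle $R_{[\sigma^{a}x_{1},z]}$ through opposite corners $p$ and $[\sigma^{a}x_{1},z]$ has four genuinely distinct vertices and is nontrivial. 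The symmetric join at the other end (between $\sigma^{b}z$ and the initial point of $B_{2}$ on $\Wloc^{u}(p)$) has the same defect. Inserting stay-at-$p$ buffers helps only when one of the two paths being joined \emph{itself} passes through $p$; once you have already concatenated, the endpoint is pushed off $p$ along the stable leaf and the trick no longer applies. No ordering of the joins avoids at least one nontrivial rectangle.

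The repair is routine but must be made explicit: lengthen $B_{1}$ so that $d(x_{1},p)\le 2^{-m}$ for $m=m(\delta/2)$ as in Lemma~\ref{lem: rectangle} (and symmetrically for $B_{2}$), and then absorb the two nontrivial rectangles into the $\ep$-slack of your cone estimates exactly as the paper does in the proof of Lemma~\ref{lem: 1}. With that correction the cocycle over the final path is $B_{2}\,R'\,P^{b}\psi_{z}\,R\,P^{a}B_{1}$ with $\RR,\RR'$ each moving directions by at most $\delta/2$, and your transversality estimate $\rho(\Psi\PP^{a}\v',\PP^{-b}\vv'')\ge\delta/2$ still propagates to the desired conclusion.
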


Prior to beginning the proof, we fix a few constants. First, the twisting assumption on the holonomy loop $\psi:=\psi_z$ ensures that all coefficients $c_{i,j}$ from the expansion
$$\psi v_i  = \sum\limits_{j=1}^d c_{i,j}v_j$$
are nonzero for any $1\leq i,j \leq d$. In particular, we can choose $\d>0$ depending only on $\A$ such that $\Psi:=\mathbb{P}\psi$ satisfies 
\begin{equation}\label{eq: delta}
\Psi\Big(\bigcup\limits_{i=1}^d \CC(\v_i,\delta)\Big) \subseteq \Big(\bigcup\limits_{i=1}^d\CC(\ww_i,\delta)\Big)^c
\end{equation}
where $\ww_i := \P(\mathbb{W}_i)$.
Note from the pinching assumption on $P$, applying powers of $\PP$ maps $\Big(\bigcup\limits_{i=1}^d\CC(\ww_i,\delta)\Big)^c$ close to $\v_1$. In particular, for any $\ep>0$ we can choose $\ell(\ep) \in \N$ large enough such that 	
\begin{equation}\label{eq: 1}
\PP^\ell \Psi\Big( \bigcup\limits_{i=1}^d \CC(\v_i,\delta)\Big) \subseteq\CC(\v_1,\ep)
\end{equation}
for any $\ell \geq \ell(\ep)$.

In the proof of Theorem \ref{thm: 1-transversality}, we will also make use of the inverse cocycle. The \textit{inverse cocycle} $\A^{-1}$ is a cocycle over $(\Sig,\s^{-1})$ defined by 
$$\A^{-1}(x) := \A(\s^{-1}x)^{-1}.$$
In particular, the iteration of $\A^{-1}$ is then given by $\A^{-n}(x) = \A^n(\s^{-n}x)^{-1}$. 

\begin{rem}\label{rem: inverse}
Many properties of $\A$ are inherited to the inverse cocycles $\A^{-1}$. For instance, $\A^{-1}$ is fiber-bunched if and only if $\A$ is. Moreover, denoting the canonical holonomies of $\A^{-1}$ by $H^{s/u,-}$, we have $H^{s/u,-}_{x,y} = H^{u/s}_{x,y}$ from their definitions. Noticing also that the inverse of $H^{s/u}_{x,y}$ is $H^{s/u}_{y,x}$, we have 
$$\psi_z^{-} = H^{s,-}_{z,p}\circ H^{u,-}_{p,z} = H^u_{z,p} \circ H^s_{p,z}  = (H^s_{z,p} \circ H^u_{p,z})^{-1}=(\psi_z)^{-1}.$$

It then follows that $\A^{-1}$ is typical if and only if $\A$ is. This is because the pinching assumption holds for $P$ if and only if it holds for $P^{-1}$. Moreover, given any two index sets $I,J \in \{1,\ldots,d\}$ with $|I|+|J| \leq d$,  the set of vectors
$$\{\psi_z^-(v_i) \colon i \in I\} \cup \{v_j \colon j \in J\}$$ is linearly independent if and only if
$$\{v_i \colon i \in I\} \cup \{\psi_z(v_j) \colon j \in J\}$$ is linearly independent. But then the latter set is linearly independent if $\A$ is twisting. This shows that $\A$ is typical if and only if $\A^{-1}$ is.
\end{rem}

\begin{proof}[Proof of Theorem \ref{thm: 1-transversality}]
The proof resembles that of \cite[Theorem 4.1]{park2020quasi} closely. Throughout the proof, refer to Figure \ref{fig1} for a visual representation of the proof.
Let $x,y \in \Sig$, $v\in \R^d$, and $V \subset \R^d$ be given.
We begin by building a path from $x$ to $p$ with some extra properties:

\begin{lem}\label{lem: 1}
 For any $\ep>0$ and $k\in \N$, there exists $N \in \N$ with the following property: for any $x\in \Sig$ and $v\in \R^d$, there exists a path $B_{x,p}:=H^s_{x_1,p}\A^{n(x)}H^u_{x,x_0}$ where $x_1:=\s^{n(x)}x_0$ such that its length $n(x)$ is at most $N$, the distance $d(x_1,p)$ is at most $2^{-k}$, and that 
$$\rho(\BB_{x,p}\v,\v_1) <\ep.$$
\end{lem}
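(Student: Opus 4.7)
The plan is to build $B_{x,p}$ as a single long orbit whose cocycle factorizes, up to a small rectangle error, as $P^{\ell_0+k-1}\cdot\psi_z\cdot P^a\cdot B_0$, where $B_0$ is a short mixing path from $x$ into a small neighborhood of $p$ and the exponents $a$ and $\ell_0$ are dictated by the pinching and twisting data from typicality, so as to steer $\BB_{x,p}\v$ into $\CC(\v_1,\ep)$.

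Using the mixing rate $M$ of $\Sig$, first pick $x_0\in\Wloc^u(x)$ whose orbit reaches some $y_0\in\Wloc^s(p)\cap[p]_k$ within $m_0\leq M+k-1$ steps (the last $k-1$ symbols of the segment being forced to $p_0$ in order to ensure $d(y_0,p)\leq 2^{-k}$). This produces $B_0:=H^s_{y_0,p}\A^{m_0}(x_0)H^u_{x,x_0}$, and since $y_0\in\Wloc^s(p)$, the $\A$-equivariance of $H^s$ shows that appending $a$ further fixed-point copies of $p_0$ to the orbit left-multiplies the cocycle by exactly $P^a$. Applying Lemma \ref{lem: powers of P} with tolerance $\delta/2$ then yields some $a\leq N_0:=N(\delta/2)$ such that $\PP^a\BB_0\v\in\CC(\v_i,\delta/2)$ for some $i$.

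Next, I would splice in the orbit loop $p\to z\to\s^{\ell_0}z\to p$, whose cocycle equals $P^{\ell_0}\psi_z$ by \eqref{eq: psi ell}. Lemma \ref{lem: connect} produces the combined cocycle $P^{\ell_0}\psi_z\cdot R\cdot P^a B_0$, where $R$ is the rectangle through $p$ and the junction point $[\s^a y_0,z]$. One pair of opposite edges of this rectangle has length at most $2^{-k}$, since $\s^a y_0\in[p]_k$ and $z\in\Wloc^u(p)$; hence by \eqref{eq: R_q} and Lemma \ref{lem: rectangle}, taking $k$ sufficiently large forces $\RR$ to distort every $(\delta/2)$-neighborhood by at most $\delta/2$. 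Consequently $\RR\PP^a\BB_0\v$ still lies in $\CC(\v_i,\delta)$; the twisting estimate \eqref{eq: delta} then pushes $\Psi\RR\PP^a\BB_0\v$ out of $\bigcup_j\CC(\ww_j,\delta)$, and the pinching estimate \eqref{eq: 1} with any $\ell_0\geq\ell(\ep)$ sends the result into $\CC(\v_1,\ep)$. Finally, to enforce $d(x_1,p)\leq 2^{-k}$, I would append $k-1$ more fixed-point steps: the same equivariance identity left-multiplies by $P^{k-1}$, a map that preserves $\CC(\v_1,\ep)$, while the trailing $p_0$'s make the past coordinates of the new endpoint agree with $p$ in the window $-k+1,\ldots,-1$, placing it in $[p]_k\cap\Wloc^s(p)$. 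The total length $n(x)\leq M+N_0+\ell(\ep)+2(k-1)$ depends only on $\ep$ and $k$, giving the desired bound $N$.

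The main obstacle is the rectangle error $R$ arising from Lemma \ref{lem: connect}: without care it would undo the alignment toward $\v_1$ achieved by the $P^a$, $\psi_z$, and $P^{\ell_0}$ factors. Controlling $R$ is exactly what forces the initial mixing segment to land within $2^{-k}$ of $p$, which in turn accounts for the $O(k)$ overhead in the length bound; the rest of the argument is a careful bookkeeping of how the ideal composition $P^{\ell_0+k-1}\psi_z P^a B_0$ is realized by a single orbit, with $R$ treated as an $o(1)$ perturbation on $\P^{d-1}$.
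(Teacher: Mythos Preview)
Your construction is essentially the paper's: mix from $x$ into a neighborhood of $p$, apply $P^a$ via Lemma~\ref{lem: powers of P} to land in some $\CC(\v_i,\delta/2)$, splice in the homoclinic loop so that the path cocycle factors as $P^{\ell}\psi_z\,R\,P^aB_0$, and invoke \eqref{eq: 1} to finish in $\CC(\v_1,\ep)$. The only structural difference is bookkeeping for where the $k$-dependence enters: the paper fixes the first landing precision at $2^{-m}$ with $m=m(\delta/2)$ (a cocycle constant, independent of $k$) and then secures $d(x_1,p)\leq 2^{-k}$ simply by taking $\ell$ so large that $d(\sigma^\ell z,p)\leq 2^{-k}$; you instead load the $k$-dependence onto the initial landing and onto $k-1$ trailing $P$-steps.

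There is, however, one genuine slip. You write ``taking $k$ sufficiently large forces $\RR$ to distort every $(\delta/2)$-neighborhood by at most $\delta/2$'', but $k$ is \emph{given} in the statement of the lemma and may well equal $1$; in that case your rectangle is not controlled and the inclusion $\RR\PP^a\BB_0\v\in\CC(\v_i,\delta)$ can fail. The fix is immediate and is exactly what the paper does: in the first mixing step land within $2^{-k'}$ of $p$ with $k':=\max\{k,m(\delta/2)\}$ (equivalently, observe that proving the lemma for $k'\geq k$ is formally stronger), so that the rectangle control is governed by the cocycle-dependent constant $m(\delta/2)$ rather than by the user-supplied $k$. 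Two smaller points: you should also require $\sigma^{\ell_0}z\in\Wloc^s(p)$, not just $\ell_0\geq\ell(\ep)$, for \eqref{eq: psi ell} to apply; and the claim that $\PP^{k-1}$ preserves $\CC(\v_1,\ep)$ needs $\ep$ small relative to the eigenbasis of $P$. Both are harmless once noted, since the threshold for $\ell_0$ is a fixed constant and one may assume $\ep$ small without loss of generality.
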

\begin{proof}
Let $\ep>0, k\in \N,  x\in \Sig$ and $v\in \R^d$ be given.
From the mixing property of the subshift $(\Sig,\s)$, there exists $M\in \N$ such that we can find $w_0 \in \Wloc^u(x)$ and $w_1:=\s^{M}w_0 \in \Wloc^s(p)$. Setting $m:=m(\d/2)$ from Lemma \ref{lem: rectangle},  by increasing $M$ if necessary, we may assume that $d(w_1,p) \leq 2^{-m}$ by replacing $w_1$ by $\s^{m}w_1$.  Since $\d$ depends only on the cocycle,  the (possibly increased) constant $M$ also depends only on the cocycle.

Applying Lemma \ref{lem: powers of P} to $\u:=\HH^s_{w_1,p}\AA^{M}(w_0)\HH^u_{x,w_0}\v$ gives $a \in [0,N']$ where $N':=N(\d/2)$ such that 
\begin{equation}\label{eq: v control}
\PP^a\u=H^s_{\s^aw_1,p}\AA^{M+a}(w_0)\HH^u_{x,w_0}\v \in \CC(\v_i,\d/2)
\end{equation}
 for some $1 \leq i \leq d$. 

Fix $\ell \in \N$ such that $\ell \geq \ell(\ep)$ where $\ell(\ep)$ is from \eqref{eq: 1} and that $d(\s^{\ell} z,p) \leq 2^{-k}$.
Now considering $\wt{x}_0:=[\s^a w_1,z]$ and $x_0:=\s^{-M-a}\wt{x}_0 \in \Wloc^u(x)$, Lemma \ref{lem: connect} allows us to connect two paths $H^s_{\s^aw_1,p}\A^{M+a}(w_0)H^u_{x,w_0}$ and $H^s_{\s^\ell z,p}\A^\ell(z)H^u_{p,z}$: setting $n(x):=M+a+\ell$ and $x_1:=\s^{n(x)}x_0$, we have
\begin{align*}
B_{x,p}:=H^s_{x_1,p}\A^{n(x)}(x_0)H^u_{x,x_0}&=\Big(H^s_{\s^\ell z,p}\A^\ell(z)H^u_{p,z}\Big)R_{\wt{x}_0}\Big(H^s_{\s^aw_1,p}\A^{M+a}(w_0)H^u_{x,w_0}\Big)\\
&=P^\ell \psi R_{\wt{x}_0}H^s_{\s^aw_1,p}\A^{M+a}(w_0)H^u_{x,w_0},
\end{align*}
where $R_{\wt{x}_0}$ is defined as in \eqref{eq: R}. Note that we used the identity \eqref{eq: psi ell} in the second equality. 
Denoting this new path by $B_{x,p}$, its length $n(x)$ is bounded above by $N:=M+N'+\ell$, and we have $\BB_{x,p}\v = \PP^\ell \Psi \RR_{\wt{x}_0}\PP^a \u$ from the definition of $\u$.

\begin{figure}[H]
\caption{}
\includegraphics[width=14cm]{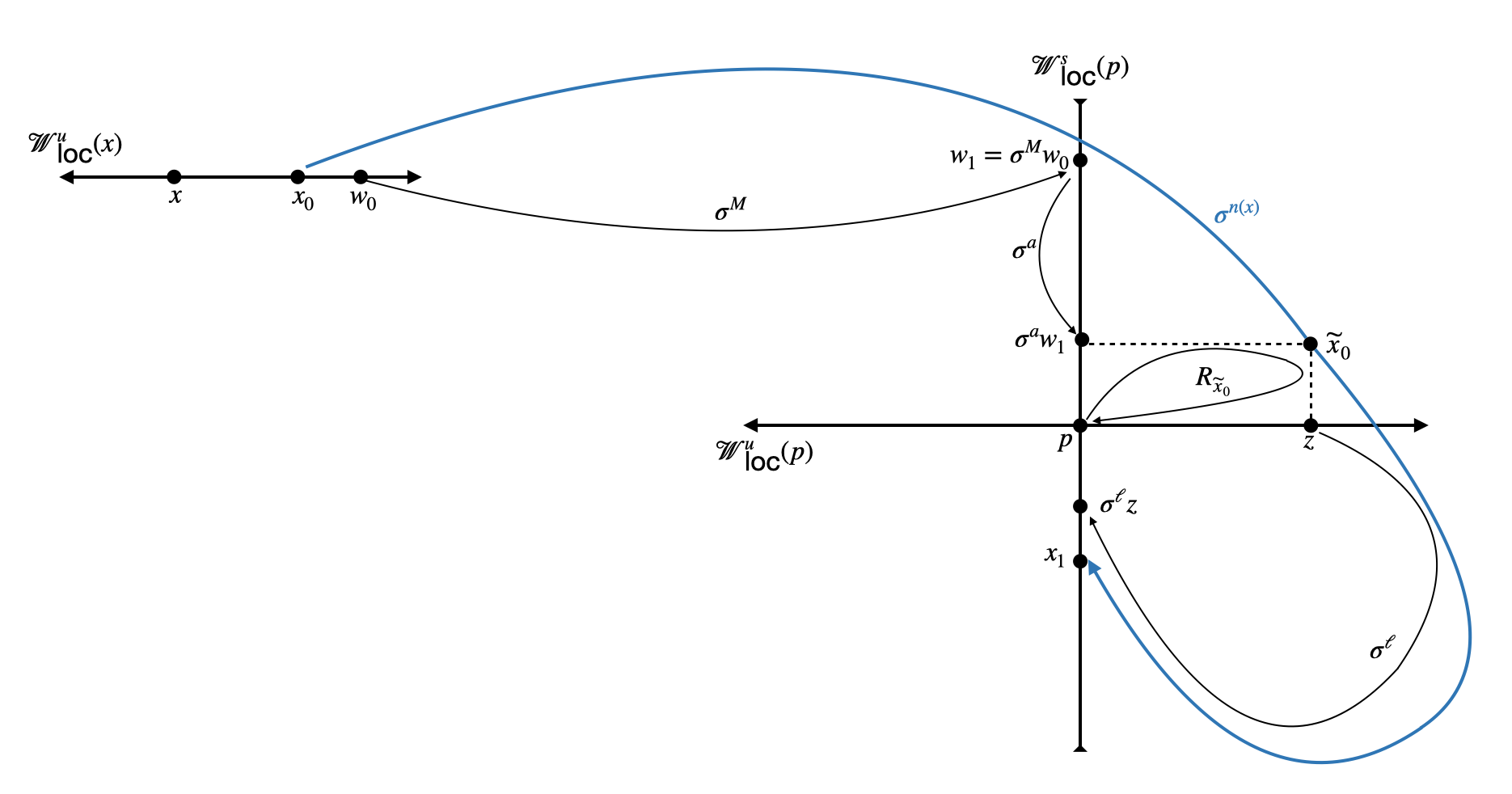}
\label{fig1}
\centering
\end{figure}

Note that one of the legs of $R_{\wt{x}_0}$ is the edge connecting $\s^aw_1$ and $p$, and its length is at most $2^{-m}$ from the assumption that $d(w_1,p) \leq 2^{-m}$. The choice of $m = m(\d/2)$ from Lemma \ref{lem: rectangle} then ensures that $\RR_{\wt{x}_0}$ does not move any direction off itself by more than $\d/2$ in angle. 
 Since we have $\PP^a\u \in \CC(\v_i,\d/2)$ for some $i$ from \eqref{eq: v control}, this implies that $\RR_{\wt{x}_0} \PP^a\u \in \CC(\v_i,\d)$. Then the choice \eqref{eq: 1} of $\ell$ ensures that
$$\BB_{x,p}\v = \PP^\ell \Psi \RR_{\wt{x}_0}\PP^a \u \in \CC(\v_1,\ep).$$
Lastly, noting that $d(x_1,p) =d(\s^\ell z,p) \leq 2^{-k}$, this completes the proof.
\end{proof}

Recalling that $\mathbb{W}_d = \text{span}(v_1,\ldots, v_{d-1})$ and that $v_1\wedge \ldots \wedge v_{d-1}$ is the attracting fixed point of $P^{\wedge d-1}$, the above lemma can also be applied to hyperplanes:
\begin{lem}\label{lem: 2}
 For any $\ep>0$ and $k\in \N$, there exists $N \in \N$ with the following property: for any point $x\in \Sig$ and hyperplane $V\subseteq \R^d$, there exists a path $B_{x,p}:=H^s_{x_1,p}\A^{n(x)}H^u_{x,x_0}$ where $x_1:=\s^{n(x)}x_0$ such that its length $n(x)$ is at most $N$, the distance $d(x_1,p)$ is at most $2^{-k}$, and that 
$$\rho(\BB_{x,p}\mathsf{V},\ww_d) <\ep.$$
\end{lem}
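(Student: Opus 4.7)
The plan is to deduce Lemma \ref{lem: 2} from Lemma \ref{lem: 1} applied to the exterior power cocycle $\A^{\wedge d-1}$, under the identification of a hyperplane $V\subseteq \R^d$ with the line $\wedge^{d-1}V\subseteq \wedge^{d-1}\R^d$. Under this identification a path cocycle $B_{x,p}$ for $\A$ acts on hyperplanes in the same way that $B_{x,p}^{\wedge d-1}$ acts on lines in $\wedge^{d-1}\R^d$, and the canonical holonomies $H^{s/u}$ for $\A$ induce continuous canonical holonomies $(H^{s/u})^{\wedge d-1}$ for $\A^{\wedge d-1}$, even though $\A^{\wedge d-1}$ need not itself be fiber-bunched.

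To apply the proof of Lemma \ref{lem: 1} to $\A^{\wedge d-1}$ I would verify only those consequences of 1-typicality that are actually used there, namely pinching of $P^{\wedge d-1}$ together with the $|I|=1,\,|J|=d-1$ case of twisting for $\psi_z^{\wedge d-1}$, which is exactly what drives the analog of \eqref{eq: delta}. Pinching is immediate since the eigenvalues of $P^{\wedge d-1}$ are $\prod_{i\neq k}\eig_i(P)$ for $k=1,\ldots,d$, and these have distinct moduli because the $|\eig_i(P)|$ do.

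For the required twisting, let $C=(c_{i,j})$ denote the matrix of $\psi_z$ in the basis $\{v_1,\ldots,v_d\}$. A direct computation shows that the coefficient of $v_1\wedge\cdots\wedge\hat{v}_j\wedge\cdots\wedge v_d$ in the expansion of $\psi_z^{\wedge d-1}(v_1\wedge\cdots\wedge\hat{v}_k\wedge\cdots\wedge v_d)$ is, up to sign, the $(j,k)$-cofactor of $C$. Hence nonvanishing of every such coefficient is equivalent to nonvanishing of every entry of $C^{-1}$. But $C^{-1}$ is exactly the matrix of the holonomy loop $\psi_z^{-1}$ of $\A^{-1}$ in the same basis, and by Remark \ref{rem: inverse} the cocycle $\A^{-1}$ is again 1-typical at the pair $(p,z)$; its own twisting condition with $|I|=1,\,|J|=d-1$ states precisely that every entry of $C^{-1}$ is nonzero. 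This resolves what I view as the main obstacle of the proof: 1-typicality of $\A$ does not give the full 1-typicality of $\A^{\wedge d-1}$, but it does produce exactly the minimal amount that the proof of Lemma \ref{lem: 1} requires.

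With pinching and the appropriate twisting in place for $\A^{\wedge d-1}$, I would run the argument of Lemma \ref{lem: 1} verbatim on $\A^{\wedge d-1}$, feeding in $\wedge^{d-1}V$ in place of $v$: the mixing step yields a bounded-length path from $x$ towards $p$; an intermediate power of $P^{\wedge d-1}$ brings the image close to some eigenvector $v_1\wedge\cdots\wedge\hat{v}_k\wedge\cdots\wedge v_d$; the trip through $z$ contributes a factor of $\psi_z^{\wedge d-1}$ which, by the twisting verified above, moves the result off every $P^{\wedge d-1}$-invariant hyperplane in $\P(\wedge^{d-1}\R^d)$; and a sufficiently large final power $\ell$ of $P^{\wedge d-1}$ attracts the output to its dominant eigenvector $v_1\wedge\cdots\wedge v_{d-1}$, the representative of $\WW_d$. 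The $\rho$-estimate from Lemma \ref{lem: rectangle} applies unchanged to $(H^{s/u})^{\wedge d-1}$, so the bound on the length $n(x)$, the shadowing condition $d(x_1,p)\leq 2^{-k}$, and the angular conclusion $\rho(\BB_{x,p}\mathsf{V},\ww_d)<\ep$ all transfer back to the original cocycle $\A$.
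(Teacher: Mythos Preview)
Your proposal is correct and follows the same overall strategy as the paper: reduce to Lemma \ref{lem: 1} for $\A^{\wedge d-1}$, check pinching of $P^{\wedge d-1}$, and verify just enough twisting for $\psi_z^{\wedge d-1}$ to make the analogue of \eqref{eq: delta} go through.

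The only difference is in how you verify that every coefficient in the expansion $\psi_z^{\wedge d-1}u_k=\sum_j c_{k,j}\,u_j$ is nonzero. You identify these coefficients with cofactors of the matrix $C$ of $\psi_z$, hence with entries of $C^{-1}$, and then invoke Remark \ref{rem: inverse} (1-typicality of $\A^{-1}$) with $|I|=1$, $|J|=d-1$. The paper instead wedges $\psi_z^{\wedge d-1}u_i$ with $v_j$ to isolate $c_{i,j}$ and observes directly that $v_j\wedge\psi v_1\wedge\cdots\wedge\widehat{\psi v_i}\wedge\cdots\wedge\psi v_d\neq 0$ is exactly the twisting condition for $\psi_z$ with $|I|=d-1$, $|J|=1$. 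Since Remark \ref{rem: inverse} itself reduces twisting for $\psi_z^{-1}$ with $|I|=1$ back to twisting for $\psi_z$ with $|J|=1$, the two verifications are equivalent; the paper's is a touch more direct because it avoids the detour through cofactors and the inverse cocycle.
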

\begin{proof}
We will explain here how the arguments from Lemma \ref{lem: 1} above can also be applied to the exterior product cocycle $\A^{\wedge d-1}$. First, it is clear that $P^{\wedge d-1}$ has simple eigenvalues of distinct moduli; its eigenvectors are $\{u_i\}_{1 \leq i \leq d}$ where $u_i:=v_1 \wedge \ldots \wedge v_{i-1} \wedge v_{i+1} \wedge \ldots \wedge v_{d}$. This verifies the pinching assumption for $\A^{\wedge d-1}$.

As for the twisting assumption, notice from the proof of Lemma \ref{lem: 1} that the only use of the twisting assumption was \eqref{eq: delta}, which ensured that vectors sufficiently close to the eigenvectors $v_i$ of $P$ can be twisted by $\psi$ away from each hyperplane $W_j$. 

The same can be shown to be true for the eigenvectors $\{u_i\}_{1\leq i \leq d}$. Indeed, if we write 
$$\psi^{\wedge d-1}u_i= \sum\limits_{j=1}^d c_{i,j} u_j,$$
 then we can find $c_{i,j}$ by taking the exterior product with $v_j$ on each side. Since $v_j \wedge \psi^{\wedge d-1}u_i=
v_j \wedge (\psi v_1 \wedge \ldots  \wedge \psi v_{i-1} \wedge\psi v_{i+1} \wedge \ldots \wedge \psi v_{d})$ is non-zero for each $i$ and $j$ from the twisting assumption (see Definition \ref{defn: typical}), it follows that $c_{i,j}$ is nonzero for all $i$ and $j$. This means that $\psi^{\wedge d-1}$ satisfies the twisting condition analogous to \eqref{eq: delta} for the eigenvectors $\{u_i\}_{1\leq i \leq d}$ of $P^{\wedge d-1}$. Hence, the same argument as in Lemma \ref{lem: 1} applies when a vector $v$ is replaced by a hyperplane $V$.
\end{proof}

Continuing on with the proof of Theorem \ref{thm: 1-transversality},  consider the angle $\eta:=\rho(\v_1,\ww_1) >0$. By setting $\ep := \eta/4$ and $k := m(\eta/4)$ and applying Lemma \ref{lem: 1}, we obtain $N \in \N$ and a path 
$$B_{1}:= H^s_{x_1,p}\A^{n(x)}(x_0)H^u_{x,x_0}$$
 of length at most $N$ such that $d(x_1,p) \leq 2^{-m(\eta/4)}$ and that $$\rho(\BB_1\v,\v_1) <\eta/4.$$

We now apply Lemma \ref{lem: 2} to the inverse cocycle $\A^{-1}$ with the same constants $\ep,k$ to obtain a path $B_2^{-}$ from $y$ to $p$ such that 
$$\rho(\BB_2^-\vv,\ww_1)<\eta/4;$$
 note that we have used the fact that $v_2\wedge \ldots \wedge v_d$ is the attracting fixed point of $(P^{\wedge d-1})^{-1}$ and that $\WW_1 = \text{span}(v_2,\ldots,v_d)$. 
By increasing $N$ if necessary (to the constant $N$ obtained from Lemma \ref{lem: 2}), we may assume that the length of $B_2^{-}$ is also bounded above by $N$. Since the local stable holonomies of $\A^{-1}$ coincide with the local unstable holonomies of $\A$, the path $B_2^{-}$ may be written as $y \xrightarrow{H^{s}} y_0 \xrightarrow{\s^{-n(y)}} y_1 \xrightarrow{H^{u}} p$ for some $n(y) \leq N$. Here $H^{s/u}$ are the canonical holonomies of the cocycle $\A$.

We now consider a path $B_2$ from $p$ to $y$ defined as the inverse of $B_2^-$. In particular, we have
$$B_2:=H^s_{y_0,y}\A^{n(y)}(y_1)H^u_{p,y_1}$$ where $y_1 \in \Wloc^u(p)$ with $d(y_1,p) \leq 2^{-m(\eta/4)}$ and $y_0:=\s^{n(y)}y_1 \in \Wloc^s(y)$. 

By setting $r:=\s^{-n(x)}[x_1,y_1]$ and $\wt{r}:=\s^{n(x)+n(y)}(r)$, Lemma \ref{lem: connect} allows us to create a new path $B$ by joining $B_1$ and $B_2$:
$$B:=H^s_{\wt{r},y}\A^{n(x)+n(y)}(r)H^u_{x,r}=B_2 R_{\s^{n(x)}r}B_1.$$
Note that the length of $B$ is uniformly bounded above by $2N$.
Since $d(x_1,p) \leq 2^{-m(\eta/4)}$, Lemma \ref{lem: rectangle} ensures that $\RR_{\s^{n(x)}r}$ does not move any direction off itself by more than $\eta/4$ in angle, and we have $$\rho(\RR_{\s^{n(x)}r}\BB_1\v,\v_1) \leq \rho(\BB_1\v,\v_1)+\eta/4 \leq \eta/2.$$ In particular, we have 
\begin{align*}
\rho(\BB_{2}^{-1}\vv,\RR_{\s^{n(x)}r}\BB_1\v) &\geq \rho(\v_1,\ww_1)-\rho(\v_1,\RR_{\s^{n(x)}r}\BB_1\v)-\rho(\BB_2^{-1}\vv,\ww_1) \\
& \geq \eta - \eta/2-\eta/4 = \eta/4.
\end{align*}
Since the length of $B_2$ is uniformly bounded above by $N$, we can choose $\ep_1>0$ such that
$\rho(\BB\v,\vv) \geq \ep_1.$ By setting $N_1:=2N$, we complete the proof of Theorem \ref{thm: 1-transversality}.
\end{proof}

\begin{rem}
In \cite[Theorem 4.1]{park2020quasi}, a similar construction appears to establish quasi-multiplicativity of typical cocycles. Compared to our proof above, the difference is that instead of the inverse cocycle $\A^{-1}$ the proof there uses the adjoint cocycle $\A_*$. 

The reason for such a difference is that \cite{park2020quasi} aims to establish a different property called the quasi-multiplicativity. In establishing quasi-multiplicativity of $\A$, it is important to avoid orthogonality between the controlled directions. Hence, it is advantageous to work with the adjoint cocycles $\A_*$ there because $\rho(\mathsf{A}\u,\v) \neq \pi/2$ if and only if $\rho(\u,\mathsf{A}^*\v) \neq \pi/2$. 

On the other hand, for Theorem \ref{thm: 1-transversality}, we had to ensure that the angles between the controlled directions and hyperplanes are bounded away from zero. Hence, the inverse cocycle $\A^{-1}$ was used (instead of the adjoint cocycle $\A_*$) because $\rho(\mathsf{A}\u,\vv) \neq 0$ if and only if $\rho(\u,\mathsf{A}^{-1}\vv)\neq 0$. In fact, the adjoint cocycle $\A_*$ would not have worked for this proof.
\end{rem}

\subsection{The outline and the choice of constants for Theorem \ref{thm: model thm}}
We will now prove Theorem \ref{thm: model thm}. The goal is to construct a periodic point $q \in \Sig$ such that $\mathsf{A}^{n_q}(q)$ maps a cone near $\v_1$ into itself. We will control the $\rho$-norm of $\mathsf{A}^{n_q}(q)$ by using Proposition \ref{prop: AMS}, and apply Proposition \ref{prop: tits} to conclude that $\A^{n_q}(q)$ is proximal. But before starting the proof, let us provide a brief sketch first.

 Let $x\in\Sig$ and $n\in \N$ be given. Since 1-typicality data available to work with are the simple eigendirections of $P:=\A(p)$ associated to the distinguished fixed point $p\in\Sig$, we begin by constructing an orbit segment from a point in $\s^{-n}\Wloc^u(\s^n x)$ to a point in $\Wloc^s(p)$ whose length is uniformly comparable to $n$. Let the corresponding path be denoted by $g\in \glr$. 

Since $x$ and $n$ are arbitrary, it is fair to assume that we do not have any control on the $\rho$-norm of $\gsf$, except for Proposition \ref{prop: AMS} which applies to any matrix in $\glr$; let $\U_{g}$ be the hyperplane corresponding to $g$. 

We now build a path $B:=B_{p,x}$ from $p$ to $x$ while making sure that $v_1$ (the eigenvector of $P$ corresponding to the eigenvalue of the largest modulus) gets mapped uniformly away from $\U_{g}$. This is achieved using the spannability of $\A$, which follows from the transversality established in Theorem \ref{thm: 1-transversality}. We now have a path $gB$ from $p$ to $p$ with some control on its $\rho$-norm. It, however, does not necessarily map a cone $\CC$ around $\v_1$ into itself (i.e., we do not have control over $gBv_1$) yet. So we concatenate $gB$ to a path tracing the holonomy loop $\psi_z$, which will have the effect of applying $P^\ell \psi$ for some $\ell\in \N$ (see the proof of Lemma \ref{lem: 1}), such that the resulting path $\wt{B}$ maps $\CC$ strictly inside itself. This new path $\wt{B}$, although it begins and ends at $p$,  is not of the form $\A^{n_q}(q)$ for some periodic point $q$ yet. So we take the periodic orbit shadowing $\wt{B}$ and show that it has the desired properties listed in Theorem \ref{thm: model thm}. Throughout this entire process, we will have to ensure that the local holonomies that show up (such as $R_{\s^nw}$ from Lemma \ref{lem: connect}) do not destroy the desired properties of the path, and this translates to fine tuning of the parameters such as $\tau$. 

We now introduce relevant constants to be used in the proof.
Let $\ep_1>0$ and $N_1\in\N$ be the constant from Theorem \ref{thm: 1-transversality}, and let $c :=c(\ep_1/2)$ be from Proposition \ref{prop: AMS}. Also, set $$m:=m(\d/3) \text{ and } N_2:=N(\d/3)$$ from Lemma  \ref{lem: rectangle} and  \ref{lem: powers of P} where $\d$ is defined as in \eqref{eq: delta}. Since $\ep_1,N_1$, and $\d$ depend only on the cocycle $\A$, so do $c,m,$ and $N_2$.

Since $\Sig$ is compact, so is the image of $\A$. As the $\rho$-norm can be bounded above using an expression \eqref{eq: rho and norm} involving the operator norm and the conorm,
we can fix $D>0$ serving as a uniform upper bound on the $\rho$-norm of product of matrices of length bounded above by $\max\{N_1,N_2\}$. For instance, we will assume that
\begin{equation}\label{eq: D}
\max\{\|\PP^b\|_\rho, \|\BB\|_\rho,\|\RR\|_\rho\} \leq D
\end{equation}
for any $b \leq \max\{N_1,N_2\}$, and any path $B$ with length bounded  above by $\max\{N_1,N_2\}$, and any local holonomy rectangle $R$ defined as in \eqref{eq: R}. We will also use $D$ as an upper bound for the $\rho$-norm of finite ($\leq 3$ suffices for our purpose) composition of local holonomies. Such a bound will frequently appear in controlling the size of relevant cones via \eqref{eq: rho and cones}.

\subsection{Proof of Theorem \ref{thm: model thm}}\label{subsec: pf thm 3.1}

The beginning of the proof resembles that of Theorem \ref{thm: 1-transversality}. Let $x\in \Sig$ and $n\in \N$ be given. Using the mixing property of $\Sig$, we can find $y \in \s^{-n}\Wloc^u(\s^n x)$ and $n(x) \in \N$ such that $\wt{y}:=\s^{n(x)}y$ belongs to $\Wloc^s(p)$ and that the difference $|n(x)-n|$ is uniformly bounded (by the mixing rate $M \in \N$ of $\Sig$). 
By replacing $\wt{y}$ by $\s^m \wt{y}$ if necessary we may assume that $d(\wt{y},p)\leq 2^{-m}$. Note the difference $|n(x) -n|$ is still uniformly bounded above by some constant $N_0 \in \N$ depending only on the subshift and the cocycle. We set 
\begin{equation}\label{eq: g}
g:=H^s_{\wt{y},p}\A^{n(x)}(y)
\end{equation} and $\U_{g}$ be the hyperplane associated to $g$ from Proposition \ref{prop: AMS}.

The uniform transversality (Theorem \ref{thm: 1-transversality}) of $\A$ applied to $p,y\in \Sig$, $v_1 \in \R^d$, and $\U_{g} \subset \R^d$ gives a path $B:=B_{p,y} =H^s_{\s^{n(y)}b,y} \A^{n(y)}(b)H^u_{p,b}$ with $n(y) \in [0, N_1]$ such that 
$$\rho(\BB\v_1,\UU_{g}) \geq \ep_1.$$
\begin{figure}
\caption{}
\includegraphics[width=16cm]{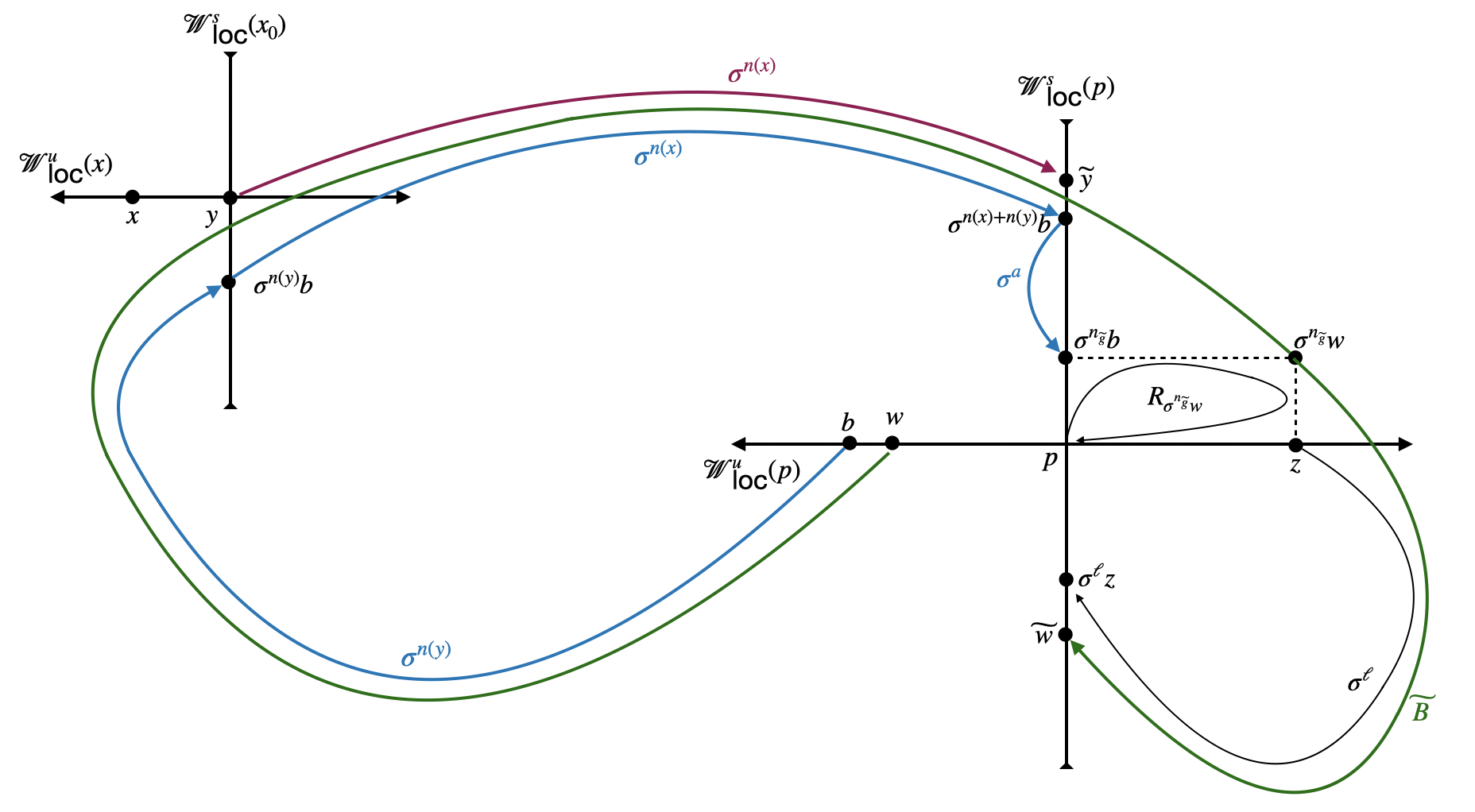}
\label{fig3}
\centering
\end{figure}

Now consider an arbitrary number $\tau>0$.
Assuming that $\tau$ is smaller than $\displaystyle \frac{\ep_1}{2D}$, then $\BB\CC(\v_1,\tau)$ which is a subset of $\CC(\BB\v_1,D\tau)$ belongs to $\CC(\BB\v_1,\ep_1/2)$; note that we have used \eqref{eq: rho and cones} and \eqref{eq: D} here. In particular, we have $\BB\CC(\v_1,\tau) \subseteq \CC(\UU_{g},\ep_1/2)^c$, and hence Proposition \ref{prop: AMS} applies to give
$$\gsf\BB\CC(\v_1,\tau) \subseteq \CC(\gsf\BB\v_1,cD\tau).$$
This gives us a uniform control on the image $\gsf\BB\CC(\v_1,\tau)$ despite the fact that $g$ is constructed with arbitrarily provided data $x$ and $n$. Note also that the concatenation $$gB = H^s_{\s^{n(x)+n(y)}b,p}\A^{n(x)+n(y)}(b)H^u_{p,b}$$ is still a path; see Figure \ref{fig3}.

We now apply Lemma \ref{lem: powers of P} to $\gsf\BB\v_1$ and obtain $a \in [0,N_2]$ such that $\PP^a(\gsf\BB\v_1)$ belongs to $\CC(\v_i,\d/3)$ for some $1\leq i\leq d$. From \eqref{eq: D}, applying $\PP^a$ to the above inclusion gives
$$\PP^a\gsf \BB\CC(\v_1,\tau) \subseteq \CC(\PP^a\gsf \BB\v_1,cD^2\tau).$$

Further assuming that $\tau$ is less than $\displaystyle \frac{\d}{3cD^2}$,  the image of $\CC(\v_1,\tau)$ under 
$$\wt{\gsf}:=\PP^a\gsf \BB=\HH^s_{\s^a \wt{y},p}\AA^{a+n(x)}(y)\BB$$ is contained in $\CC(\v_i,2\d/3)$; that is, $\wt{\gsf}\CC(\v_1,\tau) \subseteq \CC(\v_i,2\d/3).$
Setting 
\begin{equation}\label{eq: tau'}
\tau':=\min\Big\{\frac{\ep_1}{2D},\frac{\d}{3cD^2}\Big\},
\end{equation}
we summarize the construction thus far in the following lemma.
\begin{lem}\label{lem: g_summary}
For any $\tau\in (0,\tau']$, the constructed path $\wt{\gsf}$ maps $\CC(\v_1,\tau)$ inside $\CC(\v_i,2\d/3)$ for some $1\leq i\leq d$. Along its path, $\wt{g}$ shadows the forward orbit of $x$ upto time $n$. Moreover, $\wt{g}$ has length $n _{\wt{g}}:=a+n(x)+n(y)$.
\end{lem}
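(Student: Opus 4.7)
The plan is to consolidate the three claims from the construction carried out just above. Since each intermediate inclusion has already been established inline, the proof reduces to a clean cone chase together with a brief indexing argument; no new ingredient is needed, and the only point requiring care is to check that the two upper bounds on $\tau$ forced by the cone chase match the two entries in the definition \eqref{eq: tau'} of $\tau'$, which was arranged by design.

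For the cone mapping assertion, I would compose the three estimates already in hand. The path $B$, having length at most $N_1$, sends $\CC(\v_1,\tau)$ into $\CC(\BB\v_1,D\tau)$ via \eqref{eq: rho and cones} and \eqref{eq: D}, and the bound $\tau \leq \ep_1/(2D)$ shrinks this to $\CC(\BB\v_1,\ep_1/2)$. The transversality estimate $\rho(\BB\v_1,\UU_{g}) \geq \ep_1$ then places the latter cone in $\CC(\UU_{g},\ep_1/2)^c$, precisely the region on which Proposition \ref{prop: AMS} bounds $\|\gsf\|_\rho$ by $c = c(\ep_1/2)$. Applying $\PP^a$ with $a \leq N_2$, again controlled by \eqref{eq: D}, yields
\[
\wt{\gsf}\,\CC(\v_1,\tau) \;\subseteq\; \CC(\PP^a\gsf\BB\v_1,\,cD^2\tau).
\]
Lemma \ref{lem: powers of P} was applied precisely to place the center $\PP^a\gsf\BB\v_1$ inside $\CC(\v_i,\d/3)$ for some $1 \leq i \leq d$, and the assumption $\tau \leq \d/(3cD^2)$ together with the triangle inequality for $\rho$ upgrades the above to the stated $\CC(\v_i,2\d/3)$.

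For the shadowing and the length, I would unwind the definition of $\wt{g}$. Using the $\A$-equivariance of the canonical stable holonomy, the three pieces $B$, $g$, and $P^a$ compose into a single path
\[
\wt{g} \;=\; H^s_{\s^{a+n(x)+n(y)}b,\,p}\,\A^{a+n(x)+n(y)}(b)\,H^u_{p,b},
\]
whose length is visibly $a + n(x) + n(y)$. The segment of its orbit corresponding to the middle block $\A^{n(x)}(y)$ synchronously tracks the forward orbit of $y$ for $n(x)$ steps; but $y$ was chosen in $\s^{-n}\Wloc^u(\s^n x)$, which forces $y_k = x_k$ for all $k \leq n$, so this segment shadows $x, \s x,\ldots, \s^{n-1}x$ symbolically. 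This is exactly the shadowing statement for $\wt{g}$, and I do not anticipate any genuine obstacle beyond the bookkeeping already outlined.
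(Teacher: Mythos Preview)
Your proposal is correct and matches the paper's approach exactly: the lemma is stated in the paper without a separate proof because it merely summarizes the inline cone chase and construction carried out in the preceding paragraphs, and your write-up reproduces that chase step for step (the $\ep_1/(2D)$ bound to land in the domain of Proposition~\ref{prop: AMS}, the $\d/(3cD^2)$ bound to fit inside $\CC(\v_i,2\d/3)$, and the length count $a+n(x)+n(y)$). One small wording point: the orbit of $\wt g$ actually runs through $b$, not $y$, so the shadowing should be justified via $\s^{n(y)}b\in\Wloc^s(y)$ (hence $\s^{n(y)}b\in[x]_n$) rather than by invoking ``the middle block $\A^{n(x)}(y)$'' directly; this is implicit in what you wrote and does not affect the argument.
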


We now have a path $\wt{g}$ with some control. However, it is not yet enough in that it does not satisfy the proximality condition (first condition of Theorem \ref{thm: model thm}). On the other hand, we know how $\wt{\gsf}$ maps $\CC(\v_1,\tau)$. So we want to further modify the path so that $\CC(\v_1,\tau)$ gets mapped into itself, which then we can apply Proposition \ref{prop: tits} (Tits' criteria) to verify proximality.

Since $a \leq N_2$, $n(y) \leq N_1$, and $|n(x) - n| \leq N_0$, the difference $|n_{\wt{g}}-n|$ can be bounded above by $N_0+N_1+N_2$: 
\begin{equation}\label{eq: M}
|n_{\wt{g}}-n| \leq N_0+N_1+N_2.
\end{equation} 
Note that this upper bound depends only on the base dynamical system $(\Sig,\s)$ and the cocycle $\A$ (i.e., depends on the mixing rate of $\s$ and the constant $\delta$ which depends on the cocycle $\A$) but not on $x$ and $n$.

Now consider $w:=\s^{-n_{\wt{g}}}[\s^{n_{\wt{g}}}b,z] \in \Wloc^u(p)$. For any $\ell\in \N$, we can connect two paths $\wt{g}$ and $P^\ell \psi = H^s_{\s^\ell z}\A^\ell(z) H^u_{p,z}$ using Lemma \ref{lem: connect}:
\begin{align}\label{eq: wtB}
\wt{B}:=H^s_{\wt{w},p}\A^{n_{\wt{g}}+\ell}(w)H^u_{p,w} =P^\ell \psi R_{\s^{n_{\wt{g}}}w} \wt{g}
\end{align}
where $\wt{w}:=\s^{n_{\wt{g}}+\ell}w$. This new path which we call $\wt{B}$ starts and ends at $p$ (via $w$ and $\wt{w}$), and has length $n_{\wt{g}}+\ell$; see Figure \ref{fig3}.
The path $\wt{B}$ has the following property when $\ell$ is large:
\begin{lem}\label{lem: ell}
For any $\tau_1\in (0,\tau']$, $\tau_2>0$, and $\xi>0$, there exists $\ell_1 = \ell_1(\tau_1,\tau_2,\xi) \in \N$ such that for any $\ell \geq \ell_1$, the path $\wt{B}:=H^s_{\wt{w},p}\A^{n_{\wt{g}}+\ell}(w)H^u_{p,w}$ satisfies the following properties:
\begin{enumerate}
\item $\wt{\BB}$ maps $\CC(\v_1,\tau_1)$ into $\CC(\v_1,\tau_2)$, and
\item $\|\wt{\BB} \mid_{\CC(\v_1,\tau_1)}\|_\rho <\xi.$ 
\end{enumerate}
\end{lem}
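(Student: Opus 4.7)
My plan is to track how the cone $\CC(\v_1, \tau_1)$ is moved through each factor in the decomposition $\wt{B} = P^\ell \psi R_{\s^{n_{\wt{g}}}w}\wt{g}$ from \eqref{eq: wtB}, and then read off both (1) and (2) from the effect of the final factor $\PP^\ell$.

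I would first apply Lemma \ref{lem: g_summary} to obtain $\wt{\gsf}\CC(\v_1, \tau_1) \subseteq \CC(\v_i, 2\delta/3)$ for some $1 \leq i \leq d$. Next, since the construction has arranged $d(\wt{y}, p) \leq 2^{-m}$ with $m = m(\delta/3)$, the rectangle $R_{\s^{n_{\wt{g}}}w}$ has a short edge on the $\wt{g}$-side (namely the side from $p$ to $\s^{n_{\wt{g}}}b$), so the opposite-edge version of Lemma \ref{lem: rectangle} yields $\RR_{\s^{n_{\wt{g}}}w}\CC(\v_i, 2\delta/3) \subseteq \CC(\v_i, \delta)$. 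The twisting inclusion \eqref{eq: delta} then gives $\Psi \CC(\v_i, \delta) \subseteq \bigl(\bigcup_j \CC(\ww_j, \delta)\bigr)^c \subseteq \CC(\ww_1, \delta)^c$.

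The endgame uses the pinching of $P$. Because $P$ has simple eigenvalues of strictly decreasing moduli with dominant direction $\v_1$, a routine eigenbasis computation gives two facts: (a) for every $\tau_2 > 0$ there exists $\ell_0 = \ell_0(\tau_2)$ such that $\PP^\ell \CC(\ww_1, \delta)^c \subseteq \CC(\v_1, \tau_2)$ for all $\ell \geq \ell_0$, and (b) $\|\PP^\ell \mid_{\CC(\ww_1, \delta)^c}\|_\rho \to 0$ at the spectral-gap rate as $\ell \to \infty$. Fact (a) chained with the previous paragraph gives conclusion (1) immediately.

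For (2), I would apply sub-multiplicativity of $\|\cdot\|_\rho$ along the decomposition to bound
$$\|\wt{\BB}\mid_{\CC(\v_1,\tau_1)}\|_\rho \leq \|\PP^\ell\mid_{\CC(\ww_1,\delta)^c}\|_\rho \cdot \|\Psi\|_\rho \cdot \|\RR_{\s^{n_{\wt{g}}}w}\|_\rho \cdot \|\wt{\gsf}\mid_{\CC(\v_1,\tau_1)}\|_\rho,$$
then bound the last three factors by constants depending only on $\A$ via \eqref{eq: D} and, for $\wt{\gsf}$, Proposition \ref{prop: AMS} together with $\tau_1 \leq \tau' \leq \ep_1/(2D)$. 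Fact (b) then lets me choose $\ell_1 \geq \ell_0(\tau_2)$ large enough to force the product below $\xi$. I expect the only delicate point to be justifying the hypothesis of Lemma \ref{lem: rectangle}: its naturally short leg sits at $\s^{n_{\wt{g}}}b$ (near $p$) rather than at $z$, so I need the opposite-pair-of-edges variant of that lemma, which is exactly the reason the setup imposed $d(\wt{y}, p) \leq 2^{-m}$.
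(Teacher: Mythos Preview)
Your proposal is correct and follows essentially the same route as the paper: trace the cone through the factorization $\wt{\BB}=\PP^\ell \Psi \RR_{\s^{n_{\wt{g}}}w}\wt{\gsf}$, using Lemma~\ref{lem: g_summary}, then Lemma~\ref{lem: rectangle} (with the short leg at $\s^{n_{\wt{g}}}b$, exactly as you identify), then \eqref{eq: delta} and the pinching of $P$. The only difference is in part (2): the paper argues informally that the $\PP^\ell$-term forces the image arbitrarily close to $\v_1$, whereas you make the Lipschitz bound explicit via sub-multiplicativity and Proposition~\ref{prop: AMS} to control $\|\wt{\gsf}\mid_{\CC(\v_1,\tau_1)}\|_\rho$ --- your version is in fact the more complete of the two.
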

\begin{proof} 
Consider $\ell(\tau_2)$ defined as in \eqref{eq: 1}. Recalling the expansion $\wt{\BB}=\PP^\ell \Psi \RR_{\s^{n_{\wt{g}}}w}\wt{\gsf}$ from \eqref{eq: wtB}, we claim that $\wt{\BB}\CC(\v_1,\tau_1)\subseteq \CC(\v_1,\tau_2)$ for any $\ell \geq \ell(\tau_2)$.

Indeed, since $\tau_1 $ is less than $\tau'$, Lemma \ref{lem: g_summary} ensures that $\wt{\gsf}$ maps $\CC(\v_1,\tau_1)$ inside $\CC(\v_i,2\d/3)$ for some $i$. Then from Lemma \ref{lem: rectangle}, $\RR_{\s^{n_{\wt{g}}}w}$ does not move any direction off itself more than $\d/3$ in angle because one of its legs connecting $\s^{n_{\wt{g}}}b$ and $p$ has length at most $2^{-m}$. This follows because $d(\s^{n_{\wt{g}}}b,p) \leq d(\s^{n(x)+n(y)}b,p) = d(\wt{y},p) \leq 2^{-m}$. Hence, we have 
$\RR_{\s^{n_{\wt{g}}}w}\wt{\gsf}\CC(\v_1,\tau_1) \subseteq \CC(\v_i,\d)$. Then \eqref{eq: 1} ensures that $\wt{\BB}$ maps $\CC(\v_1,\tau_1)$ into $\CC(\v_1,\tau_2)$.

The second requirement $\|\wt{\BB} \mid_{\CC(\v_1,\tau_1)}\|_\rho <\xi$ can also be met for all $\ell \geq \ell'$ for some $\ell' \in \N$ depending only on $\tau_1$ and $\xi$. Indeed, this is possible because $\v_1$ is the attracting fixed point of $\PP$, and the $\PP^\ell$-term in the expansion \eqref{eq: wtB} will ensure that $\wt{\BB}$ maps $\CC(\v_1,\tau_1)$ as close to $\v_1$ as necessary for sufficiently large $\ell$. 
The required $\ell_1$ in the lemma can then be chosen to be $\max\{\ell(\tau_2),\ell'\}$.
\end{proof}

In view of Proposition \ref{prop: tits} (Tits' criteria) and the above lemma,  for $\ell$ sufficiently large the corresponding path $\wt{B}$ has the desired property listed in Theorem \ref{thm: model thm}. However, what we need is the periodic orbit equipped with such properties. The last step in the proof of Theorem \ref{thm: model thm} is to verify that the periodic orbit $q$ shadowing the path $\wt{B}$ indeed has the desired properties.

Given $\ell \in \N$ sufficiently large, let $q:=q_\ell\in \Sig$ be the periodic point of period $n_q:=n_{\wt{g}}+\ell$ that repeats the first $n_q$ alphabets of $w$. Let 
$$r:=[w,q] \text{ and }\wt{r}:=\s^{n_q}r=[\s^{n_q}w,\s^{n_q}q]=[\wt{w},q].$$ Since $w$ lies on $\Wloc^u(p)$, we can also write $r$ as $[q,p]$; see Figure \ref{fig2}. 
Using these new points, the expression $\A^{n_q}(q)$ can be related to $\wt{B}$ as in the following lemma:

\begin{lem}\label{lem: q-expansion}
Setting $H_1 := H^s_{\wt{r},q}H^u_{\wt{w},\wt{r}}H^s_{p,\wt{w}}$ and $H_2:=H^u_{r,p}H^s_{q,r}$, we have
$$\A^{n_q}(q) = H_1\wt{B}H_2.$$
Moreover, $H_2H_1$ is the rectangle $R_{\wt{r}}$ defined as in \eqref{eq: R}.
\end{lem}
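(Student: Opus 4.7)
The plan is to establish both identities by direct symbolic manipulation, using just two tools: composition of holonomies along a common local stable (resp.\ unstable) set, and the $\A$-equivariance properties $H^u_{\s y,\s x}\A(y) = \A(x)H^u_{y,x}$ and $\A(y)H^s_{x,y} = H^s_{\s x,\s y}\A(x)$.

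For the first identity, I would unpack $\wt{B}$ inside $H_1\wt{B}H_2$. The two innermost stable holonomies $H^s_{p,\wt{w}}H^s_{\wt{w},p}$ cancel immediately. Next, since $r=[w,q]\in \Wloc^u(w)$ with $\s^{n_q}r=\wt{r}$, applying unstable $\A$-equivariance converts $H^u_{\wt{w},\wt{r}}\A^{n_q}(w)$ into $\A^{n_q}(r)H^u_{w,r}$; the three unstable holonomies that now appear together, $H^u_{w,r}H^u_{p,w}H^u_{r,p}$, all live on the common local unstable set through $p$ and telescope to the identity. A second application of $\A$-equivariance, this time for the stable holonomy using $r\in \Wloc^s(q)$, yields $\A^{n_q}(r)H^s_{q,r} = H^s_{q,\wt{r}}\A^{n_q}(q)$, after which $H^s_{\wt{r},q}H^s_{q,\wt{r}} = \id$ produces the desired $\A^{n_q}(q)$.

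For the second identity, the two consecutive stable holonomies $H^s_{q,r}H^s_{\wt{r},q}$ in $H_2H_1$ collapse to $H^s_{\wt{r},r}$, because $\wt{r},q,r$ all lie on a common local stable set (both brackets $[\wt{w},q]$ and $[w,q]$ sit on $\Wloc^s(q)$). This leaves $H_2H_1 = H^u_{r,p}H^s_{\wt{r},r}H^u_{\wt{w},\wt{r}}H^s_{p,\wt{w}}$, a closed four-sided loop through the vertices $p,\wt{w},\wt{r},r$. To match \eqref{eq: R}, I would identify the diagonal complements: $r=[p,\wt{r}]$ (since $r\in \Wloc^u(p)\cap \Wloc^s(\wt{r})$, using $\wt{r}\in \Wloc^s(q)$ and $r\in \Wloc^s(q)$) and $\wt{w}=[\wt{r},p]$ (since $\wt{w}\in \Wloc^s(p)\cap \Wloc^u(\wt{r})$, using $\wt{r}=[\wt{w},q]\in\Wloc^u(\wt{w})$). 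Substituting these into the definition of $R_{\wt{r}}$ from \eqref{eq: R} reproduces the expression term by term.

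There is no substantive obstacle; the argument is entirely a bookkeeping exercise. The only care required is checking that every pair of composed (or canceled) holonomies sits on a genuine common local leaf and that each application of $\A$-equivariance uses the correct pair of basepoints with the right shift relation. Once the vertex identifications $r=[p,\wt{r}]$ and $\wt{w}=[\wt{r},p]$ are in hand, the second claim is immediate.
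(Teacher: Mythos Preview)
Your proposal is correct and follows essentially the same approach as the paper: the paper derives the first identity by invoking the bounded-distortion relation \eqref{eq: same cylinder} (which is itself just the packaged form of the $\A$-equivariance identities you unfold directly) and then inverting the holonomies, and it leaves the second identity to a reference to Figure~\ref{fig2}, which you verify explicitly by the vertex identifications $r=[p,\wt{r}]$ and $\wt{w}=[\wt{r},p]$. The arguments are the same bookkeeping, with yours spelled out in slightly more detail.
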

\begin{proof}
From \eqref{eq: same cylinder}, we have
$\A^{n_q}(w) = H^u_{\wt{r}, \wt{w}}H^s_{q, \wt{r}} \A^{n_q}(q) H^s_{r,q}H^u_{w,r}$. Noting that $\wt{B} = H^s_{\wt{w},p}\A^{n_q}(w)H^u_{p,w} $ from \eqref{eq: wtB}, the statement of the lemma follows from the fact that the inverse of $H^{s/u}_{x,y}$ is given by $H^{s/u}_{y,x}$;
see Figure \ref{fig2}.
\end{proof}
\begin{figure}[H]
\caption{}
\includegraphics[width=7.5cm]{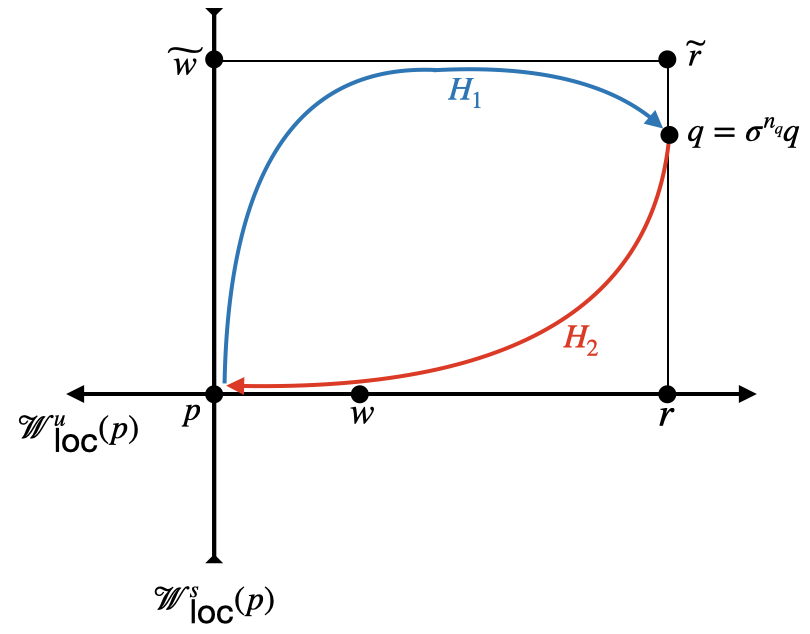}
\label{fig2}
\centering
\end{figure}

Since $H_1$ and $H_2$ are compositions of finite local holonomies, we may assume that $$\max\{\|\HH_1\|_\rho,\|\HH_2\|_\rho\} \leq D$$ where $D$ is the uniform upper bound introduced in \eqref{eq: D}.
Using the construction thus far, we now complete the proof of Theorem \ref{thm: model thm}.
\begin{proof}[Proof of Theorem \ref{thm: model thm}]
For $\tau_0$ appearing in the statement of the theorem, we claim that we can choose $\displaystyle \tau_0:=\frac{\tau'}{6D}$ where $\tau'$ and $D$ are defined in \eqref{eq: tau'} and \eqref{eq: D}. Let $\tau \in (0,\tau_0)$ be an arbitrary number, and $\xi:=\xi(\tau)>0$ be the constant from Proposition \ref{prop: tits}. We will show that there exists  $\ell\in \N$ such that the periodic point $q:=q_\ell$ satisfies (and hence verifies the assumptions in Proposition \ref{prop: tits})
$$\AA^{n_q}(q)\CC(\HH_1\v_1,3\tau)\subseteq \CC(\HH\v_1,\tau) \text{ and }\|\AA^{n_q}(q) \mid_{\CC(\HH_1\v_1,3\tau)}\|_\rho \leq \xi,$$
where $H_1$ is defined in Lemma \ref{lem: q-expansion}.

Let $\ell_1 = \ell_1(\tau', \frac{\tau}{D},\frac{\xi}{D^2})$ be from Lemma \ref{lem: ell}; note that $\ell_1$ depends only on $\tau$. We also let $m_1:=m(\tau'/2)$ be from Lemma \ref{lem: rectangle}. Fix any $\ell \geq \ell_1$ such that $d(\s^\ell z,p) \leq 2^{-m_1}$; such $\ell$ depends only on $\tau$.
Letting $\wt{B}$ be the corresponding path \eqref{eq: wtB} built above for such $\ell$, we first consider the effect of $\HH_2$ on the cone $\CC(\HH_1\v_1,3\tau)$: 
$$\HH_2\CC(\HH_1\v_1,3\tau) \subseteq \CC(\HH_2\HH_1\v_1,3D\tau) \subseteq \CC(\HH_2\HH_1\v_1,\tau'/2) \subseteq \CC(\v_1,\tau').$$ 
The second inclusion used the assumption that $\displaystyle \tau < \tau_0 = \frac{\tau'}{6D}$, and the last inclusion used Lemma \ref{lem: rectangle} and the assumption that $d(\wt{w},p)=d(\s^\ell z,p)\leq 2^{-m_1}$ along with Lemma \ref{lem: q-expansion}.

From the choice of $\ell \geq \ell_1=\ell_1(\tau', \frac{\tau}{D},\frac{\xi}{D^2})$, Lemma \ref{lem: ell} gives
$$\wt{\BB}\HH_2\CC(\HH_1\v_1,3\tau) \subseteq \wt{\BB}\CC(\v_1,\tau') \subseteq \CC(\v_1,\tau/D).$$
Recalling from Lemma \ref{lem: q-expansion} that $\AA^{n_q}(q)=\HH_1 \wt{\BB}\HH_2$, we have
$$\AA^{n_q}(q) \CC(\HH_1\v_1,3\tau) \subseteq \HH_1\CC(\v_1,\tau/D) \subseteq \CC(\HH_1\v_1,\tau).$$
This shows the first requirement of Proposition \ref{prop: tits} that $\AA^{n_q}(q)\CC(\HH_1\v_1,3\tau)\subseteq \CC(\HH\v_1,\tau)$. 
The second requirement on the $\rho$-norm also easily follows. Indeed, Lemma \ref{lem: ell} gives $\|\wt{\BB} \mid_{\CC(\v_1,\tau')}\|_\rho \leq \xi/D^2$, and recall that we have $\max\{\|\HH_1\|_\rho,\|\HH_2\|_\rho\} \leq D$. Putting these together gives the second requirement $\|\AA^{n_q}(q) \mid_{\CC(\HH_1\v_1,3\tau)}\|_\rho \leq \xi$. From Proposition \ref{prop: tits}, this shows that $\A^{n_q}(q)$ is $\tau$-proximal.

The remaining conditions of Theorem \ref{thm: model thm} can easily be checked.
Recalling from \eqref{eq: M} that the difference $|n_{\wt{g}}-n|$ is uniformly bounded above by $N_0+N_1+N_2$ (which depends only on the cocycle) the constant $k(\tau)$ appearing in the statement of the theorem can be set to $k(\tau):=N_0+N_1+N_2+\ell$, which depends only on $\tau$. It is then clear that the period $n_q=n_{\wt{g}}+\ell$ of $q$ belongs in the range $[n,n+k(\tau)]$. Also, since $\wt{B}$ shadows the forward orbit of $x$ upto time $n$ along its path, so does the length $n_q$ periodic orbit of $q$. This completes the proof. 
\end{proof}

We conclude this section by commenting on the uniformity of the constant $k \in \N$ and the constructed periodic point $q \in\Sig$ in Theorem \ref{thm: model thm}.

\begin{rem}\label{rem: uniform constants}
Since Theorem \ref{thm: 1-transversality} is the main ingredient in the proof of Theorem \ref{thm: model thm}, we will comment on its constants $\ep_1,N_1$ and path $B_{x,y}$ first. 
Note that 1-typicality is an open condition because all data attached to $\A$, such as $P$ and $\psi$, vary continuously in $\A$. Hence, the constants $\ep_1,N_1$ can be chosen such that they work uniformly near $\A$. 
Similarly, given $x,y\in \Sig$, a vector $v \in \R^d$ and a hyperplane $V \subset \R^d$, the path $B_{x,y}$ mapping $v$ transverse to $V$ also works uniformly near $\A$. 

Again using the continuity of data attached to $\A$, the constants $\tau_0>0$ and $k(\tau)\in \N$ given $\tau \in (0,\tau_0)$ can be chosen uniformly near $\A$ from Theorem \ref{thm: model thm}. Note, however, that the constructed periodic point $q \in \Sig$ cannot be chosen uniformly near $\A$. 
Indeed, suppose we consider a cocycle $\B$ arbitrarily close to $\A$. Regardless of how close $\B$ is to $\A$, there is no guarantee that $g_\A$ and $g_\B$ defined (for some given $x$ and $n$) as in \eqref{eq: g} would be similar linear maps when the given $n\in \N$ is arbitrarily large. In particular, the hyperplanes $\U_{g_\A}$ and $\U_{g_\B}$ would not necessarily be close, and this will lead to a different choice of the path $B_{p,y}$ for $\A$ and $\B$. This is similar to how quasi-multiplicativity for typical cocycles can be established with constants working uniformly near $\A$, but the connecting word cannot necessarily be chosen uniformly near $\A$; see \cite[Remark 4.20]{park2020quasi}.
\end{rem}

\section{Simultaneous proximality}\label{sec: simult proximality}
The goal of this section is to prove the following generalization of Theorem \ref{thm: model thm} which constructs a periodic point $q \in \Sig$ such that $(\A^{\wedge t})^{n_q}(q)$ is proximal simultaneously for all $1 \leq t \leq d-1$.

\begin{thm}\label{thm: construct proximal general}
Let $\A \colon \Sig \to \glr$ be a typical cocycle. Then there exists $\tau_0 = \tau_0(\A)>0$ such that for any $\tau \in (0,\tau_0)$, there exists $k =k(\tau)\in \N$ with the following property: 
for any $x \in \Sig$ and $n \in \N$ there exists a periodic point $q \in\Sig$ of period $n_q \in [n,n+k]$ such that 
\begin{enumerate}
\item $(\A^{\wedge t})^{n_q}(q)$ is $\tau$-proximal for every $1 \leq t \leq d-1$, and
\item there exists $j\in \N$ such that $\s^j q \in [x]_n$.
\end{enumerate}
\end{thm}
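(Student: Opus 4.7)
The plan is to lift the construction of Theorem \ref{thm: model thm} to all exterior powers $\A^{\wedge t}$ simultaneously, exploiting that typicality of $\A$ means each $\A^{\wedge t}$ is 1-typical with respect to the \emph{same} typical pair $(p,z)$. Because each $\A^{\wedge t}$ shares the same base dynamics, the same holonomy loop $\psi = \psi_z$, and the same distinguished fixed point $p$, the entire construction of Section \ref{sec: proximal construction} should be adaptable via a single path and a single choice of parameters, and Tits' criterion can then be invoked in each exterior power separately.

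First, I would establish the simultaneous analogue of Lemma \ref{lem: powers of P}: for any $\ep > 0$ there exists $N \in \N$ such that for any collection $(\v_t)_{t=1}^{d-1}$ with $\v_t \in \P(\wedge^t \R^d)$, there is a common $a \in [0,N]$ for which $(\PP^{\wedge t})^a \v_t$ lies within $\ep$ of some eigenvector of $P^{\wedge t}$ for every $t$. The point is that, for each fixed $t$ and each starting $\v_t$, the set of times $a$ at which $(\PP^{\wedge t})^a \v_t$ avoids all $\ep$-neighborhoods of eigenvectors of $P^{\wedge t}$ is a union of at most $\binom{d}{t}-1$ transition windows, each of width uniformly bounded purely in terms of the ratios of eigenvalues of $P$; summing across $t$ gives a uniform total bad-measure, so any $N$ exceeding it suffices. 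Next, using this simultaneous pigeonhole together with the observation that typicality forces every coordinate of $\psi^{\wedge t}(v_{i_1} \wedge \ldots \wedge v_{i_t})$ in the eigenbasis of $P^{\wedge t}$ to be nonzero (these coefficients are precisely the $t \times t$ minors whose non-vanishing is the content of Definition \ref{defn: typical}(2)), I would establish the \emph{simultaneous transversality}: constants $N_1 \in \N$ and $\ep_1 > 0$ such that for any $x, y \in \Sig$ and any collection of nonzero vectors $v_t \in \wedge^t \R^d$ and hyperplanes $V_t \subset \wedge^t \R^d$, there is a \emph{single} path $B_{x,y}$ of length $\leq N_1$ with $\BB_{x,y}^{\wedge t} \v_t \notin \CC(\vv_t, \ep_1)$ for all $t$ simultaneously. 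The argument mirrors Theorem \ref{thm: 1-transversality} step-for-step, substituting the simultaneous pigeonhole for Lemma \ref{lem: powers of P} throughout and using that $\A^{-1}$ is typical by Remark \ref{rem: inverse}.

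With these two ingredients, the construction of Subsection \ref{subsec: pf thm 3.1} carries through essentially verbatim. Given $x \in \Sig$ and $n \in \N$, I would build the same $g = H^s_{\wt y, p} \A^{n(x)}(y)$ and observe that Proposition \ref{prop: AMS} applied to $g^{\wedge t}$ produces a hyperplane $\U_{g^{\wedge t}}$ for each $t$. The simultaneous transversality then yields a single path $B$ such that $(gB)^{\wedge t}$ maps $v_1 \wedge \ldots \wedge v_t$ uniformly off $\U_{g^{\wedge t}}$, giving a uniform $\rho$-norm control of $(gB)^{\wedge t}$ on a small cone around $v_1 \wedge \ldots \wedge v_t$. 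The simultaneous pigeonhole then provides a common $a$ with $(\PP^a \gsf \BB)^{\wedge t}(v_1 \wedge \ldots \wedge v_t)$ near some eigenvector of $P^{\wedge t}$ for each $t$. Appending the holonomy loop $\psi$, which by typicality sends any eigenvector of $P^{\wedge t}$ to a vector with uniformly nonzero projection onto the top eigenvector $v_1 \wedge \ldots \wedge v_t$, and then a sufficiently large power $P^\ell$, yields a path $\wt B$ whose exterior powers $\wt \BB^{\wedge t}$ simultaneously map cones $\CC(v_1 \wedge \ldots \wedge v_t, \tau_1)$ strictly inside themselves with small $\rho$-norm. Passing to the shadowing periodic point $q$ and invoking Tits' criterion (Proposition \ref{prop: tits}) in each exterior power delivers the simultaneous $\tau$-proximality of $(\A^{\wedge t})^{n_q}(q)$; the length $n_q \in [n,n+k]$ and the shadowing condition transfer from the single-$t$ case without change.

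The main obstacle is the simultaneous pigeonhole step, since naively different exterior powers might force incompatible choices of $a$. The key point making it work is that the transition window widths for $\PP^{\wedge t}$ depend only on the fixed spectral data of $P$ and not on the input vectors $\v_t$, so the total bad-measure across all $t$ is universally bounded; this is exactly what allows a single $a \in [0,N]$ to suffice across all $d-1$ exterior powers. Once this is in place, the remainder is bookkeeping strictly parallel to the single-$t$ case, with the constants $\tau_0$, $k(\tau)$, and the $\rho$-norm bound $D$ chosen to absorb the finitely many holonomy rectangle corrections uniformly across the $d-1$ exterior powers via Lemma \ref{lem: rectangle}.
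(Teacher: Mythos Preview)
Your proposal is correct and follows essentially the same route as the paper. The paper phrases the argument in a slightly more abstract ``general setting'' (cocycles $\A_t$ on spaces $\V_t$ sharing a common typical pair, specializing afterward to $\A_t=\A^{\wedge t}$), cites \cite[Lemma~4.22]{park2020quasi} for the simultaneous pigeonhole you sketch, and then reruns Subsection~\ref{subsec: pf thm 3.1} with Lemma~\ref{lem: simult turn} and Theorem~\ref{thm: simult transverse} in place of Lemma~\ref{lem: powers of P} and Theorem~\ref{thm: 1-transversality}; your outline matches this step for step, and your identification of the simultaneous pigeonhole as the only genuinely new ingredient is exactly right.
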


The same argument used in Theorem \ref{thm: model thm} applies here. We will first prove simultaneous transversality for typical cocycles (Theorem \ref{thm: simult transverse}) and then use it together with Proposition \ref{prop: AMS} to control the $\rho$-norm of the path we build just as in Theorem \ref{thm: model thm}.

Before we begin the proof, we set up a few notations and introduce relevant lemmas. 
We will try to maintain the same notation used in Section \ref{sec: proximal construction} as much as possible.
First, we introduce the following general setting that will be assumed throughout the section.\\

\noindent\textbf{General setting:} Let $\k \in \N$.
For each $t \in \{1,\ldots,\kappa\}$, let $\V_t$ be a real vector space of dimension $d_t\in \N$ equipped with an inner product and $\A_t \colon \Sig \to \text{GL}(\V_t)$ a 1-typical cocycle such that there exists a \emph{common} (over all $t$) typical pair $(p,z)$ satisfying the pinching and twisting conditions for each $\A_t$. 
\\

We now make the same simplifications assumed in the proof of Theorem \ref{thm: model thm} that $p\in \Sig$ is a fixed point and that the homoclinic point $z \in \mathcal{H}(p)$ lies on $\Wloc^u(p)$.
As before, we denote the eigenvectors of $P_t:=\A_t(p)$ by $\{v_1^{(t)},\ldots,v_{d_t}^{(t)}\}$, listed in the order of decreasing absolute values for their corresponding eigenvalues. We then define the hyperplanes $$\WW_i^{(t)} :=\{ v_1^{(t)},\ldots, v_{i-1}^{(t)},v_{i+1}^{(t)},\ldots,v_{d_t}^{(t)}\}\subset \V_t .$$
In order to avoid overloading the super/subscripts, throughout the proof, we will suppress the notation ``$(t)$'' whenever the context is clear. For instance, we will often simply write $H^{s/u}$ instead of $H^{s/u,(t)}$ for the canonical holonomies of $\A_t$.  

We also fix a few constants. Let $\d_t>0$ be the corresponding constant for $\Psi^{(t)}$ defined as in \eqref{eq: delta}, and set $\d:=\min\limits_{1 \leq t \leq \k} \d_t$. 
Also for any $\ep>0$, we set $\ell(\ep):=\max\limits_{1\leq t\leq \k} \ell_t(\ep)$ where $\ell_t(\ep)$ is defined as in \eqref{eq: 1} with respect to $\A_t$. Then we have the following property analogous to \eqref{eq: 1}:
\begin{equation}\label{eq: 2}
\PP_t^{\ell(\ep)}\Psi^{(t)} \Big(\bigcup \limits_{i=1}^{d_t}\CC(\v_i^{(t)},\d)\Big) \subseteq  \CC(\v_1^{(t)},\ep) 
\end{equation}
for every $1 \leq t \leq \k$.

The following lemma will play the role of Lemma \ref{lem: powers of P}. It allows us to find a common integer $a\in \N$ such that $\PP_t^a$ simultaneously turns the given direction $\u_t$ close to some $\v_i^{(t)}$ for each $1 \leq t \leq \k$.
\begin{lem}\cite[Lemma 4.22]{park2020quasi}\label{lem: simult turn} Given $\ep>0$, there exists $N = N(\ep)$ such that for any $\u_1 \in \P(\V_1), \ldots,\u_\kappa \in \P(\V_{\kappa})$, there exists $a \in [0,N]$ such that 
$$\PP_t^a \u_t \in \bigcup\limits_{i=1}^{d_t} \CC(\v_i^{(t)},\ep)$$
for every $t \in \{1,\ldots,\kappa\}$.
\end{lem}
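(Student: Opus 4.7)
The plan is to prove the lemma by a compactness-continuity argument, reducing the uniform simultaneous conclusion to a pointwise statement that follows directly from the pinching of each $P_t$. I would first show that, for a fixed tuple $\vec{\u} = (\u_1, \ldots, \u_\kappa)$, some common time $a^*(\vec{\u}) \in \N$ sends each $\u_t$ into a cone around an eigenvector of $P_t$, and then upgrade this to a uniform bound $N(\ep)$ using compactness of $\prod_{t=1}^\kappa \P(\V_t)$.

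For the pointwise existence, for each $t$ let $k_t = k_t(\u_t) \in \{1, \ldots, d_t\}$ denote the smallest index $i$ such that $u_t$ has a nonzero $v_i^{(t)}$-component in the eigenbasis of $P_t$. Since $P_t$ is pinching, a direct computation in the eigenbasis shows that $\PP_t^a \u_t \to \v_{k_t}^{(t)}$ as $a \to \infty$ at an exponential rate, so there exists $a_t^* \in \N$ with $\PP_t^a \u_t \in \CC(\v_{k_t}^{(t)}, \ep/2)$ for all $a \geq a_t^*$. Setting $a^*(\vec{\u}) := \max_{1 \leq t \leq \kappa} a_t^*$ then gives a common time satisfying $\PP_t^{a^*(\vec{\u})} \u_t \in \bigcup_{i=1}^{d_t} \CC(\v_i^{(t)}, \ep/2)$ for every $t$.

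To upgrade to a uniform $N(\ep)$, I would observe that for each $\vec{\u}$, joint continuity of the finite projective composition defining $\PP_t^{a^*(\vec{\u})}$ (with the exponent now fixed) yields an open neighborhood $U_{\vec{\u}}$ of $\vec{\u}$ in $\prod_t \P(\V_t)$ such that
$$\PP_t^{a^*(\vec{\u})}\u_t' \in \CC(\v_{k_t(\u_t)}^{(t)}, \ep) \subseteq \bigcup_{i=1}^{d_t} \CC(\v_i^{(t)}, \ep)$$
for every $\vec{\u}' = (\u_1', \ldots, \u_\kappa') \in U_{\vec{\u}}$ and every $t$. Compactness of $\prod_{t=1}^\kappa \P(\V_t)$ extracts a finite subcover $U_{\vec{\u}^{(1)}}, \ldots, U_{\vec{\u}^{(m)}}$, and setting $N(\ep) := \max_{j \leq m} a^*(\vec{\u}^{(j)})$ produces the uniform bound: any $\vec{\u}$ belongs to some $U_{\vec{\u}^{(j)}}$, and $a := a^*(\vec{\u}^{(j)}) \in [0, N(\ep)]$ then satisfies the conclusion.

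The main subtlety is that $\vec{\u} \mapsto a^*(\vec{\u})$ is far from continuous: the index $k_t(\u_t)$ jumps whenever $\u_t$ crosses a proper $P_t$-invariant subspace such as $\ww_1^{(t)}$, and the time needed to enter an $\ep$-neighborhood of any single specified eigenvector blows up near such loci. The weaker conclusion of landing in the union $\bigcup_i \CC(\v_i^{(t)}, \ep)$ is precisely what rescues uniformity: each neighborhood in the cover targets the specific eigenvector selected by its own center, and the union accommodates these finitely many choices simultaneously.
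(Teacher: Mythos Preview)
Your argument is correct. The paper does not supply its own proof of this lemma; it is quoted verbatim from \cite[Lemma 4.22]{park2020quasi}, so there is no in-paper argument to compare against. Your compactness--continuity scheme is the standard route: fix a tuple, use pinching to find a single time that works pointwise with margin $\ep/2$, freeze that time and use continuity of the fixed projective map $\PP_t^{a^*}$ to propagate to a neighborhood with margin $\ep$, then extract a finite subcover and take the maximum time. The final paragraph correctly identifies why one must target the union $\bigcup_i \CC(\v_i^{(t)},\ep)$ rather than a single eigenvector. One cosmetic point: in step 1 you might note that $a_t^*$ depends on $\u_t$ (and may be unbounded as $\u_t$ approaches a $P_t$-invariant hyperplane), but this is harmless since the compactness step only uses finitely many values of $a^*$.
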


\subsection{Simultaneous transversality of typical cocycles}
In what follows, given a path $B$ from $x$ to $y$ we denote by $B_{x,y}^{(t)}$ the cocycle over the path $B$ with respect to $\A_t$.

\begin{thm}[Simultaneous transversality]\label{thm: simult transverse} In the general setting described above, there exist $N_1 \in \N$ and $\ep_1>0$ such that for any $x,y \in \Sig$, any directions $\w_1 \in \P(\V_1),\ldots,\w_\kappa \in \P(\V_\kappa)$, and any hyperplanes $\U_1 \subset \V_1,\ldots, \U_\kappa \subset \V_\kappa$, there exists a path $B_{x,y}$ of length at most $N_1$ such that
$$
\rho(\BB_{x,y}^{(t)}\w_t,\UU_t)>\ep_1
$$
for every $1 \leq t \leq \kappa$. 
\end{thm}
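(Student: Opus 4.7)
The strategy is to retrace the proof of Theorem \ref{thm: 1-transversality} verbatim, replacing its single-cocycle tools by simultaneous ones; the only genuinely new ingredient is Lemma \ref{lem: simult turn}, which supplies a \emph{common} integer $a$ across all $\k$ cocycles in place of Lemma \ref{lem: powers of P}, together with the joint exponent $\ell(\ep)=\max_t\ell_t(\ep)$ satisfying \eqref{eq: 2}. All remaining ingredients---mixing of $\Sig$, the holonomy loop attached at $z$, Lemma \ref{lem: rectangle}, Lemma \ref{lem: connect}, and Remark \ref{rem: inverse}---already apply uniformly to the finite family $\{\A_t\}$ because the typical pair $(p,z)$ is common.

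First I would prove the simultaneous analog of Lemma \ref{lem: 1}: for any $\ep>0$ and $k\in\N$ there exists $N\in\N$ such that for every $x\in\Sig$ and every tuple $(\w_t)_{t=1}^{\k}$ with $\w_t\in\P(\V_t)$, there is a path $B_{x,p}$ of length at most $N$ whose terminal leg lies within distance $2^{-k}$ of $p$ and which satisfies $\rho(\BB_{x,p}^{(t)}\w_t,\v_1^{(t)})<\ep$ for every $t$. The construction copies Lemma \ref{lem: 1} with one substitution: after using mixing to produce a single $w_0\in\Wloc^u(x)$ and $w_1=\s^M w_0\in\Wloc^s(p)$ with $d(w_1,p)\leq 2^{-m(\d/2)}$, I would apply Lemma \ref{lem: simult turn} at tolerance $\d/2$ to the pushed-forward tuple $\u_t:=\HH^{s,(t)}_{w_1,p}\AA_t^M(w_0)\HH^{u,(t)}_{x,w_0}\w_t$, obtaining a common $a\leq N(\d/2)$ and indices $i_t$ with $\PP_t^a\u_t\in\CC(\v_{i_t}^{(t)},\d/2)$ for all $t$. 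Attaching the holonomy loop at $z$---which acts as $P_t^\ell\psi_z^{(t)}$ on each $\V_t$---with $\ell\geq\ell(\ep)$ chosen so that also $d(\s^\ell z,p)\leq 2^{-k}$, Lemma \ref{lem: rectangle} controls each $\RR^{(t)}$ within angular distortion $\d/2$, and \eqref{eq: 2} then drops every coordinate into $\CC(\v_1^{(t)},\ep)$. Running the identical argument on the $(d_t-1)$-fold exterior powers $\A_t^{\wedge d_t-1}$, which remain 1-typical with the same pair $(p,z)$ by the computation in the proof of Lemma \ref{lem: 2}, yields the simultaneous hyperplane analog.

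By Remark \ref{rem: inverse} the inverse cocycles $\A_t^{-1}$ are 1-typical with the same pair, so the simultaneous hyperplane analog applied to $\A_1^{-1},\ldots,\A_\k^{-1}$ produces a path $B_{p,y}$ satisfying $\rho\bigl((\BB_{p,y}^{(t)})^{-1}\UU_t,\ww_1^{(t)}\bigr)<\ep$ for every $t$, noting that the eigenvalue-ordering reversal carries the inverse's ``$\ww_{d_t}^-$'' to $\ww_1^{(t)}$ in the original. Setting $\eta:=\min_t\rho(\v_1^{(t)},\ww_1^{(t)})>0$ (positive because each $P_t$ has distinct-modulus simple eigenvalues by pinching), choosing $\ep:=\eta/4$ and cylinder parameter $k:=m(\eta/4)$ in both applications, and splicing via Lemma \ref{lem: connect}, I obtain a single path $B_{x,y}$ whose cocycle-over-the-path with respect to each $\A_t$ factors as $\BB_{p,y}^{(t)}\RR^{(t)}\BB_{x,p}^{(t)}$ with the short leg of $R$ of length at most $2^{-m(\eta/4)}$. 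Lemma \ref{lem: rectangle} bounds each $\RR^{(t)}$'s angular distortion by $\eta/4$, and the coordinate-wise version of the triangle-inequality computation at the end of the proof of Theorem \ref{thm: 1-transversality} gives $\rho(\RR^{(t)}\BB_{x,p}^{(t)}\w_t,(\BB_{p,y}^{(t)})^{-1}\UU_t)\geq\eta/4$, which after transferring through $\BB_{p,y}^{(t)}$ yields $\rho(\BB_{x,y}^{(t)}\w_t,\UU_t)\geq\eta/(4D)=:\ep_1$, where $D$ is a uniform $\rho$-norm bound on $(\BB_{p,y}^{(t)})^{-1}$ obtained from \eqref{eq: rho and norm} since the family $\{\A_t\}$ has compact image and path lengths are uniformly bounded. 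Setting $N_1$ equal to twice the length bound from the first step completes the proof. The only real obstacle is bookkeeping: every constant---$\d$, $\ell(\ep)$, $D$, and the rectangle tolerance $m$---must be chosen uniformly over the finite index set $\{1,\ldots,\k\}$, which works because $\k$ is finite and $(p,z)$ is common.
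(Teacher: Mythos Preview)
Your proposal is correct and follows essentially the same route as the paper's proof: establish the simultaneous analog of Lemma \ref{lem: 1} via Lemma \ref{lem: simult turn} and the joint $\ell(\ep)$ from \eqref{eq: 2}, derive the hyperplane version through $(d_t-1)$-fold exteriors, apply it to the inverse family $\{\A_t^{-1}\}$ using Remark \ref{rem: inverse}, and splice the two paths with Lemma \ref{lem: connect} exactly as in Theorem \ref{thm: 1-transversality}. Your bookkeeping remark about taking all tolerances uniformly over $1\le t\le\k$ is precisely the point the paper handles by writing $m=\max_t m_t(\cdot)$, and your explicit $\ep_1=\eta/(4D)$ with $D$ a $\rho$-norm bound on $(\BB_{p,y}^{(t)})^{-1}$ is the quantitative version of the paper's ``since the length of $B_2$ is uniformly bounded above by $N$, we can choose $\ep_1>0$''.
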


The proof of this theorem is a direct generalization of Theorem \ref{thm: 1-transversality}. Indeed, the only difference is that Lemma \ref{lem: simult turn} replaces the role of Lemma \ref{lem: powers of P}. Hence, we will only briefly sketch the proof below. 

The following lemma which generalizes Lemma \ref{lem: 1} constructs a path from $x$ to $p$ of bounded length which simultaneously turns the given directions $\w_t$ close to the attracting fixed point $\v_1^{(t)}$ of $\PP_t$.

\begin{lem}\label{lem: simult transverse}
For any $\ep>0$ and $k\in \N$, there exists $N \in \N$ such that the following holds: for any $x \in \Sig$ and $\w_1 \in \P(\V_1),\ldots, \w_\kappa \in \P(\V_\kappa)$, there exists a path $x \xrightarrow{H^u} x_0 \xrightarrow{\s^{n(x)}} x_1:=\s^{n(x)}x_0 \xrightarrow{H^s}p$
denoted by $B_{x,p}$ such that its length $n(x)$ is at most $N$, the distance $d(x_1,p)$ is at most $2^{-k}$, and that
$$\rho(\BB_{x,p}^{(t)}\w_t,\v_1^{(t)})\leq \ep$$
for all $1\leq t \leq \k$.
\end{lem}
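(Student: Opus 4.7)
The plan is to follow the proof of Lemma \ref{lem: 1} almost verbatim, with Lemma \ref{lem: simult turn} replacing Lemma \ref{lem: powers of P} at the step where one rotates the given direction close to some eigendirection of $P$. Fix $\delta := \min_{1\leq t \leq \kappa} \delta_t$, and let $m = m(\delta/2) \in \N$ be chosen large enough that Lemma \ref{lem: rectangle} applies simultaneously to all $\kappa$ cocycles $\A_t$ (this is possible since we are taking a maximum over a finite list). By the mixing property of $(\Sig,\s)$, there is $M \in \N$ (depending only on the base and cocycles, after increasing it to accommodate $m$) such that for any $x\in \Sig$ we may choose $w_0 \in \Wloc^u(x)$ with $w_1 := \s^M w_0 \in \Wloc^s(p)$ and $d(w_1,p) \leq 2^{-m}$.

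Next, for each $t \in \{1,\ldots,\kappa\}$, set
$$\u_t := \HH^{s,(t)}_{w_1,p}\,\AA_t^{M}(w_0)\,\HH^{u,(t)}_{x,w_0}\,\w_t \in \P(\V_t),$$
and apply Lemma \ref{lem: simult turn} with parameter $\delta/2$ to the tuple $(\u_1,\ldots,\u_\kappa)$. This produces a \emph{common} integer $a \in [0,N']$, with $N' := N(\delta/2)$, and indices $i(t) \in \{1,\ldots,d_t\}$ such that
$$\PP_t^a \u_t = \HH^{s,(t)}_{\s^a w_1,p}\,\AA_t^{M+a}(w_0)\,\HH^{u,(t)}_{x,w_0}\,\w_t \in \CC(\v^{(t)}_{i(t)},\delta/2)$$
simultaneously for every $t$. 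The fact that a single $a$ works for all $\kappa$ cocycles is the essential point Lemma \ref{lem: simult turn} provides, and is the only substantive change over Lemma \ref{lem: 1}.

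Now choose $\ell \in \N$ large enough so that $\ell \geq \ell(\ep)$ from \eqref{eq: 2} and $d(\s^\ell z, p) \leq 2^{-k}$; such $\ell$ depends only on $\ep$, $k$, and the cocycles. Set $\wt{x}_0 := [\s^a w_1, z] \in \Wloc^u(\s^a w_1) \cap \Wloc^s(z)$ and $x_0 := \s^{-M-a}\wt{x}_0 \in \Wloc^u(x)$. Using Lemma \ref{lem: connect} together with the identity \eqref{eq: psi ell} applied to each $\A_t$, the path $x \xrightarrow{H^u} x_0 \xrightarrow{\s^{n(x)}} x_1 \xrightarrow{H^s} p$ of length $n(x) := M + a + \ell$, where $x_1 = \s^{n(x)}x_0 = \s^\ell z$, satisfies
$$\BB_{x,p}^{(t)}\,\w_t \;=\; \PP_t^{\ell}\,\Psi^{(t)}\,\RR_{\wt{x}_0}^{(t)}\,\PP_t^{a}\,\u_t$$
for every $t$. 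Since the edge of $R_{\wt{x}_0}$ joining $\s^a w_1$ and $p$ has length at most $2^{-m}$, Lemma \ref{lem: rectangle} guarantees that $\RR_{\wt{x}_0}^{(t)}$ does not move any direction by more than $\delta/2$ in angle for any $t$; thus $\RR_{\wt{x}_0}^{(t)}\PP_t^a \u_t \in \CC(\v^{(t)}_{i(t)},\delta)$, and then \eqref{eq: 2} yields $\BB_{x,p}^{(t)}\w_t \in \CC(\v_1^{(t)},\ep)$ for all $t$. Noting $d(x_1,p) = d(\s^\ell z,p) \leq 2^{-k}$ and $n(x) \leq N := M + N' + \ell$ completes the construction.

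The only potential obstacle, the simultaneous pinching step, is already handled by Lemma \ref{lem: simult turn}; all remaining ingredients (mixing, rectangle estimates, the choice of $\ell$) are applied uniformly across the finitely many cocycles $\A_t$ by taking worst-case constants. Hence the proof is a straightforward multi-cocycle extension of Lemma \ref{lem: 1}.
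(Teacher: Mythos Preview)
Your proof is correct and follows essentially the same approach as the paper's own proof sketch: both adapt Lemma \ref{lem: 1} by replacing Lemma \ref{lem: powers of P} with Lemma \ref{lem: simult turn} and taking worst-case constants over the finitely many cocycles. One minor slip: $x_1 = \s^{n(x)}x_0$ is not literally equal to $\s^\ell z$ (they differ on coordinates with index $< -\ell$), but they lie on the same local stable set of $p$ and satisfy $d(x_1,p)=d(\s^\ell z,p)$, which is all that is needed.
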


\begin{proof}[Proof Sketch]
Using $\d>0$ defined as in \eqref{eq: 2}, we set $m:=\max\limits_{1\leq t \leq \k} m_t(\d/2)$ where $m_t(\d/2)$ is defined as in Lemma \ref{lem: rectangle} with respect to $\A_t$.

As in Lemma \ref{lem: 1} (ie, using the mixing rate $M \in \N$ of $\Sig$), we begin by choosing a path $H^s_{w_1,p}\AA^{M}(w_0)H^u_{x,w_0}$ such that $d(w_1,p) \leq 2^{-m}$, and set 
\begin{equation}\label{eq: u_t}
\u_t:=\HH^s_{w_1,p}\AA_t^{M}(x_0)\HH^u_{x,w_0}\w_t.
\end{equation}
Applying Lemma \ref{lem: simult turn} to $\u_1,\ldots,\u_\k$ gives $a \in [0,N']$ where $N':=N(\d/2)$ such that for every $1 \leq t \leq \k$ we have $\PP_t^a \u_t \in \bigcup\limits_{i=1}^{d_t}\CC(\v_i^{(t)},\d/2)$.

We then set $\ell:=\ell(\ep)$ as in \eqref{eq: 2}. By increasing $\ell$ if necessary, we may assume that $d(\s^\ell z,p) \leq 2^{-k}$. Proceeding as in Lemma \ref{lem: 1}, the path $B_{x,p}$ obtained by concatenating $H^s_{\s^a w_1,p}\AA^{M+a}(w_0)H^u_{x,w_0}$ and $H^s_{\s^\ell z,p}\A^\ell(z)H^u_{p,z}$. For such path, we have
\begin{equation}\label{eq: B_x,p 1}
\BB_{x,p}^{(t)}\w_t = \PP_t^{\ell}\Psi^{(t)}\RR_{\wt{x}_0}^{(t)}\PP_t^a \u_t,
\end{equation} which then satisfies $$
\rho(\BB_{x,p}^{(t)}\w_t,\v_1^{(t)}) \leq \ep
$$
for every $1 \leq t\leq \k$. Then setting $N:=M+N'+\ell$ completes the proof.
\end{proof}

Briefly summarizing the construction of Lemma \ref{lem: simult transverse} above, we first built a path from $x$ to $p$ using the mixing property of $(\Sig,\s)$. Under $\PP_t^a$ for some common $a\in \N$, the corresponding directions $\u_t$ were brought close to one of the fixed points $\v_i^{(t)}$ of $\PP_t$. Then by tracing the orbit of $z$, the resulting directions $\PP_t^a\u_t$ were twisted away from $\ww_i^{(t)}$ by the holonomy loop $\Psi^{(t)}$ and mapped close to $\v_1^{(t)}$ by further applying the iterates of $\PP_t$. As a result, the initial directions $\w_t$ were simultaneously turned close to $\v_1^{(t)}$.

The same construction can be applied for hyperplanes.
The following lemma is an extension of Lemma \ref{lem: 2} which constructs a path of uniformly bounded length that simultaneously turns given hyperplanes close to $\ww_1^{(t)}$. As the extension is analogous the above lemma, we omit the proof.

\begin{lem}\label{lem: hyperplane turning}
For any $\ep>0$ and $k \in \N$, there exists $N\in \N$ such that for any $x\in \Sig$ and any hyperplanes $\U_1 \subset \V_1,\ldots,\U_\k \subset \V_\k$, there exists a path $B_{x,p}$ given by $x \xrightarrow{H^u} x_0 \xrightarrow{\s^{n(x)}} x_1 \xrightarrow{H^s}p$ such that its length $n(x)$ is at most $N$, the distance $d(x_1,p)$ is at most $2^{-k}$, and that
$$\rho(\BB_{x,p}^{(t)}\UU_t,\ww_{d_t}^{(t)}) \leq \ep$$
for every $1\leq t\leq \k$.
\end{lem}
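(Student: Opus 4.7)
The plan is to deduce Lemma \ref{lem: hyperplane turning} from Lemma \ref{lem: simult transverse} by passing to the exterior power cocycles. For each $1 \leq t \leq \k$, I would introduce the exterior power cocycle $\A_t^{\wedge d_t - 1} \colon \Sig \to \text{GL}(\V_t^{\wedge d_t - 1})$. The key observation is that each $\A_t^{\wedge d_t - 1}$ is again 1-typical with respect to the \emph{same} typical pair $(p, z)$, so the collection $\{\A_t^{\wedge d_t - 1}\}_{1 \leq t \leq \k}$ falls into the general setting in which Lemma \ref{lem: simult transverse} applies. This reduction was essentially carried out in the proof of Lemma \ref{lem: 2}: the pinching of $P_t^{\wedge d_t - 1}$ follows from that of $P_t$ (its eigenvalues are the products $\prod_{j \neq i}\eig_j(P_t)$, which still have distinct moduli), while the twisting for the induced holonomy loop $\psi_z^{(t),\wedge d_t - 1}$ reduces, by wedging each side of the expansion $\psi_z^{(t),\wedge d_t - 1} u_i^{(t)} = \sum_j c_{i,j} u_j^{(t)}$ with the appropriate $v_j^{(t)}$, to the original twisting of $\psi_z^{(t)}$.

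Next I would exploit the Plücker embedding of the Grassmannian of hyperplanes in $\V_t$ into $\P(\V_t^{\wedge d_t - 1})$: each hyperplane $\U_t = \text{span}(u_1, \ldots, u_{d_t - 1})$ corresponds to the decomposable direction $\w_t^{\ast} := [u_1 \wedge \ldots \wedge u_{d_t - 1}]$, and in particular the distinguished hyperplane $\WW_{d_t}^{(t)} = \text{span}(v_1^{(t)}, \ldots, v_{d_t - 1}^{(t)})$ maps to $[v_1^{(t)} \wedge \ldots \wedge v_{d_t - 1}^{(t)}]$, which is precisely the attracting eigendirection of $P_t^{\wedge d_t - 1}$. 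Moreover, the natural action of $g \in \text{GL}(\V_t)$ on hyperplanes transports via Plücker to the action of $g^{\wedge d_t - 1}$ on the corresponding decomposable classes, and the embedding is bi-Lipschitz on decomposable classes with respect to the ambient angular metric $\rho$, with uniform constants depending only on $d_t$.

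Given $\ep > 0$ and $k \in \N$, I would then apply Lemma \ref{lem: simult transverse} to the collection $\{\A_t^{\wedge d_t - 1}\}_{t=1}^{\k}$ with input directions $\w_1^{\ast}, \ldots, \w_\k^{\ast}$ and a threshold $\ep' > 0$ chosen small enough (depending on $\ep$ and the bi-Lipschitz constants above). This produces a path $B_{x,p}$ of uniformly bounded length satisfying $d(x_1, p) \leq 2^{-k}$, such that the image of each $\w_t^{\ast}$ under the exterior power of the path cocycle lies within $\rho$-distance $\ep'$ of $[v_1^{(t)} \wedge \ldots \wedge v_{d_t - 1}^{(t)}]$. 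Pulling back through the Plücker embedding yields the desired bound $\rho(\BB_{x,p}^{(t)} \UU_t, \ww_{d_t}^{(t)}) \leq \ep$ simultaneously for all $t$.

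I do not anticipate a serious obstacle beyond the bookkeeping above. The only conceptual subtlety is verifying that the common typical pair $(p, z)$ is preserved under the passage to the exterior powers of each $\A_t$, for which the author has already supplied the template in both Definition \ref{defn: typical} and the proof of Lemma \ref{lem: 2}; the remaining comparison between the angular metric on hyperplanes and the $\rho$-metric on decomposable Plücker classes is a standard fact with uniform constants.
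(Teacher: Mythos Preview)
Your approach is correct and is exactly the route the paper intends: it omits the proof, pointing to the analogy with Lemma~\ref{lem: simult transverse} via the template of Lemma~\ref{lem: 2}, which is precisely the Pl\"ucker reduction to the $(d_t-1)$-fold exterior power cocycles that you carry out. One small caveat: the verification you borrow from Lemma~\ref{lem: 2} (that all coefficients $c_{i,j}$ are nonzero) only establishes the weak twisting condition \eqref{eq: delta}/\eqref{eq: 2}, not the full twisting of Definition~\ref{defn: typical}, so rather than invoking Lemma~\ref{lem: simult transverse} as a black box on $\{\A_t^{\wedge d_t-1}\}$ you should say that its \emph{proof} goes through, since that proof only uses pinching together with \eqref{eq: 2}.
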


We are now ready to begin the proof of Theorem \ref{thm: simult transverse}. Since it resembles the proof of Theorem \ref{thm: 1-transversality} closely, we will again only provide a brief sketch.

\begin{proof}[Proof sketch of Theorem \ref{thm: simult transverse}]
Let $x,y \in \Sig$, directions $\w_1 \in \P(\V_1),\ldots,\w_\kappa \in \P(\V_\kappa)$, and hyperplanes $\U_1\subset \V_1,\ldots, \U_\k \subset \V_\k$ be given.
Let
$$\eta:=\min_{1\leq t \leq \k} \rho(\v_{1}^{(t)},\ww_1^{(t)})>0.$$
Applying Lemma \ref{lem: simult transverse} with $\ep=\eta/4$ and $k=\max\limits_{1 \leq t \leq \k} m_t(\eta/4)$ gives $N \in \N $ and a path $B_1$ from $x$ to $p$ of length $n(x) \leq N$ satisfying $\rho(\BB_{1}^{(t)}\w_t,\v_1^{(t)})\leq \ep$ for all $1\leq t \leq \k$.

Likewise, Lemma \ref{lem: hyperplane turning} applied to the same constants $\ep$ and $k$ with respect to the inverse cocycles $\{\A^{-1}_t\}_{1\leq i \leq k}$ gives a path $B_2^{-}$ from $y$ to $p$ of length $n(y) \leq N$ satisfying $\rho(\BB_2^{(t),-}\UU_t,\ww_1^{(t)})<\ep$ for all $t$. We denote by $B_2$ the path $p \xrightarrow{H^{u}} y_1 \xrightarrow{\s^{n(y)}} y_0 \xrightarrow{H^{s}} y$ obtained by reversing the path $B_2^{-}$.

Setting $r:=\s^{-n(x)}[x_1,y_1]$ and $\wt{r}:=\s^{n(x)+n(y)}r$, Lemma \ref{lem: connect} allows to concatenate $B_1$ and $B_2$ and obtain a new path $B$ from $x$ to $y$. Note that the cocycle $\A_t$ over $B$ is given by 
$$B^{(t)}:=H^s_{\wt{r},y}\A_t^{n(x)+n(y)}(r)H^u_{x,r}=B_2^{(t)} R^{(t)}_{\s^{n(x)}r}B^{(t)}_1.$$
The same argument as in the proof of Theorem \ref{thm: model thm} shows $\rho(\RR^{(t)}_{\s^{n(x)}r}\BB_1^{(t)}\w_t,(\BB_{2}^{(t)})^{-1}\UU_t) \geq \eta/4$. Since the length of $B_2$ is bounded above by $N$, this then implies the existence of $\ep_1>0$ such that $\rho(\BB^{(t)}\w_t,\UU_t)>\ep_1$ for all $1\leq t\leq \k$. 
This completes the proof of Theorem \ref{thm: simult transverse} by setting $N_1 :=2N$.
\end{proof}

\subsection{Proof sketch of Theorem \ref{thm: construct proximal general}}
Instead of proving Theorem \ref{thm: construct proximal general}, we will prove the following theorem which generalizes Theorem \ref{thm: model thm}. By setting $\k := d-1$ and $\A_t := \A^{\wedge t}$, we can easily see that this more general theorem implies Theorem \ref{thm: construct proximal general}.

\begin{thm}\label{thm: proximal construction 2}
In the general setting described above, there exists $\tau_0>0$ 
such that for any $\tau \in (0,\tau_0)$ there exists $k = k(\tau)\in \N$ with the following property: for any $x \in \Sig$ and $n \in \N$ there exists a periodic point $q \in\Sig$ of period $n_q \in [n,n+k]$ such that 
\begin{enumerate}
\item $\A_t^{n_q}(q)$ is $\tau$-proximal for every $1 \leq t \leq \k$, and
\item there exists $j\in \N$ such that $\s^j q \in [x]_n$.
\end{enumerate}
\end{thm}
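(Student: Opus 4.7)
My plan is to carry out the construction in the proof of Theorem \ref{thm: model thm} (Subsection \ref{subsec: pf thm 3.1}) essentially verbatim, but with each step executed simultaneously across $t \in \{1,\ldots,\k\}$. The decisive enabler is the common typical pair $(p,z)$, which lets me choose a single sequence of symbolic data (the shadowing path in $\Sig$, the common turning exponent, the number of tracings of the holonomy loop) that controls all $\k$ cocycles at once.

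First I would fix the simultaneous analogues of the constants used in Subsection \ref{subsec: pf thm 3.1}: namely $(\ep_1, N_1)$ from the simultaneous transversality Theorem \ref{thm: simult transverse}, $c := c(\ep_1/2)$ from Proposition \ref{prop: AMS}, $m := \max_{t} m_t(\d/3)$ from Lemma \ref{lem: rectangle}, $N_2 := N(\d/3)$ from the simultaneous turning Lemma \ref{lem: simult turn}, and a uniform upper bound $D$ on the $\rho$-norms of the matrices, holonomies, and rectangles of bounded length appearing for every $\A_t$ (possible since each cocycle has compact image and there are only finitely many $t$). Given $x \in \Sig$ and $n \in \N$, I would produce $y$ with $\wt{y} := \s^{n(x)}y \in \Wloc^s(p)$, $d(\wt{y},p) \leq 2^{-m}$, and $|n(x)-n|$ uniformly bounded, just as before. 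Setting $g_t := H^{s,(t)}_{\wt{y},p}\,\A_t^{n(x)}(y)$, Proposition \ref{prop: AMS} provides hyperplanes $\U_{g_t} \subset \V_t$, one per cocycle.

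The first key step is to apply Theorem \ref{thm: simult transverse} at the points $p,y$, directions $\v_1^{(t)}$, and hyperplanes $\U_{g_t}$ to obtain a \emph{single} path $B$ of length $\leq N_1$ satisfying $\rho(\BB^{(t)}\v_1^{(t)}, \UU_{g_t}) \geq \ep_1$ for every $t$. For $\tau$ sufficiently small in terms of $\ep_1, c, \d, D$, the cone inclusion $\gsf_t\BB^{(t)}\CC(\v_1^{(t)},\tau) \subseteq \CC(\gsf_t\BB^{(t)}\v_1^{(t)}, cD\tau)$ follows for each $t$ from Proposition \ref{prop: AMS}, while Lemma \ref{lem: simult turn} applied to $\{\gsf_t\BB^{(t)}\v_1^{(t)}\}_{t=1}^{\k}$ yields a \emph{common} exponent $a \in [0, N_2]$ with $\PP_t^a\gsf_t\BB^{(t)}\v_1^{(t)} \in \CC(\v_{i_t}^{(t)}, \d/3)$ for some $i_t$. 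Combining these, the path $\wt{\gsf}_t := \PP_t^a\gsf_t\BB^{(t)}$ sends $\CC(\v_1^{(t)},\tau)$ into $\CC(\v_{i_t}^{(t)}, 2\d/3)$ for every $t$, simultaneously.

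The remaining steps follow Subsection \ref{subsec: pf thm 3.1} line by line. Using Lemma \ref{lem: connect}, I concatenate $\wt{g}$ with $\ell$ further tracings of $\psi$ to form $\wt{B} := P^\ell\psi\,R_{\s^{n_{\wt{g}}}w}\,\wt{g}$, a loop at $p$. The simultaneous analogue of Lemma \ref{lem: ell}, which is immediate from \eqref{eq: 2} together with the above choices, shows that for $\ell$ sufficiently large (depending only on $\tau$), $\wt{\BB}^{(t)}$ maps $\CC(\v_1^{(t)}, \tau')$ strictly into $\CC(\v_1^{(t)}, \tau/D)$ with $\rho$-norm $< \xi(\tau)/D^2$ on this cone, for every $t$ at once. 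Taking the periodic point $q$ of period $n_q = n_{\wt{g}} + \ell$ that repeats the first $n_q$ symbols of $w$, the decomposition $\A_t^{n_q}(q) = H_1^{(t)}\wt{B}^{(t)} H_2^{(t)}$ from Lemma \ref{lem: q-expansion}, combined with Lemma \ref{lem: rectangle} and Proposition \ref{prop: tits}, yields that each $\A_t^{n_q}(q)$ is $\tau$-proximal. Shadowing of the forward orbit of $x$ up to time $n$ and the bound $n_q \in [n, n+k(\tau)]$ are inherited from the construction. The only potential obstacle --- coordinating the transversality direction and the pinching exponent across all $t$ --- is already fully resolved by Theorem \ref{thm: simult transverse} and Lemma \ref{lem: simult turn}; once these simultaneous ingredients are in place, the proof reduces to bookkeeping of the argument of Theorem \ref{thm: model thm}.
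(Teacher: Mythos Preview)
Your proposal is correct and follows essentially the same approach as the paper's own proof: both observe that the only substantive changes from the proof of Theorem \ref{thm: model thm} are the replacement of Theorem \ref{thm: 1-transversality} by the simultaneous transversality Theorem \ref{thm: simult transverse} and of Lemma \ref{lem: powers of P} by the simultaneous turning Lemma \ref{lem: simult turn}, after which the construction of $\wt{g}$, $\wt{B}$, and $q$ proceeds identically with constants taken as maxima over $t$. The paper's proof is in fact only a sketch along exactly these lines, and your write-up supplies somewhat more detail than the paper does.
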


Recall that the main ingredients of Theorem \ref{thm: model thm} were the uniform transversality (Theorem \ref{thm: 1-transversality}), the control on the $\rho$-norm (Proposition \ref{prop: AMS}), and the control on various directions using Lemma \ref{lem: powers of P}.
As we can imagine, Theorem \ref{thm: simult transverse} and Lemma \ref{lem: simult turn} will respectively replace the role of Theorem \ref{thm: 1-transversality} and Lemma \ref{lem: powers of P}. In fact, these are essentially the only replacements needed in the proof, and hence, we will only provide a brief sketch.

\begin{proof}[Proof sketch of Theorem \ref{thm: proximal construction 2}]
Let $x\in \Sig$ and $n\in \N$ be given. Proceeding as in Subsection \ref{subsec: pf thm 3.1}, we obtain $y \in \s^{-n}\Wloc^u(\s^n x)$ and $n(x) \in \N$ such that  $|n(x) - n|$ is uniformly bounded and that $\wt{y}:=\s^{n(x)}y$ belongs to $\Wloc^s(p)$ and satisfies $d(\wt{y},p)\leq 2^{-m}$ where $m=\max\limits_{1\leq t \leq \k}m_t(\d/3)$. 

We set $g_t:=H^s_{\wt{y},p}\A^{n(x)}_t(y)$ and $\U_{g_t}$ be the corresponding hyperplane in $\V_t$ from Proposition \ref{prop: AMS}. Letting $\ep_1>0$ and $N_1 \in \N$ be the constants from Theorem \ref{thm: simult transverse}, we apply it to $p,y\in \Sig$, $\v_1^{(1)},\ldots,\v_1^{(\k)}$, and $\U_{g_1},\ldots ,\U_{g_\k}$ and obtain a path $B$ from $p$ to $y$ of length $n(y) \in [0, N_1]$ such that 
$$\rho(\BB^{(t)}\v_1^{(t)},\UU_{g_t}) \geq \ep_1$$
for all $1\leq t\leq \k$.

Applying Lemma \ref{lem: simult turn} to $\u_t = \gsf_t\BB^{(t)}\v_1^{(t)}$ gives $a\in [0,N(\d/3)]$ such that
$\PP_t^{a}\u_t \in\bigcup\limits_{i=1}^{d_t} \CC(\v_i^{(t)},\d/3)$  for every $t$. 
Proceeding as in the proof of Theorem \ref{thm: model thm}, there exists $\tau'>0$ such that the path $\wt{B}$ along $p \xrightarrow{H^u} w \xrightarrow{\s^{n_{\wt{g}}+\ell}} \wt{w} \xrightarrow{H^s}p$ constructed as in \eqref{eq: wtB} satisfies the analogous property as Lemma \ref{lem: ell}: for any $\tau_1 \leq \tau'$, $\tau_2>0$, and $\xi>0$, there exists $\ell_1\in \N$ such that for all $\ell \geq \ell_1$,
$$\wt{\BB}^{(t)}\CC(\v_1^{(t)},\tau_1)\subseteq	\CC(\v_1^{(t)},\tau_2) \text{ and }\|\wt{\BB}^{(t)} \mid_{\CC(\v_1^{(t)},\tau_1)}\|_\rho <\xi$$
for all $t$. 

The rest of the proof is essentially identical to the proof of Theorem \ref{thm: model thm}.
Using this property of $\wt{\BB}^{(t)}$, for every $\tau>0$ sufficiently small we can choose $\ell\in \N$ accordingly and construct the periodic point $q := q_\ell\in \Sig$ of period $n_q$ such that $\A_t^{n_q}(q)$ satisfies the listed properties in Theorem \ref{thm: proximal construction 2} for all $t$. 
\end{proof}

\section{Proof of main theorems}\label{sec: proof other thms}

\subsection{Proof of Theorem \ref{thm: simultaneous}} Recalling from \eqref{eq: notation for sing and eigen} that $\chi_1(g)=\log |\text{eig}_1(g)|$ is the logarithm of the spectral radius of $g$, we begin by relating Theorem \ref{thm: model thm} to Theorem \ref{thm: simultaneous} via Proposition \ref{prop: property of proximal map}.

\begin{prop}\label{prop: norm and eigen} Suppose $\A \colon \Sig \to \glr$ is a fiber-bunched cocycle with the following property: there exist $\ep>0$ and $k\in \N$ such that for any $x\in \Sig$ and $n\in \N$ there exists a periodic point $q \in\Sig$ of period $n_q \in [n,n+k]$ such that 
\begin{enumerate}
\item $\A^{n_q}(q)$ is $\ep$-proximal, and
\item there exists $j\in \N$ such that $\s^jq \in [x]_n$.
\end{enumerate}
Then there exists a constant $C>0$ depending only on $\A$, $\ep$, and $k$ such that 
$$\Big|\log\|\A^n(x)\| - \chi_1(\A^{n_q}(q)) \Big| \leq C.$$
\end{prop}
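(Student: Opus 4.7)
The plan is to apply Proposition \ref{prop: property of proximal map} to handle the "proximal" part and use bounded distortion from fiber-bunching to handle the "shadowing" part. By the triangle inequality,
\[\bigl|\log\|\A^n(x)\| - \chi_1(\A^{n_q}(q))\bigr| \leq \bigl|\log\|\A^n(x)\| - \log\|\A^{n_q}(q)\|\bigr| + \bigl|\log\|\A^{n_q}(q)\| - \chi_1(\A^{n_q}(q))\bigr|.\]
The second summand is immediately bounded by $C(\ep)$ from Proposition \ref{prop: property of proximal map}, since $\A^{n_q}(q)$ is $\ep$-proximal. It therefore suffices to control the first summand by a constant depending only on $\A$ and $k$.

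To do so, I would use $q' := \s^j q \in [x]_n$ as an intermediate point and proceed in three stages. First, the bounded distortion property of fiber-bunched cocycles, a direct consequence of identity \eqref{eq: same cylinder} together with the H\"older continuity of the canonical holonomies, gives $\bigl|\log\|\A^n(x)\| - \log\|\A^n(q')\|\bigr| \leq C_1$ with $C_1 = C_1(\A)$. Second, writing $\A^{n_q}(q') = \A^{n_q-n}(\s^n q')\A^n(q')$ and using $n_q - n \in [0,k]$ together with the compactness of the image of $\A$ in $\glr$, sub-multiplicativity of the norm and conorm yields $\bigl|\log\|\A^n(q')\| - \log\|\A^{n_q}(q')\|\bigr| \leq k \log M$, where $M := \sup_{x \in \Sig}\max(\|\A(x)\|,\|\A(x)^{-1}\|)$ is finite.

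The final stage is to compare $\log\|\A^{n_q}(q')\|$ with $\log\|\A^{n_q}(q)\|$. In the principal case $j = 0$, which is the case arising from the constructions in Theorems \ref{thm: model thm} and \ref{thm: construct proximal general}, we have $q' = q$ and this step is trivial. In general, the cocycle equation and periodicity of $q$ give $\A^{n_q}(q) = AB$ and $\A^{n_q}(q') = BA$ with $A := \A^{n_q-j}(\s^j q)$ and $B := \A^j(q)$; these are conjugate and share the same spectral radius $e^{\chi_1(\A^{n_q}(q))}$. Combining the trivial inequality $\chi_1(\cdot) \leq \log\|\cdot\|$ with the $\ep$-proximality of $AB$ yields one direction, $\log\|\A^{n_q}(q)\| \leq \log\|\A^{n_q}(q')\| + C(\ep)$; the reverse bound is the main technical point of the argument, and I expect to extract it from the identity $(BA)^m = B(AB)^{m-1}A$ together with the rank-one-like behavior of high powers of $\ep$-proximal maps (so that $(AB)^{m-1}$ behaves like $e^{(m-1)\chi_1}$ times a projection onto $v_g$ along $V_g^<$ up to constants controlled by $\ep$), letting $m \to \infty$ and invoking fiber-bunching to control the distortion induced by $B$. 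Assembling the three stages via the triangle inequality then gives the constant $C = C(\A,\ep,k)$ asserted in the proposition.
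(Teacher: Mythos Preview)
Your overall approach matches the paper's: split via the triangle inequality, bound the second summand by Proposition~\ref{prop: property of proximal map}, and the first by bounded distortion together with compactness of the image of $\A$. The difference lies in how you organize the first summand, and your organization introduces a genuine gap.

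By routing through $\|\A^{n_q}(q')\|$ with $q'=\s^j q$, you are forced in stage~(c) to compare the norms of the conjugate matrices $\A^{n_q}(q)=AB$ and $\A^{n_q}(q')=BA$. Conjugation does not preserve $\ep$-proximality, and your $(BA)^m = B(AB)^{m-1}A$ trick yields only the trivial direction $\|BA\|\ge e^{\chi_1}$; it gives no upper bound on $\|BA\|$. In fact such an upper bound can fail by an unbounded factor even under fiber-bunching, since the conjugator $B=\A^j(q)$ satisfies only $\|B\|\cdot m(B)^{-1}\le 2^{\alpha j}$, which is not uniform in $j$. So stage~(c) as outlined does not close.

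The paper's brief argument sidesteps this by comparing $\|\A^n(q')\|$ \emph{directly} with $\|\A^{n_q}(q)\|$, not with $\|\A^{n_q}(q')\|$. The implicit point behind the bound $k\max\{\log M,-\log m\}+\log C'$ is that one may take $0\le j\le n_q-n\le k$: the periodic word of $q$ consists of the length-$n$ block matching $x$ and a complementary block of length $n_q-n\le k$, and in the constructions of Theorems~\ref{thm: model thm} and~\ref{thm: construct proximal general} the $x$-block does not wrap past position~$0$. With $j\le k$ one has the factorization
\[
\A^{n_q}(q)=\A^{n_q-n-j}(\s^{n+j}q)\cdot\A^n(q')\cdot\A^j(q),
\]
where the two outer factors have total length $n_q-n\le k$, giving $\bigl|\log\|\A^{n_q}(q)\|-\log\|\A^n(q')\|\bigr|\le k\max\{\log M,-\log m\}$ immediately. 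This replaces your stages~(b) and~(c) in a single step. Your remark that $j=0$ in the principal case is not quite accurate (in the constructions $j=n(y)$), but the relevant fact---that $j\le k$ there---is what makes the argument go through.
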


\begin{proof}
The triangle inequality gives
$$\Big|\log\|\A^n(x)\| - \chi_1(\A^{n_q}(q)) \Big| \leq \Big|\log\|\A^n(x)\| - \log\|\A^{n_q}(q)\| \Big|+ \Big|\log\|\A^{n_q}(q)\| - \chi_1(\A^{n_q}(q)) \Big|.$$
Due to (2) and the fact that $|n_q - n| \leq k$, the first term admits a following upper bound:
$$ \Big|\log\|\A^n(x)\|-\log\|\A^{n_q}(q)\|\Big| \leq k\cdot \max\big\{\log M,-\log m\big\}+\log C',$$
where $M:=\max\limits_{x \in \Sig}\|\A(x)\|$, $m = \min\limits_{x \in \Sig}\|\A(x)^{-1}\|^{-1}$, and $C'$ is the constant from the bounded distortion of $\A$. 

Since $\A^{n_q}(q)$ is $\ep$-proximal, it follows from Proposition \ref{prop: property of proximal map} that the second term is also bounded above by a uniform constant, completing the proof. 
\end{proof}

\begin{proof}[Proof of Theorem \ref{thm: simultaneous}]
The proof amounts to combining Theorem \ref{thm: construct proximal general} and Proposition \ref{prop: norm and eigen}.
Let $x \in \Sig$ and $n\in \N$ be given. Theorem \ref{thm: construct proximal general} gives $\ep>0$ (i.e., we fix some $\ep \in (0,\tau_0)$), $k = k(\ep)\in \N$, and a common periodic point $q\in \Sig$ of period
$n_q \in [n,n+k]$ such that $\s^jq \in [x]_n$ for some $j\in \N$ and that $$(\A^{\wedge t})^{n_q}(q) \text{ is }\ep\text{-proximal for all }t\in \{1,\ldots,d-1\}.$$

Applying the arguments of Proposition \ref{prop: norm and eigen} to each $\A^{\wedge t}$ gives $C_t>0$ such that 
\begin{equation}\label{eq: norm and eigen exterior}
\Big|\log\|(\A^{\wedge t})^n(x)\| - \chi_1((\A^{\wedge t})^{n_q}(q)) \Big| \leq C_t.
\end{equation}
Note also that this inequality is true when $t = d$ for some $C_d>0$ since $\A^{\wedge d}$ acts by scalar multiplication on a 1-dimensional vector space $(\R^d)^{\wedge d}$ and $|n_q-n|$ is bounded above by $k$. Also, we set $C_0 = 0$.

Note $\log\|(\A^{\wedge t})^n(x)\| = \mu_1((\A^{\wedge t})^n(x)) = \sum\limits_{i=1}^t \mu_i(\A^n(x))$, and likewise for $\chi_1$.
For every $1 \leq i \leq d$, the triangle inequality applied to \eqref{eq: norm and eigen exterior} for $t= i$ and $t=i-1$ gives 
$$\Big|\mu_i(\A^n(x)) - \chi_i(\A^{n_q}(q))\Big| \leq C_i+C_{i-1}.$$
This completes the proof by taking $C:= \sqrt{d} \cdot \max\limits_{1\leq i\leq d}( C_i+C_{i-1})$.
\end{proof}
\begin{rem}
We note that Remark \ref{rem: uniform constants} also applies to Theorem \ref{thm: simultaneous}; that is, the constants $C>0$ and $k\in \N$ may be chosen to work uniformly near $\A$, but the same is not true for the constructed periodic point $q \in \Sig$.
\end{rem}

\subsection{Dominated splitting and the proof of Theorem \ref{thm: main}}
Recalling that $m(g)=\|g^{-1}\|^{-1} = \a_d(g)$ is the conorm of $g \in \glr$, we begin by formally defining dominated cocycles.

\begin{defn}\label{defn: domination}
A continuous cocycle $\A \colon \Sig \to \glr$ is \emph{dominated} if there exist two $\A$-invariant continuous bundles $E$ and $F$ over $\Sig$ such that $E_x\oplus F_x = \R^d$ for every $x\in \Sig$ and there exist constants $C>0$ and $\tau \in (0,1)$ such that 
$$\frac{\|\A^n(x)\mid_{F_x}\|}{m(\A^n(x)\mid_{E_x})} \leq C\tau^n$$
for every $x\in \Sig$ and $n\in \N$. We say that $E$ \emph{dominates} $F$ and that $\A$ has the \emph{$i$-th dominated splitting} where $i$ is the dimension of the bundle $E$.
\end{defn}

Recalling from \eqref{eq: notation for sing and eigen} that $\mu_1(g) \geq \ldots \geq \mu_d(g)$ are the logarithm of the singular values of $g\in \glr$, the following result of Bochi and Gourmelon \cite{bochi2009some} provides a characterization for the $i$-th domination.
\begin{prop}\cite[Theorem A]{bochi2009some}\label{prop: dom split} Given a compact metric space $X$ and a linear cocycle $\A \colon X \to \glr$, $\A$ has $i$-th dominated splitting if and only if there exist $C_1,C_2>0$ such that 
$$(\mu_i-\mu_{i+1})(\A^{n}(x)) \geq C_1n-C_2$$
for all $x\in \Sig$ and $n\in \N$.
\end{prop}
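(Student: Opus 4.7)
The $(\Rightarrow)$ direction is a routine linear-algebra estimate. Given invariant bundles $E \oplus F = \R^d$ with $\dim E = i$ satisfying $\|\A^n(x)|_{F_x}\|/m(\A^n(x)|_{E_x}) \leq C\tau^n$, continuity of the bundles together with compactness of $X$ yields a uniform lower bound $\alpha > 0$ on the angle $\mangle(E_x, F_x)$. Standard estimates then give $\mu_i(\A^n(x)) \geq \log m(\A^n(x)|_{E_x}) - C'(\alpha)$ and $\mu_{i+1}(\A^n(x)) \leq \log\|\A^n(x)|_{F_x}\| + C'(\alpha)$; subtracting, setting $C_1 = \log(1/\tau) > 0$, and absorbing the constants into $C_2$ yields the asserted linear lower bound.

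For the converse, the plan is to reduce to the case $i=1$ via the exterior power cocycle $\A^{\wedge i}$ on $\Lambda^i\R^d$. The singular values of $(\A^{\wedge i})^n(x)$ are the products of $i$ distinct singular values of $\A^n(x)$, so the top two singular values of $(\A^{\wedge i})^n(x)$ differ by exactly $\mu_i(\A^n(x)) - \mu_{i+1}(\A^n(x))$. The hypothesis thus becomes a uniform top singular gap for $\A^{\wedge i}$, and if one produces an $\A^{\wedge i}$-invariant dominated line bundle whose fibers are decomposable $i$-vectors, then pulling back via the Plücker embedding $\mathrm{Gr}(i, \R^d) \hookrightarrow \mathbb{P}(\Lambda^i\R^d)$ delivers the invariant $i$-dimensional bundle $E$ for $\A$. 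Decomposability passes to the limit because it is a closed condition and holds at each finite step: the top right singular vector of $(\A^{\wedge i})^n(x)$ is literally the wedge of the top $i$ right singular vectors of $\A^n(x)$.

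For the $i=1$ case, I would construct $E(x) \in \mathbb{P}^{d-1}$ as the limit of $\tilde E_n(x)$, the top left singular direction of $\A^n(\s^{-n}x)$ viewed as a line at $x$; heuristically, this is the direction most stretched by running the cocycle from time $-n$ up to $0$. The singular value gap $\mu_1 - \mu_2 \geq C_1 n - C_2$ implies that prepending one more step of $\A$ perturbs this top direction by an angle of order $e^{-(C_1 n - C_2)}$, uniformly in $x$, so $(\tilde E_n(x))_n$ is exponentially Cauchy and its continuous limit $E(x)$ satisfies $\A(x) E(x) = E(\s x)$. The complementary bundle $F$ is built symmetrically as the analogous top bundle for the inverse cocycle (or dually, the limit of the least-expanded subspaces of $\A^n(x)$ in forward time); uniform transversality with a positive lower bound on $\mangle(E_x, F_x)$ follows from the SVD orthogonality of the top-right and bottom-left singular subspaces at each finite time, combined with uniform convergence. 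The domination inequality is then read off from the singular value gap and the angle bound.

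The main obstacle, and the technical heart of Bochi--Gourmelon's argument, is verifying that the Grassmannian contraction governing the Cauchy estimate is \emph{uniform} in $x$, so that the limits $E(x)$ and $F(x)$ are genuinely continuous and not merely pointwise. This requires a careful quantitative comparison between the singular value decompositions of $\A^n(x)$ and of $\A^{n+1}(x) = \A(\s^n x)\A^n(x)$: one must show that, under the standing hypothesis, the top-$i$ right singular subspace is stable under composition with a single factor $\A(\s^n x)$ up to an error shrinking exponentially in $n$, with constants depending only on $C_1, C_2$ and the uniform bounds on $\A$ and $\A^{-1}$ over the compact base $X$. Once this quantitative stability is in place, the remaining continuity, invariance, and transversality claims follow by standard arguments.
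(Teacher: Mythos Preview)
The paper does not prove this proposition at all: it is quoted verbatim as \cite[Theorem~A]{bochi2009some} and used as a black box in the proof of Theorem~B. There is therefore no ``paper's own proof'' to compare against; your sketch is an outline of Bochi--Gourmelon's original argument rather than of anything in the present paper.

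That said, your outline is broadly faithful to how Bochi and Gourmelon proceed. The reduction to $i=1$ via $\A^{\wedge i}$ and the Pl\"ucker embedding is exactly their strategy, and the decomposability of the limiting line is indeed handled by the closedness of the Grassmannian inside the projectivized exterior power. For the $i=1$ case your idea of tracking top singular directions and showing they form an exponentially Cauchy sequence is essentially correct, though Bochi--Gourmelon phrase it somewhat differently: they work with cone fields on the Grassmannian and show that the singular-value gap forces a uniform contraction of these cones, from which the invariant continuous sections $E$ and $F$ are extracted. Your final paragraph correctly identifies this uniform contraction as the crux, and your description of what must be proved (stability of the top singular subspace under one-step composition, with constants depending only on $C_1,C_2$ and $\sup\|\A^{\pm 1}\|$) is accurate, if not carried out. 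As a proof \emph{proposal} this is fine; as a proof it is of course incomplete, precisely at the point you flag.
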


We are now ready to prove Theorem \ref{thm: main}. In order to exploit the assumption \eqref{eq: assumption main} of Theorem \ref{thm: main}, we will use the fact \eqref{eq: Lyap periodic point} that for any periodic point $q \in \Sig$ of period $n_q\in \N$, its Lyapunov exponents are given by
$$\vec{\lambda}(q)=\frac{1}{n_q}  \vec{\chi}(\A^{n_q}(q)) = \frac{1}{n_q}  \big(\log|\eig_1(\A^{n_q}(q))|,\ldots,\log|\eig_d(\A^{n_q}(q))|\big).$$

\begin{proof}[Proof of Theorem \ref{thm: main}]
Let $c>0$ be the constant appearing in the assumption \eqref{eq: assumption main} from the statement of Theorem \ref{thm: main}.
 It suffices to verify Bochi-Gourmelon's criterion.
 
From Theorem \ref{thm: simultaneous}, there exists $C>0$ such that for any $x\in \Sig$ and $n\in \N$, there exists a periodic point $q \in \Sig$ of period $n_q \geq n$ such that
$$\Big\|\vec{\mu}(\A^n(x)) - \vec{\chi}(\A^{n_q}(q))\Big\| \leq C.$$
It then follows from \eqref{eq: Lyap periodic point} that
\begin{align*}
(\mu_i-\mu_{i+1})(\A^{n}(x)) & \geq n_q\cdot (\lambda_i-\lambda_{i+1})(\A^{n_q}(q))-2C \\
&\geq cn-2C
\end{align*}
where the second inequality is due to the assumption \eqref{eq: assumption main} and the fact that $n_q\geq n$. This verifies the assumptions in Proposition \ref{prop: dom split} and establishes the $i$-th dominated splitting.
\end{proof}

\begin{rem}
The conclusion of Theorem \ref{thm: main} is likely to be true with assumptions weaker than the typicality assumption. Our method, however, relies heavily on the typicality assumption, and a new method will have to be deployed in order to address more general class of fiber-bunched cocycles than typical cocycles.
\end{rem}

\subsection{Subadditive thermodynamic formalism and the proof of Theorem \ref{thm: subadditive}}\label{subsec: subadditive thermo}
For any cocycle $\A \colon \Sig \to \glr$ and $s \geq 0$, the \emph{s-singular value potential} $\Phi_\A^s := \{\log \vp_n^s(\cdot)\}_{n\in \N}$ is a sequence of continuous functions on $\Sig$ defined by 
$$\vp^s_n(x):=\vp^s(\A^n(x))$$
where $\vp^s$ is the \textit{singular value function} defined as
$$\vp^s(g) := \a_1(g) \ldots \a_{\lfloor s \rfloor}(g) \a_{\lfloor s \rfloor+1}(g)^{s - \lfloor s \rfloor}$$
for $s \in [0,d]$ and $\vp^s(g) := |\det(g)|^{s/d}$ for $s>d$.

Such potentials are \emph{subadditive}, and the theory of subadditive thermodynamic formalism applies. For instance, the \emph{subadditive variational principle} (see \cite{cao2008thermodynamic}) states that 
$$P(\Phi_\A^s) = \sup\limits_{\mu\in \M(\s)} \Big\{h_\mu(\s)+\lim\limits_{n\to \infty}\frac{1}{n}\int \log \vp_n^s(x)\,d\mu\Big\}.$$
Any invariant measures achieving the supremum are called the \emph{equilibrium states} of $\Phi_\A^s$. We note that the subadditive variational principle remains valid if the supremum is taken over all ergodic measures $\E(\s)$ instead of all invariant measures $\M(\s)$.

The author showed in \cite{park2020quasi} that for typical cocycles $\A$, the singular value potential $\Phi_\A^s$ has a unique equilibrium state $\mu_{\A,s}$ for all $s \geq 0$. Moreover, $\mu_{\A,s}$ has the \emph{subadditive Gibbs property}: there exists $C\geq 1$ such that for any $x\in \Sig$ and $n\in\N$, we have
$$C^{-1}\leq \frac{\mu_{\A,s}([x]_n)}{e^{-nP(\Phi_\A^s)}\vp_n^s(x)} \leq C.$$
Such results generalize analogous previous results for additive potentials \cite{bowen1974some} and irreducible locally constant cocycles \cite{feng2010equilibrium}.

Let $s = 1$ for now, and consider two typical cocycles $\A,\B$ and their unique equilibrium states $\mu_\A:=\mu_{\A,1}$ and $\mu_\B:=\mu_{\B,1}$. From the Gibbs property, for any $x\in \Sig$ and $n\in \N$ we have
$$\frac{\mu_{\A}([x]_{n})}{e^{-nP(\Phi_\A)}\|\A^{n}(x)\|} \asymp 1 \asymp \frac{\mu_{\B}([x]_{n})}{e^{-nP(\Phi_\B)}\|\B^{n}(x)\|}.$$
If we further suppose that $\mu_\A$ and $\mu_\B$ are the same, then for any $x\in \Sig$ whose top Lyapunov exponents $\lambda_1(\A,x)$ and $\lambda_1(\B,x)$ both exist, we have
$\lambda_1(\A,x) - \lambda_1(\B,x) = P(\Phi_B) - P(\Phi_\A).$
Since periodic points are Lyapunov regular, we have
$$\lambda_1(\A,p) - \lambda_1(\B,p) = P(\Phi_B) - P(\Phi_\A)$$
for every periodic point $p \in \Sig$. In the following proof of Theorem \ref{thm: subadditive}, we prove the converse of this statement when $B$ is a small perturbation of $\A$.

\begin{proof}[Proof of Theorem \ref{thm: subadditive}]
Let $\A \colon \Sig \to \glr$ be a 1-typical cocycle and $\B \colon \Sig \to \glr$ a small perturbation of $\A$ satisfying the assumption \eqref{eq: A,B}; that is, 
top Lyapunov exponents at every periodic point $p\in \Sig$ satisfy
$$\lambda_1(\A,p) - \lambda_1(\B,p) = c$$
for some $c\in \R$.

We will first show that this implies that for any ergodic measure $\mu\in \E(\s)$, the difference between its top Lyapunov exponents with respect to $\A$ and $\B$ is also equal to $c$.
This is because for any $\d>0$, we can choose $x \in \Sig$ (from a full $\mu$-measure set) and $n\in \N$ such that $$\max\Big\{\Big|\frac{1}{n}\log\|\A^n(x)\| - \lambda(\A,\mu)\Big|,\Big|\frac{1}{n}\log\|\B^n(x)\| - \lambda(\B,\mu)\Big|\Big\} \leq \d.$$
Since $\B$ is a small perturbation of $\A$, they both have a common typical pair. In particular, applying Theorem \ref{thm: proximal construction 2} to $\A_1 = \A$ and $\A_2 = \B$ gives $\ep>0$ (by fixing some $\ep \in (0,\tau_0)$), $k = k(\ep) \in \N$, and a periodic point $q \in \Sig$ such that both $\A^{n_q}(q)$ and $\B^{n_q}(q)$ are $\ep$-proximal and that the difference $|n_q-n|$ is bounded above by $k$. Following along the proof of Theorem \ref{thm: simultaneous} (i.e., applying Proposition \ref{prop: norm and eigen}) gives a uniform constant $C>0$ such that
$$\max\Big\{\Big|\log \|\A^n(x)\| - \log|\eig_1(\A^{n_q}(q))|\Big| ,\Big|\log \|\B^n(x)\| - \log|\eig_1(\B^{n_q}(q))|\Big| \Big\}\leq C.$$
Recalling that $\displaystyle\lambda_1(\A,q) = \frac{1}{n_q} \log|\eig_1(\A^{n_q}(q))|$ and likewise for $\lambda_1(\B,q)$, it follows that
$$\max\Big\{\Big|\lambda_1(\A,\mu)-\frac{n_q}{n}  \lambda_1(\A,q)\Big|,\Big|\lambda_1(\B,\mu)-\frac{n_q}{n}  \lambda_1(\B,q) \Big|\Big\}\leq \frac{C}{n}+\d.$$
By choosing $\d$ arbitrarily close to $0$ and choosing $x\in \Sig$ and $n\in \N$ accordingly so that $n \to \infty$, top Lyapunov exponents $\lambda_1(\A,q)$ and $\lambda_1(\B,q)$ of the common periodic point $q$ simultaneously approximate the Lyapunov exponents $\lambda_1(\A,\mu)$ and $\lambda_1(\B,\mu)$ of $\mu$.
Here, we have used the fact that $n_q\in [n,n+k]$ which implies that $n_q/n \to 1$ as $n \to \infty$. 
Then the assumption \eqref{eq: A,B} implies that
$$\lambda_1(\A,\mu)- \lambda_1(\B,\mu) =c$$ for all ergodic measures $\mu \in \E(\s)$.
The subadditive variational principle then implies that 
$$
P(\Phi_\A) = \sup\limits_{\mu \in \E(\s)} h_{\mu}(\s)+\lambda_1(\A,\mu)
=\sup\limits_{\mu \in \E(\s)} h_{\mu}(\s)+\lambda_1(\B,\mu)+c=P(\Phi_\B)+c$$
and that the unique equilibrium state $\mu_\A$ for $\Phi_\A$ is also an equilibrium state for $\Phi_\B$. From their uniqueness, $\mu_\A$ must coincide with $\mu_\B$.
\end{proof}

We end this subsection with a few comments on Theorem \ref{thm: subadditive} and its proof above. First, while Theorem \ref{thm: subadditive} is stated for norm potentials $\Phi_\A$ and the corresponding equilibrium states $\mu_\A$, analogous proof applies to singular value pontentials $\Phi_\A^s$ for any $s\geq 0$. In fact, if $\A,\B$ are sufficiently close typical cocycles and there exists $c\in \R$ such that $$\sum\limits_{i=1}^k \big(\lambda_i(\A,p) -\lambda_i(\B,p)\big) = c$$ for all periodic points $p\in \Sig$, then the unique equilibrium state $\mu_{\A,k}$ of $\Phi_\A^k$ coincides with $\mu_{\B,k}$. In particular, if there exists $c_i \in \R$ for each $1 \leq i \leq d$ such that
$$\lambda_i(\A,p) -\lambda_i(\B,p)=c_i$$
for all periodic points $p \in \Sig$ and $1 \leq i \leq d$, then $\mu_{\A,s}$ coincides with $\mu_{\B,s}$ for all $s\geq 0$.

Another point to note from the proof of Theorem \ref{thm: subadditive} is the simultaneous approximation of the Lyapunov exponents $\vec{\lambda}(\A,\mu)$ and $\vec{\lambda}(\B,\mu)$ by common periodic points. Kalinin \cite{kalinin2011livvsic} showed that for any \hol continuous cocycles $\A \colon \Sig \to \glr$ and any ergodic measure $\mu\in \E(\s)$, there exists a sequence of periodic points whose Lyapunov exponents approach the Lyapunov exponent $\vec{\lambda}(\A,\mu)$. Above proof of Theorem \ref{thm: subadditive} shows that for any two sufficiently close typical cocycles $\A$ and $\B$, we can choose a sequence of periodic points whose Lyapunov exponents with respect to both $\A$ and $\B$ simultaneously approach $\vec{\lambda}(\A,\mu)$ and $\vec{\lambda}(\B,\mu)$, respectively.

\subsection{Lyapunov spectrums and the proof of Theorem \ref{thm: spectrum}}\label{subsec: spectrum}
For any cocycle $\A \colon \Sig \to \glr$, there are various Lyapunov spectrums associated to it. The pointwise Lyapunov spectrum $\Omega_\A$ and the Lyapunov exponents over periodic points $\Omega_p$ defined in the introduction are examples of such spectrums. Moreover, we denote the set of Lyapunov exponents $\vec{\lambda}(\mu)$ over all ergodic measures $\mu\in \E(\s)$ by $\Omega_e$. Without any further assumptions on the cocycle, the only relations among them are
$$\Omega_p \subseteq \Omega_e \subseteq \Omega_\A.$$

Kalinin \cite{kalinin2011livvsic} showed that when $\A$ is \hol continuous, then $\Omega_e \subseteq \ol{\Omega}_p$. When $\A$ is fiber-bunched and typical, then the author \cite{park2020quasi} showed that $\Omega_\A$ is closed and convex (see also earlier result of Feng \cite{feng2009lyapunov} concerning top Lyapunov exponents of irreducible locally constant cocycles). In particular, this implies that $\ol{\Omega}_p$ is a subset of $\Omega_\A$ for typical cocycles. Theorem \ref{thm: spectrum} shows that $\ol{\Omega}_p$ in fact coincides with $\Omega_\A$ when $\A$ is typical.

\begin{proof}[Proof of Theorem \ref{thm: spectrum}]
The proof resembles that of Theorem \ref{thm: subadditive}.
Let $x\in \Sig$ be an arbitrary Lyapunov regular point. Recalling the notations from \eqref{eq: notation for sing and eigen}, for any $\d>0$ we can choose $n\in \N$ such that 
$$\Big\| \frac{1}{n}\vec{\mu}(\A^n(x))- \vec{\lambda}(x)\Big\| \leq \d.$$
Theorem \ref{thm: simultaneous} then gives constants $C>0$ and $k\in \N$, and a periodic point $q\in \Sig$ of period $n_q \in [n,n+k]$ such that
$$\Big\|\vec{\mu}(\A^n(x)) - \vec{\chi}(\A^{n_q}(q))\Big\| \leq C.
$$
It then follows that 
$$\Big\|\vec{\lambda}(x) - \frac{n_q}{n} \vec{\lambda}(q)\Big\| \leq \frac{C}{n}+\d.
$$

By choosing $\d$ arbitrarily close to $0$ and choosing $n\in \N$ accordingly so that $n \to \infty$, the Lyapunov exponent $\vec{\lambda}(q)$ of the periodic point $q \in \Sig$ limits to $\vec{\lambda}(x) \in \Omega_\A$.
\end{proof}

\bibliographystyle{amsalpha}
\bibliography{proximal}

\end{document}